\title{Distinguishing partitions of complete multipartite graphs}
\author{Michael Goff}
\newtheorem{theorem}{Theorem}[section]
\newtheorem{corollary}[theorem]{Corollary}
\newtheorem{lemma}[theorem]{Lemma}
\newtheorem{definition}[theorem]{Definition}
\def\proof{\smallskip\noindent {\it Proof: \ }}
\def\proofof#1{\smallskip\noindent {\it Proof of #1: \ }}
\def\endproof{\hfill$\square$\medskip}
\begin{document}

\begin{abstract}
A \textit{distinguishing partition} of a group $X$ with automorphism group $\mbox{aut}(X)$ is a partition of $X$ that is fixed by no nontrivial element of $\mbox{aut}(X)$.  In the event that $X$ is a complete multipartite graph with its automorphism group, the existence of a distinguishing partition is equivalent to the existence of an asymmetric hypergraph with prescribed edge sizes.  An asymptotic result is proven on the existence of a distinguishing partition when $X$ is a complete multipartite graph with $m_1$ parts of size $n_1$ and $m_2$ parts of size $n_2$ for small $n_1$, $m_2$ and large $m_1$, $n_2$.  A key tool in making the estimate is counting the number of trees of particular classes.
\end{abstract}

\date{November 25, 2012}

\maketitle

\section{Introduction}
\label{introsection}

The distinguishing partition problem asks, given a finite set $X$ with a group $\mbox{aut}(X)$ that acts on $X$, whether there exists a partition $P$ of the elements of $X$ such that no nontrivial element of $\mbox{aut}(X)$ fixes $P$.  Formally, consider a partition $P = \{P_1, \ldots, P_t\}$ and $\gamma \in \mbox{aut}(X)$.  For general $X' = \{x_1, \ldots, x_i\} \subset X$, let $\gamma(X') = \{\gamma(x_1),\ldots,\gamma(x_i)\}$.  Then let $\gamma(P) = \{\gamma(P_1),\ldots,\gamma(P_t)\}$.  We say that $P$ is a distinguishing partition if $\gamma(P) \neq P$ for all nontrivial $\gamma \in \mbox{aut}(X)$.

Not all sets $X$ with group action $\mbox{aut}(X)$ have a distinguishing partition.  For example, if $\mbox{aut}(X)$ is the group of all permutation on $X$ and $|X| \geq 2$, then $X$ does not have a distinguishing partition.  Conversely, if $\mbox{aut}(X)$ is the trivial group, then all partitions of $X$ are distinguishing.  In general, the conditions for the existence of a distinguishing partition can be quite complex, even in a relatively restricted setting such as taking $X$ to be a complete multipartite graph, acted upon by its automorphism group.  Informally, the difficulty is that if a partition $P$ consists of few large parts, then a nontrivial automorphism might fix each part, while if $P$ consists of many small parts, then a nontrivial automorphism might permute the parts.

Ellingham and Schroeder \cite{distpart} first considered the distinguishing partitions problem for complete equipartite graphs.  Their finding is that if $X$ is a complete equipartite graph with $m$ parts, each of size $n$, then $X$ has a distinguishing partition if and only if $m \geq f(n)$ for $f(2) = f(14) = 6$, $f(6) = 5$, and otherwise $f(n) = \lfloor \log_2(n+1) \rfloor + 2$.  In this setting, $\mbox{aut}(X)$ is the wreath product action $S_n\ \mbox{Wr}\ S_m$.

The distinguishing partition is a measure of the level of symmetry of a group action, and as such the concept is closely related to the well-studied distinguishing number, as introduced by Albertson and Collins \cite{albertsoncollins} on a graph and by Tymoczko \cite{tymoczko} for a general group action.  Other such measures are the cost of $2$-distinguishing \cite{cost2} and the determining set \cite{detset}.  The survey of Bailey and Cameron \cite{baileycameron} shows how these concepts have appeared independently in many different settings.

For the remainder of this paper, we will consider the case that $X$ is a complete multipartite graph with its automorphism group.  We denote by $X = K_{n_1, \ldots, n_m}$ the complete multipartite graph with maximal independent sets $X_i$ of size $n_i$ for $1 \leq i \leq m$.  Also, $K_{m_1(n_1), m_2(n_2)}$ denotes the complete multipartite graph with $m_i$ parts each of size $n_i$  for $i = 1,2$.  We focus in particular on $K_{m_1(n_1),m_2(n_2)}$ for fixed $n_1$ and $m_2$ and large $m_1$ and $n_2$.

Based on the results of Ellingham and Schroeder \cite{distpart}, we might expect a complete multipartite graph to have a distinguishing partition if it has many small parts, and not to have a distinguishing partition if it has few large parts.  In our setting, which combines these two extremes, it seems natural to expect that a distinguishing partition, in the asymptotic sense, would exist if $n_2/m_1$ does not exceed a certain ratio.  Our main result is that this is indeed the case.

\begin{theorem}
\label{maintheorem}
Fix $n_1 \geq 2$ and $m_2 \geq 1$, and suppose that $m_1$ is large.  There exists a value $r = r_{n_1,m_2}$ such that the following holds.  $K_{m_1(n_1), m_2(n_2)}$ has a distinguishing partition if and only if $$n_2 \leq r_{n_1, m_2}m_1 + \epsilon(m_1)$$ for some function $\epsilon(m_1) \in o(m_1)$.
\end{theorem}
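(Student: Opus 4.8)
The plan is to reduce the statement to a purely combinatorial extremal question about hypergraphs and then attack that question with a mixture of a connectivity bound and an enumeration of small trees. To a partition $P=\{P_1,\dots,P_t\}$ of $V(X)$ I associate the hypergraph $H(P)$ with vertex set $\{P_1,\dots,P_t\}$ and with one edge $e_i=\{\,P_j : P_j\cap X_i\neq\emptyset\,\}$ for each part $X_i$ of $X$. If $P$ is distinguishing then no part meets a block in two vertices, since transposing two such vertices fixes $P$; hence $|e_i|=n_i$, and $H(P)$ has exactly $m_1$ edges of size $n_1$, exactly $m_2$ edges of size $n_2$, no isolated vertices, and no two coincident edges of a common size. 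Conversely, by tracking how an automorphism of $X$ permutes the parts and the blocks one checks that $P$ is distinguishing if and only if $H(P)$ admits no nontrivial automorphism (automorphisms being permitted to permute equal-size edges), and that every hypergraph of the above shape arises as $H(P)$ for some $P$. Thus Theorem~\ref{maintheorem} becomes: for fixed $n_1,m_2$ and large $m_1$, determine the largest $n_2$ admitting an asymmetric hypergraph with $m_1$ edges of size $n_1$ and $m_2$ edges of size $n_2$.

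For the upper bound, let $H$ be such a hypergraph and let $F$ consist of its $m_1$ edges of size $n_1$. A vertex of $H$ lying in no edge of $F$ is distinguished only by which of the $m_2$ large edges contain it, so there are at most $2^{m_2}=O(1)$ of them. A component of $F$ with $k$ edges has at most $(n_1-1)k+1$ vertices, and two components of $F$ cannot be isomorphic via an isomorphism respecting all the large edges, else swapping them would be a nontrivial automorphism of $H$. Since the number of isomorphism types of a connected $n_1$-uniform hypergraph with $k$ edges in which each vertex is also labelled by the set of large edges containing it is at most $B^{k}$ for a constant $B=B(n_1,m_2)$, at most $B^{K}$ components of $F$ have fewer than $K$ edges; choosing $K\asymp\log m_1$ then forces $F$ to have $o(m_1)$ components. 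Hence $|V(H)|\le (n_1-1)m_1+o(m_1)$, and as a large edge has $n_2$ vertices, $n_2\le (n_1-1)m_1+o(m_1)$, so $r_{n_1,m_2}\le n_1-1$.

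For the lower bound I would realize every admissible $n_2$. Take $F$ to be a disjoint union of pairwise non-isomorphic asymmetric $n_1$-uniform hypertrees, or of a convenient subclass of such trees, using all $m_1$ edges; then $F$ is itself asymmetric, so any automorphism of the full hypergraph, being forced to preserve the set of size-$n_1$ edges, is already trivial, and the $m_2$ large edges may be taken to be $V(H)$ with a few distinct vertices removed without creating symmetry. The number of vertices so realized is $(n_1-1)m_1$ plus the number of hypertree components, so the construction is optimized by making that number of components as large as possible — an extremal problem about how many pairwise non-isomorphic asymmetric $n_1$-uniform hypertrees of each size exist. This is precisely where counting trees of the relevant classes is the key tool: one needs both that enough such hypertrees exist in every size that arises and a sharp enough count for the maximum to meet the upper bound, thereby pinning down $r_{n_1,m_2}=n_1-1$ together with a threshold function $\epsilon(m_1)\in o(m_1)$.

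I expect the main obstacle to be exactly this reconciliation of the two bounds into a single threshold. The soft arguments above already give $r_{n_1,m_2}=n_1-1$ with only $o(m_1)$ slack, but producing one function $\epsilon$ requires a careful census of the admissible "decorated hypertree" components, including how the $m_2$ large edges may be used to distinguish otherwise-isomorphic components, together with enough care in the construction to attach all $m_2$ equal-size large edges without sacrificing any of the vertices built by $F$. The possible $m_2$-dependence of $\epsilon$ (and, if any, of $r$) should emerge from this enumeration.
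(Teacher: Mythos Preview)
Your upper bound is sound but not sharp, and your lower bound fails for $n_1\ge 3$; as a result your claimed value $r_{n_1,m_2}=n_1-1$ is wrong in general.

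The crucial slip is the sentence ``take $F$ to be a disjoint union of pairwise non-isomorphic asymmetric $n_1$-uniform hypertrees.'' For $n_1\ge 3$ no such hypertrees exist: every leaf edge of an $n_1$-uniform tree contains $n_1-1\ge 2$ degree-$1$ vertices, and transposing any two of them is a nontrivial automorphism. More generally, whenever a small edge has two degree-$1$ vertices lying in exactly the same set of large edges, swapping them is an automorphism of the whole hypergraph. So the large edges are not free decoration; they must be used to distinguish the degree-$1$ vertices inside each small edge, and this forces many vertices to lie in strictly fewer than $m_2$ large edges. Summing these forced losses over all $m_1$ small edges is a $\Theta(m_1)$ deficit in $m_2n_2$, so it depresses $r$ below $n_1-1$. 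Concretely, the paper's formula gives $r_{5,1}=3$ and $r_{4,2}=5/2$, whereas your argument predicts $4$ and $3$.

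What the paper actually does is encode membership in the $m_2$ large edges as a $2^{[m_2]}$-label on each vertex and define the \emph{weight} of a component as the sum of label sizes, so that $n_2=w(\tau'(P))/m_2$. The asymmetry constraint becomes: degree-$1$ vertices within a common small edge must carry distinct labels, hence at most one of them has full weight $m_2$, at most $\binom{m_2}{1}$ have weight $m_2-1$, and so on. This yields a sharp per-edge weight bound (Lemma~\ref{weightint} through Corollary~\ref{valuecor}) from which the exact value of $r$ drops out; the leading term is governed not by the number of vertices but by the achievable weight. The tree-counting then enters only at the $o(m_1)$ level, to pin down $\epsilon(m_1)$, and the relevant trees are labelled trees of bounded degree rather than bare asymmetric hypertrees. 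Your counting argument for the number of components is correct and is essentially the paper's Section~\ref{othersection} argument, but it controls the wrong quantity for the leading term.
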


We have that $r_{2,m_2} = 1$.  For $n_1 \geq 3$, we define $r_{n_1,m_2}$ by first choosing values of $j = j_{n_1,m_2}$ and $k = k_{n_1,m_2}$ such that
$$n_1 = 2 + {m_2 \choose 0} + {m_2 \choose 1} + \cdots + {m_2 \choose j} + k,$$ 
with either 
$$j < \lfloor (m_2-1)/2 \rfloor \quad \mbox{and} \quad 0 \leq k < {m_2 \choose j+1}, \quad \mbox{or} \quad j = \lfloor (m_2-1)/2 \rfloor \quad \mbox{and} \quad k \geq 0.$$ 
If $j < \lfloor (m_2-1)/2 \rfloor$, then let 
$$r = 1 + \sum_{i=0}^{j}\frac{m_2-i}{m_2}{m_2 \choose i} + \frac{m_2 - j - 1}{m_2}k,$$ 
and otherwise choose 
$$r = 1 + \sum_{i=0}^{j}\frac{m_2-i}{m_2}{m_2 \choose i} + \frac{1}{2}k.$$  We say that $j_{2,m_2} = -1$.

The structure of the paper and the proof Theorem \ref{maintheorem} is as follows.  In Section \ref{treesection}, we establish basic concepts on enriched trees and hypergraphs which are used heavily throughout the proof.  In Section \ref{corrsection}, we show how a type of partition of $K_{m_1(n_1), m_2(n_2)}$ known as a regular partition may be represented as a hypergraph with $m_i$ edges of size $n_i$, $i=1,2$.   We establish key lemmas for the general result in Section \ref{generallemmas}.  In Section \ref{modelconst}, we provide the general construction that maximizes $n_2$ to within an additive constant, given $m_1, n_1, m_2$.  Then we prove that for large $m_1$ relative to $n_1$ and $m_2$, if $n_2' > n_2$ and $K_{m_1(n_1),m_2(n_2')}$ has a distinguishing partition, then so does $K_{m_1(n_1),m_2(n_2)}$.

In Section \ref{n1is2}, we focus on the case that $n_1 = 2$.  Then the following refinement of Theorem \ref{maintheorem} holds.
\begin{theorem}
\label{n1is2thm}
There exist constants $\alpha > 0$ and $\beta > 1$ and
$$z := \left\lfloor {\log_{\beta}\left(\frac{m_1(\beta-1)}{\alpha \beta}\left({\log_\beta m_1}\right)^{3/2}\right)} \right\rfloor$$
such that Theorem \ref{maintheorem} holds with $\epsilon(m_1)$ of the form $$\frac{m_1}{z+1} + (1+o_{m_1}(1))\alpha \beta^z z^{-7/2}\left(\frac{\beta}{\beta-1}\right)^2 \approx \frac{m_1}{\log_{\beta}(m_1)}.$$
\end{theorem}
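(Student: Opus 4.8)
The plan is to take the extremal analysis behind Theorem~\ref{maintheorem} --- which, by Section~\ref{modelconst}, already determines the largest admissible $n_2$ up to an additive constant given $m_1,n_1,m_2$ --- and, for $n_1=2$, evaluate it asymptotically using a sharp count of trees. By the correspondence of Section~\ref{corrsection}, a distinguishing partition of $K_{m_1(2),m_2(n_2)}$ is equivalent to an asymmetric hypergraph $H$ with $m_1$ edges of size $2$ and $m_2$ edges of size $n_2$. For $n_1=2$ this is concrete: the $m_1$ small edges form an ordinary graph $G$ on $V(H)$, and the $m_2$ large edges assign to each vertex a \emph{type}, the set of large edges through it, with each of the $m_2$ coordinates required to lie in exactly $n_2$ types (counted with vertex multiplicity). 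Setting $\mu(H)=\sum_v(m_2-|\mathrm{type}(v)|)$, the constraint reads $\mu(H)=m_2(|V(H)|-n_2)$. Since a cyclic component of $G$ wastes edges relative to a tree, and since a component all of whose vertices carry the full type $[m_2]$ adds to the component count without adding to $\mu(H)$, an extremal $H$ may be taken with $G$ a forest, with $\mu(H)=O_{m_2}(1)$ (only enough departures from full type to keep the $m_2$ large edges pairwise distinct), and with the components of $G$ pairwise non-isomorphic and rigid; the monochromatic ones are then honest asymmetric unlabelled trees. Hence $|V(H)|=m_1+(\#\text{components of }G)$ while $n_2=|V(H)|-O_{m_2}(1)$, and optimizing the component count gives
$$ n_2 = m_1 + C(m_1) + O_{m_2}(1), $$
where $C(m_1)$ is the maximum size of a family of pairwise non-isomorphic asymmetric trees whose edge counts sum to at most $m_1$.

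This last quantity is a knapsack problem solved greedily. Writing $g(s)$ for the number of asymmetric trees on $s$ vertices (which vanishes for $s<7$), the optimum takes every asymmetric tree of each size $7,8,\dots$ up to the largest $z$ with $\sum_{s\le z}g(s)(s-1)\le m_1$, and then fills the remaining edge budget with trees of size $z+1$; thus $C(m_1)=\sum_{s\le z}g(s)+a_{z+1}+O_{m_2}(1)$, where $a_{z+1}\cdot z$ is the leftover edge count. To evaluate this I would invoke the tree enumeration of Section~\ref{treesection}, the paper's ``key tool'': singularity analysis of the generating function for asymmetric (and, more generally, enriched) trees gives $g(s)=(1+o(1))\,\alpha\beta^{s}s^{-5/2}$ for explicit constants $\alpha>0$, $\beta>1$, the exponent $-5/2$ being the usual one for unrooted unlabelled tree species. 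Summing the nearly geometric series, $\sum_{s\le z}g(s)(s-1)\le m_1<\sum_{s\le z+1}g(s)(s-1)$ becomes $m_1=(1+o(1))\,\alpha\beta^{z}z^{-3/2}\tfrac{\beta}{\beta-1}$, which inverts (using $z\sim\log_\beta m_1$) to $z=\lfloor\log_\beta(\tfrac{m_1(\beta-1)}{\alpha\beta}(\log_\beta m_1)^{3/2})\rfloor$; and pushing the evaluation of $\sum_{s\le z}g(s)$ and of the leftover $a_{z+1}$ to the next order yields $\epsilon(m_1)=C(m_1)+O_{m_2}(1)$ of the stated form $\tfrac{m_1}{z+1}+(1+o_{m_1}(1))\alpha\beta^{z}z^{-7/2}\big(\tfrac{\beta}{\beta-1}\big)^2\approx\tfrac{m_1}{\log_\beta m_1}$, with the factor $\tfrac{\beta}{\beta-1}$ coming from the geometric sum and its square from the extra weighting near the top sizes. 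Finally, combining $n_2=m_1+\epsilon(m_1)+O_{m_2}(1)$, the value $r_{2,m_2}=1$, and the monotonicity of Section~\ref{modelconst} (reducing the biconditional to one construction and one non-existence argument at the threshold) finishes the proof, the $O_{m_2}(1)$ being absorbed because the correction term tends to infinity.

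The hard part is the combination of the precise enumeration with the second-order bookkeeping. Obtaining $g(s)=(1+o(1))\alpha\beta^{s}s^{-5/2}$ with the correct $\alpha$ and $\beta$ --- rather than just the exponential growth rate --- is the technical heart and requires the full machinery of Section~\ref{treesection}, including a dissymmetry-type reduction from rooted to unrooted trees and the analysis of the dominant square-root singularity of the relevant generating function; and extracting the displayed closed form for $\epsilon(m_1)$, as opposed to merely $\Theta(m_1/\log m_1)$, forces one to control the floor defining $z$, the partially filled top layer of size-$(z+1)$ trees, and the polynomial factor $s^{-5/2}$ through every summation, which is exactly where the $\tfrac{\beta}{\beta-1}$ and $\big(\tfrac{\beta}{\beta-1}\big)^2$ appear. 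A secondary difficulty is the structural reduction itself: one must verify that enriched or ``marked'' tree components --- strictly more numerous than plain asymmetric trees --- can always be exchanged for plain trees at a cost of only $O_{m_2}(1)$ once the exact-size condition on the large edges is enforced, so that the plain-tree knapsack really does control the answer.
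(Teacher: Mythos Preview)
Your plan matches the paper's argument closely: the paper also reduces to the greedy packing of pairwise non-isomorphic asymmetric trees (this is exactly the construction $\Delta_{m_1(2),m_2}$ of Section~\ref{modelconst}), invokes the Harary--Robinson--Schwenk asymptotic $|\mathfrak{a}_i|=(1+o(1))\alpha\beta^i i^{-5/2}$ (Lemma~\ref{ordinarytreecount}), and then performs the same second-order summation you sketch, introducing $v^*(e)=v(e)-m_2/(z+1)$ and summing the near-geometric series to obtain the $(\beta/(\beta-1))^2$ factor.

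The one point where your outline understates the work is the ``secondary difficulty''. You assert that marked (defective) tree components can be exchanged for plain ones at cost $O_{m_2}(1)$; this is too strong and, as stated, not obviously true. A defective tree on $t'$ edges has value-per-edge $(m_2-d)/t'$, which beats a defect-free tree on $z+1$ edges whenever $t'<(z+1)(m_2-d)/m_2$; since labelled asymmetric trees of a given size genuinely outnumber unlabelled ones, the upper-bound side must control how many such small defective trees exist. The paper does this in Lemma~\ref{n1is2lemma}: it shows that positive-value components with defects and at most $\tfrac{m_2-1}{m_2}(z+2)$ edges number only $o(\beta^z z^{-9/2})=o(m_1/z^2)$, via an encoding through the species $L(\mathfrak{a}^\cdot)$ of ordered lists of rooted asymmetric trees and a bound $|L(\mathfrak{a}^\cdot)_{z'+1}|=O((\beta^*)^{z'})$ for any $\beta^*\in(\beta,\beta^{m_2/(m_2-1)})$. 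The resulting contribution to $\epsilon(m_1)$ is $o(m_1/z^2)$, not $O_{m_2}(1)$; this is still absorbed into the $(1+o_{m_1}(1))$ of the second-order term, so your final formula is unaffected, but the intermediate claim $n_2=m_1+C(m_1)+O_{m_2}(1)$ needs to be weakened to $n_2=m_1+C(m_1)+o(m_1/z^2)$, and establishing that weaker bound is where the real work lies.
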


In Section \ref{kis0jsmall}, we consider the case that $k=0$ and $j < \lfloor (m_2-1)/2 \rfloor$.  Then Theorem \ref{maintheorem} can be refined as follows.
\begin{theorem}
\label{kis0jsmallthm}
If $k=0$ and $j < \lfloor (m_2-1)/2 \rfloor$, then Theorem \ref{maintheorem} holds with $\epsilon(m_1)$ of the form $$\left(\frac{(2m_2-4j-4)^{\frac{2m_2-4j-5}{2m_2-4j-4}}}{2m_2-4j-5}C^{\frac{1}{2m_2 - 4j - 4}} + o_{m_1}(1)\right)m_1^{\frac{2m_2 - 4j - 5}{2m_2 - 4j - 4}}$$ for a value of $C$ that depends only on $n_1$ and $m_2$.
\end{theorem}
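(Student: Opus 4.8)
The plan is to handle both implications through the hypergraph correspondence of Section~\ref{corrsection}, by which a distinguishing partition of $K_{m_1(n_1),m_2(n_2)}$ is the same thing as a rigid (automorphism-free) hypergraph $H$ with $m_1$ edges of size $n_1$ and $m_2$ edges of size $n_2$, no isolated vertices, and no repeated edge. Write $T_v \subseteq \{1,\dots,m_2\}$ for the set of size-$n_2$ edges through a vertex $v$, so that $m_2 n_2 = \sum_{v} |T_v| \le m_2\,|V(H)|$. Thus the question reduces to how large $|V(H)|$ can be, corrected by how economically the $m_2$ large edges can be made pairwise distinct and the whole configuration rigid. Throughout, set $D := 2m_2-4j-4$; the hypothesis $j<\lfloor (m_2-1)/2\rfloor$ guarantees $D\ge 2$, which is exactly what places the relevant combinatorial class in the polynomial-growth regime rather than the exponential-growth regime responsible for the logarithmic behaviour in Theorem~\ref{n1is2thm}.

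For the ``if'' direction I would start from the general construction of Section~\ref{modelconst}, which already realises the leading term $rm_1$, and optimise its lower-order part. The size-$n_1$ edges of $H$ break into connected components under the relation of sharing a vertex, and a connected $n_1$-uniform hypergraph with $c$ edges spans at most $(n_1-1)c+1$ vertices; hence, once a bulk of size-$n_1$ edges is arranged so that the value $r$ is attained, the further gain in $n_2$ is controlled by the number $\ell$ of components, which for rigidity of $H$ must be pairwise non-isomorphic and individually rigid as enriched trees in the sense of Section~\ref{treesection}. The key input, extracted from the tree counts of that section in the case $k=0$, $j<\lfloor(m_2-1)/2\rfloor$, is that the number of admissible components of size $c$ grows \emph{polynomially} of degree $D-2$, namely as $(1+o_{c}(1))\,C\,c^{D-2}$ with $C=C_{n_1,m_2}$. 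Using every such component of size at most $W$, the identity $\sum_{c\le W} c\cdot C c^{D-2}=(1+o(1))m_1$ gives $W=(1+o(1))(Dm_1/C)^{1/D}$, whence $\ell=\sum_{c\le W}C c^{D-2}=(1+o(1))\frac{D^{(D-1)/D}}{D-1}C^{1/D}m_1^{(D-1)/D}$ — precisely the claimed $\epsilon(m_1)$. It then remains to check that this system of components admits a vertex-typing under which all $m_2$ large edges share the common size $n_2=rm_1+\ell-O(1)$ while staying distinct and leaving $H$ rigid; this is straightforward given the $O(1)$-vertex slack available for non-generic types and the small group $S_{m_2}$ acting on the large edges.

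The converse is the same bookkeeping run as an upper bound. From a rigid $H$ one again examines the component decomposition of its size-$n_1$ edges: each component must be rigid and no two isomorphic, except for a correction that is bounded because it can only be ``repaired'' through the finitely many elements of $S_{m_2}$ permuting the large edges, so $|V(H)|\le (n_1-1)m_1+\ell$ with $\ell$ the number of components, and $\ell$ is bounded by the maximum over all ways of writing $m_1$ as a sum of sizes of pairwise distinct admissible components — the identical extremal problem, hence $\ell\le (1+o(1))\frac{D^{(D-1)/D}}{D-1}C^{1/D}m_1^{(D-1)/D}$; together with the typing constraints, which force the sharper bound $\sum_v|T_v|\le m_2(rm_1+\ell)+O(1)$ rather than $m_2(n_1-1)m_1$, this delivers $n_2\le rm_1+\epsilon(m_1)$. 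The step I expect to be the main obstacle is the tree-counting that underpins both directions: proving that in this regime the rigid enriched trees of size $c$ number $(1+o(1))C c^{D-2}$, pinning down the constant $C$ in terms of $n_1$ and $m_2$, and confirming that passing from the discrete sums $\sum T(c)$ and $\sum c\,T(c)$ to their integral approximations introduces only an $o(m_1^{(D-1)/D})$ error — which is where the exact coefficient $\tfrac{D^{(D-1)/D}}{D-1}$ and the power $C^{1/D}$ emerge.
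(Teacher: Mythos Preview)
Your high-level plan matches the paper's: decompose $\tau'(P)$ into components, identify the positive-value components as trees, show there are $(C+o(1))t^{D-2}$ of them on $t$ edges, and optimise by packing all such trees up to a threshold $W\approx (Dm_1/C)^{1/D}$. You are also right that the tree-count is the technical heart.

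There is, however, a real gap in how you pass from the number $\ell$ of components to $\epsilon(m_1)$. The inequality $|V(H)|\le (n_1-1)m_1+\ell$ is essentially irrelevant: it yields only $n_2\le (n_1-1)m_1+\ell$, and $r<n_1-1$ whenever $n_1\ge 3$, so this does not even recover the correct leading term. What governs $n_2-rm_1$ is the \emph{sum of values} $\sum_G v(G)$ in the sense of Section~\ref{generallemmas}, not the raw vertex count, and by Corollary~\ref{valuecor} a positive-value tree can a priori have value anywhere from $1$ to $m_2-2j-2$. Your sentence ``typing constraints force $\sum_v|T_v|\le m_2(rm_1+\ell)+O(1)$'' asserts what needs to be proved without saying why; and in the construction direction you never say what value each of your $\ell$ components actually achieves, so nothing pins down the coefficient.

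The paper closes this gap and performs the count in one stroke via a contraction argument. From each positive-value tree $G$ it builds a bounded-size combinatorial tree $c'(G)$ by collapsing every maximal ``blue'' degree-$2$ path to a single integer-weighted edge; Lemma~\ref{treevalue} forces $c'(G)$ to have at most $2m_2-4j-5$ edges, with equality exactly when $v(G)=1$. Counting the edge-weightings of each possible $c'(G)$ shows that trees with $v(G)\ge 2$ contribute only $o(t^{D-2})$, so \emph{almost every} positive-value tree on $t$ edges has value exactly $1$. That is what makes ``sum of values'' coincide with ``number of components'' and yields the precise coefficient; the constant $C$ itself falls out of enumerating the extremal shapes $c'(G)$ via the species $\mathfrak{A}_{\mathcal{E}_1+\mathcal{E}_3}$ (or $\mathfrak{A}_{\mathcal{E}_1+2\mathcal{E}_3}$ when $j=0$) of Lemma~\ref{kis0treecountlemma}.
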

\noindent The value of $C$ will be specified in Section \ref{kis0jsmall}.

We consider $k \geq 1$ and $j < \lfloor (m_2-1)/2 \rfloor$ in Section \ref{othersection}.

\begin{theorem}
\label{otherthm}
If $k \geq 1$ and $j < \lfloor (m_2-1)/2 \rfloor$, then Theorem \ref{maintheorem} holds with $\epsilon(m_1)$ of the form $\Theta(m_1/(\log m_1))$.
\end{theorem}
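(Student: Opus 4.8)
The plan is to work throughout with the hypergraph reformulation of Section~\ref{corrsection} and the monotonicity from Section~\ref{modelconst}: a distinguishing partition of $K_{m_1(n_1),m_2(n_2)}$ exists precisely when there is a hypergraph $H$ (on some vertex set, with no isolated vertices) consisting of $m_1$ pairwise distinct ``small'' edges of size $n_1$ and $m_2$ pairwise distinct ``big'' edges of size $n_2$, whose automorphism group --- permuting vertices, small edges among small edges and big edges among big edges, compatibly --- is trivial; and since the valid values of $n_2$ form a down-set, it suffices to locate the threshold $n_2^{\max}(m_1)$ up to error $O(m_1/\log m_1)$, the estimate $n_2^{\max}(m_1)=rm_1+o(m_1)$ being already known from Theorem~\ref{maintheorem}. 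Two features of $H$ drive the argument. First, writing $\tau(v)\subseteq[m_2]$ for the set of big edges through a vertex $v$, one has $m_2 n_2 = \sum_v|\tau(v)|$. Second, two vertices lying in exactly the same small edges (``small-twins'') must have distinct $\tau$, else their transposition is a nontrivial automorphism; more generally, two connected components of the small-edge hypergraph that are isomorphic as $\tau$-decorated hypergraphs, even after relabelling $[m_2]$, may be interchanged.

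For the lower bound I would refine the model construction of Section~\ref{modelconst}. Take the small-edge hypergraph to be a disjoint union of $c$ rigid ``hypercaterpillar'' gadgets on a total of $m_1$ edges; a hyperforest with $m_1$ edges and $c$ components has exactly $(n_1-1)m_1+c$ vertices. Decorate each gadget so that all vertices of degree $\ge 2$, together with the $c$ surplus vertices (one per component), receive $\tau=[m_2]$, while the $n_1-2$ degree-one vertices of each internal edge receive the $n_1-2$ distinct subsets of $[m_2]$ of largest total size, i.e.\ the complements of the sets in the collection $\mathcal D$ of size $n_1-2$ consisting of all subsets of $[m_2]$ of size $\le j$ together with $k$ of size $j+1$. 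A direct count gives $\sum_v|\tau(v)| = m_2 rm_1 + \Theta(m_2 c)$, so every big edge may be taken of size $n_2 = rm_1+\Theta(c)$ after perturbing the big edges to be pairwise distinct (which costs $O(1)$ vertices and leaves rigidity intact). Here is where $k\ge 1$ enters: there are then $N_0=\binom{\binom{m_2}{j+1}}{k}\ge 2$ \emph{equally optimal} choices of the size-$(j+1)$ complements on each internal edge, so a gadget with $s$ edges admits at least $N_0^{\,s-O(1)}$ optimally decorated rigid forms, hence at least $\gamma^{s}$ pairwise non-isomorphic ones (dividing by the $O(1)$ relabellings of $[m_2]$ and reflections) for a constant $\gamma=\gamma_{n_1,m_2}>1$; this is the point at which the tree-counting results of Section~\ref{treesection} are used. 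Exhausting all such gadgets of up to $z$ edges while keeping the edge total equal to $m_1$ forces $z=\Theta(\log m_1)$ and yields $c=\Theta(m_1/\log m_1)$. Since distinct gadgets are non-isomorphic, any automorphism of $H$ permutes the gadgets trivially and, each being rigid, is the identity; so this gives a distinguishing partition with $n_2=rm_1+\Omega(m_1/\log m_1)$.

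For the upper bound, let $H$ be as above and partition its vertices into small-twin classes $T$. Within $T$ the values $\tau(v)$ are distinct, so $\sum_{v\in T}|\tau(v)|\le \Phi(|T|)$, where $\Phi(\ell)$ is the maximal total size of $\ell$ distinct subsets of $[m_2]$; $\Phi$ is concave and $\Phi(n_1-2)=\sum_{S\in\mathcal D}(m_2-|S|)$. Combining this with $m_2 n_2=\sum_T\sum_{v\in T}|\tau(v)|$, the incidence identity $\sum_v\deg(v)=n_1 m_1$, and the fact that an $n_1$-uniform hypergraph with $m_1$ edges and $c$ components has at most $(n_1-1)m_1+c$ vertices (with equality only for hyperforests), a short optimization gives $m_2 n_2\le m_2 rm_1 + O(m_2 c)$. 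Finally the $c$ components lie in distinct isomorphism classes of $\tau$-decorated hypergraphs up to relabelling $[m_2]$, and there are at most $\delta^{\,s}$ such classes on $s$ vertices, so having $c$ components forces the total number of vertices to be $\gtrsim c\log_\delta c$; as this is at most $(n_1-1)m_1+c$, we get $c=O(m_1/\log m_1)$ and hence $n_2\le rm_1+O(m_1/\log m_1)$. Together with the previous paragraph this yields $n_2^{\max}(m_1)=rm_1+\Theta(m_1/\log m_1)$, which is the assertion; the hypothesis $j<\lfloor(m_2-1)/2\rfloor$ is used to fix $\mathcal D$ (and hence $r$ and the value of $\Phi(n_1-2)$) in this uniform way.

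The main obstacle I expect is the upper-bound optimization: one must show that no configuration of the small-edge hypergraph together with the decoration $\tau$ beats $rm_1$ by more than $O(1)$ per component --- that departures from the balanced, hyperforest, optimally decorated picture are never profitable --- and simultaneously explain why the \emph{exponential} (rather than merely polynomial, as in Theorem~\ref{kis0jsmallthm}) abundance of decorated gadgets, guaranteed here by $k\ge 1$ through the enumeration of Section~\ref{treesection}, pins the error at $\Theta(m_1/\log m_1)$ and neither larger nor smaller. The complementary delicacy on the construction side is verifying that the many small gadgets can be made simultaneously rigid, pairwise non-isomorphic up to relabelling the big edges, and consistently decorated.
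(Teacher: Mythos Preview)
Your plan is sound and aligns with the paper's argument: both directions rest on (i) each component of the small-edge hypergraph contributes value at most a constant depending only on $n_1,m_2$, and (ii) the number of usable components is $\Theta(m_1/\log m_1)$ because the relevant decorated trees grow exponentially in their edge count. Your ``short optimization'' giving $m_2n_2\le m_2rm_1+O(m_2c)$ is precisely the content of Corollary~\ref{valuecor} (each tree component has value at most $m_2-2j$, non-trees have nonpositive value), so the obstacle you flag is already handled by the Section~\ref{generallemmas} lemmas; likewise the paper's upper-bound counting (split positive-value components into those with at most $\tfrac12\log_b m_1$ edges and the rest) is the same pigeonhole as your ``$c\log_\delta c\lesssim$ total vertices'' inequality.

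The one genuine difference is the source of exponential growth in the lower bound. You exploit the $N_0=\binom{\binom{m_2}{j+1}}{k}\ge 2$ equally-optimal label choices on each internal edge of a fixed path-shaped spine; the paper instead varies the underlying tree shape, exhibiting an explicit family of caterpillars indexed by subsets $S$ of the spine that carry a pendant vertex, interpreted in the species $\mathfrak{a}_{\sum_{i=1}^{k+2}a_i\mathcal{E}_i}$ with $a_i=\binom{\binom{m_2}{j+1}}{k-i+2}$. Both mechanisms genuinely need $k\ge 1$ --- yours so that $N_0\ge 2$, the paper's so that $a_3\ge 1$ and degree-$3$ spine vertices are permitted --- and either yields $\Omega(m_1/\log m_1)$ components of positive value. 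Your version is a bit more direct to write down; the paper's has the advantage that rigidity of each gadget is immediate from the distinct pendant positions, whereas you must discard palindromic label sequences. For the global symmetry under permutations of $[m_2]$ (the ``delicacy'' you mention), you can simply cite the symmetry-breaking ring of Section~\ref{modelconst} rather than engineering it into the gadgets.
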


In Section \ref{kis0jlarge}, we consider the case that $k=0$ and $j = (m_2-2)/2$.  Then the following exact result for large $m_1$ is possible.

\begin{theorem}
\label{kis0jlargethm}
Suppose that $k=0$ and $j = \lfloor (m_2-1)/2 \rfloor$. Theorem \ref{maintheorem} holds with $\epsilon(m_1) = 2^{m_2-1}$ if $m_2$ is even and at least $4$ and $\epsilon(m_1) = 2^{m_2-1}-1$ if $m_2$ is odd or $2$ for sufficiently large $m_1$.
\end{theorem}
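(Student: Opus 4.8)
The plan is to use the equivalence established in Section~\ref{corrsection} between the existence of a distinguishing partition of $K_{m_1(n_1),m_2(n_2)}$ and the existence of an asymmetric hypergraph with $m_1$ edges of size $n_1$ and $m_2$ edges of size $n_2$, together with the monotonicity statement proved in Section~\ref{modelconst}: it then suffices to exhibit such a hypergraph when $n_2 = r m_1 + \epsilon(m_1)$ and to rule one out when $n_2 = r m_1 + \epsilon(m_1)+1$, after which monotonicity propagates the first assertion downward and the second upward. Observe that in the present case $r = 1 + \sum_{i=0}^{j}\binom{m_2-1}{i}$ is an \emph{integer} (the half-integer term $\tfrac12 k$ is absent since $k=0$), so $r m_1 + \epsilon(m_1)$ is an integer and an exact statement is even meaningful. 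Throughout I would classify each vertex of the hypergraph by its \emph{big profile} $T\subseteq[m_2]$, the set of large edges containing it; writing $c_T$ for the number of vertices with profile $T$ one has $m_2 n_2 = \sum_T |T|\,c_T$, and since any two vertices with the same big profile must lie in different sets of small edges, $\sum_T (c_T-1) \le m_1 n_1$ once $m_1$ is large.

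For the construction I would specialize the model of Section~\ref{modelconst}. The defining identity $n_1 = 2 + \sum_{i=0}^{j}\binom{m_2}{i}$ with $k=0$ and $j=\lfloor(m_2-1)/2\rfloor$ says exactly that $n_1-2$ equals the number of big profiles $T$ with $|T| > m_2/2$. Assign roughly $m_1$ vertices to each such ``upper'' profile, and build the $m_1$ small edges so that each one meets every upper profile exactly once, using the two remaining slots of each small edge both to enlarge the top profile $T=[m_2]$ to about $2m_1$ vertices and to separate those vertices; when $m_2$ is even, also load a balanced number of extra vertices onto the complementary pairs of ``boundary'' profiles $|T|=m_2/2$, using that a vertex of profile $T$ paired with one of profile $[m_2]\setminus T$ adds $1$ to every large edge, just like a vertex of profile $[m_2]$. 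Evaluating $\tfrac1{m_2}\sum_T|T|c_T$ along these lines gives $n_2 = r m_1 + \epsilon(m_1)$ on the nose, the parity of $m_2$ (and the degeneracy at $m_2=2$) deciding whether the boundary profiles can be exploited to gain the last unit, that is, whether $\epsilon(m_1)$ is $2^{m_2-1}$ or $2^{m_2-1}-1$. One must also choose the small edges in sufficiently general position that no nontrivial permutation of the $m_1+m_2$ edges, nor of the vertices, is an automorphism; this rigidity costs only boundedly many vertices and is absorbed in $\epsilon$.

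For the converse I would argue that the hypothesis $k=0$ places $n_1$ at a breakpoint where the inequality $\sum_T(c_T-1)\le m_1 n_1$ has no slack to give away. Using the general lemmas of Section~\ref{generallemmas}, one shows that in any asymmetric hypergraph with the prescribed parameters the quantity $\sum_T|T|c_T$ is maximized only by a configuration of the type just described: a small edge can ``pay'' for at most one additional vertex per upper profile, together with the two-slot bonus at the top profile; the large edges force the profiles that actually occur to separate $[m_2]$ and, under asymmetry, to include enough upper profiles; and the boundary profiles can contribute only an $O(1)$ additive amount rather than a positive rate --- which is precisely why, once $k\ge1$, the extra room in the small edges produces the genuinely larger error terms of Theorems~\ref{kis0jsmallthm} and~\ref{otherthm}, whereas here the error stays constant. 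Carrying this accounting through yields $n_2 \le r m_1 + \epsilon(m_1)$, so no hypergraph exists at $r m_1 + \epsilon(m_1)+1$.

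The main obstacle is the \emph{exactness} of this last step: it is not enough to bound the overhead by $O(1)$; one must pin it to precisely $2^{m_2-1}$, respectively $2^{m_2-1}-1$, which amounts to a tight extremal statement for the least number of small-edge incidences needed to separate the vertices inside every big profile and simultaneously destroy all hypergraph automorphisms, matched exactly by the construction. The parity dichotomy (even $m_2\ge4$ versus odd $m_2$ or $m_2=2$) is the subtle sub-case of this: it hinges on whether the complementary pairs of boundary profiles can be loaded so as to recover one further vertex of the top profile, and I expect the bulk of Section~\ref{kis0jlarge} to consist of this bookkeeping together with the rigidity verifications for both parities.
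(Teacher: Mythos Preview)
Your high-level plan---upper bound via the lemmas of Section~\ref{generallemmas}, matching construction, then monotonicity---is the right outline, but your account of both halves misidentifies the mechanism, and the gap is not merely bookkeeping.

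On the upper bound: the constant $2^{m_2-1}$ does not come from an analysis of boundary profiles or from the inequality $\sum_T(c_T-1)\le m_1 n_1$, which is far too weak (it only yields $n_2\le m_1 n_1 + 2^{m_2}$, whereas $r<n_1$). The paper's argument is that Corollary~\ref{valuecor}(1) forces $v(G)\le 0$ for \emph{every} component $G$, after which Lemma~\ref{valuetotal} bounds the total value by $m_2 2^{m_2-1}$; that constant is exactly the weight contributed by one degree-$0$ vertex of each nonempty label. The loss of one unit when $m_2$ is odd or equal to $2$ is \emph{not} about whether complementary boundary profiles can be loaded; it is a structural statement about value-zero components. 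For odd $m_2$ the paper shows that any value-zero component would have to be a defect-free cycle and hence symmetric; for $m_2=2$ the value-zero components are chains whose only asymmetric labelling forces a global involution swapping the two $n_2$-edges. Your complementary-pairs heuristic predicts neither obstruction.

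On the construction: you cannot ``specialize the model of Section~\ref{modelconst}'', since that section explicitly excludes the case $j=\lfloor(m_2-1)/2\rfloor$ (its model is assembled from positive-value trees, of which there are none here). The paper instead writes down explicit labelled chains or a ring, case by case: for even $m_2\ge4$, a disjoint union of $m_2$ chains of distinct lengths, each with two leaf vertices carrying size-$m_2/2$ labels; for $m_2=2$, a single chain with two interior weight-$1$ defects; for odd $m_2$, a single ring with $m_2$ weight-$(m_2-1)$ defects breaking the cyclic symmetry. Your proposal to load complementary boundary pairs is a different construction, and you would need to verify both that it attains the stated $n_2$ and that the resulting hypergraph is asymmetric; neither follows from what you have written.
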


In Section \ref{k1jlarge}, we prove the following for $k \geq 1$ and $j = \lfloor (m_2-1)/2 \rfloor$.

\begin{theorem}
\label{k1jlargethm}
If $k \geq 1$ and $j = \lfloor (m_2-1)/2 \rfloor$, then Theorem \ref{maintheorem} holds with $\epsilon(m_1) = 2^{m_2-1}-1$ if $km_1$ is even and $m_2$ is odd, and otherwise $\epsilon(m_1) = 2^{m_2-1} + \lfloor rm_1 \rfloor - rm_1$, for sufficiently large $m_1$.
\end{theorem}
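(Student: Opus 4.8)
The plan is to argue through the reduction of Section~\ref{corrsection}: $K_{m_1(n_1),m_2(n_2)}$ admits a distinguishing partition if and only if there is an asymmetric hypergraph on some vertex set having exactly $m_1$ edges of size $n_1$ and $m_2$ edges of size $n_2$. Combined with the monotonicity established in Section~\ref{modelconst} — for $m_1$ large, if such a hypergraph exists for $n_2'$ then one exists for every $n_2<n_2'$ — it suffices to establish two things for $m_1$ sufficiently large: (i) an asymmetric hypergraph with these parameters exists when $n_2=N$, where $N:=\lfloor rm_1\rfloor+2^{m_2-1}$ in general and $N:=\lfloor rm_1\rfloor+2^{m_2-1}-1$ when $km_1$ is even and $m_2$ is odd; and (ii) no such hypergraph exists when $n_2=N+1$. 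Since Section~\ref{modelconst} already supplies a model construction correct to within an additive constant, the content is entirely in pinning down that constant and in matching it with an upper bound that is tight to the last unit.

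Set up the cell decomposition: the $m_2$ edges of size $n_2$ partition the vertices into at most $2^{m_2}$ cells, where cell $S\subseteq[m_2]$ consists of the vertices lying in exactly the big edges indexed by $S$. With $c_S$ the size of cell $S$ one has $n_2=\sum_{S\ni\ell}c_S$ for each of the $m_2$ big edges $\ell$, hence $m_2 n_2=\sum_S|S|c_S$. The identity $\frac{m_2-i}{m_2}\binom{m_2}{i}=\binom{m_2-1}{i}$ rewrites the threshold slope as $r=1+\sum_{i=0}^{j}\binom{m_2-1}{i}+\tfrac12 k$, so that $rm_1\in\mathbb Z$ exactly when $km_1$ is even; this is the origin of the term $\lfloor rm_1\rfloor-rm_1$ in $\epsilon$. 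The additive $2^{m_2-1}$ is the number of cells containing any one fixed big edge: it is the number of "free" vertices — vertices incident to no small edge — that can be inserted, one into each cell, so as to raise every big edge by the same amount while keeping the structure asymmetric, and when $m_2$ is odd a parity obstruction coming from the complementation pairing $S\leftrightarrow[m_2]\setminus S$ costs one such slot, producing $2^{m_2-1}-1$.

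For direction (i) I would start from the model construction of Section~\ref{modelconst} — in which the $m_1$ small edges are placed so as to populate the cells of depth at most $j$ rigidly, their anchor vertices and shallow-cell vertices forcing the identity on the big edges and then rigidity within each cell — and push the sizes $c_S$ of the deeper cells to their maximum by inserting one free vertex into each cell, apportioning the depth-$(j+1)$ population according to the parities of $km_1$ and $m_2$ so that every big edge reaches the common value $N$ with all $c_S$ integral; a finite perturbation of a few small edges near the anchors kills any residual automorphism. For direction (ii) I would suppose $n_2=N+1$ and, using the lemmas of Section~\ref{generallemmas}, show that $m_2 n_2=\sum_S|S|c_S$ is then too large to be realized by cell sizes that the $n_1 m_1$ available small-edge incidences can separate: some two cells of equal depth must receive isomorphic small-edge decorations, or some two vertices of one cell must, and the corresponding transposition — possibly composed with a permutation of the big edges — is a nontrivial automorphism.

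The main obstacle is that, unlike in the $\epsilon\in o(m_1)$ regime, no slack is left for generic symmetry-breaking, so both the asymmetry of the extremal construction in (i) and the tightness of the counting in (ii) must be checked essentially by hand, with the complementation symmetry at depth $\lfloor(m_2-1)/2\rfloor$ for odd $m_2$ — which forces the $-1$ and which the construction must explicitly break — being the most delicate point.
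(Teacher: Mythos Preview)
Your upper-bound sketch is roughly on target, though the mechanism for the extra $-1$ when $km_1$ is even and $m_2$ is odd is not quite ``two cells must receive isomorphic decorations.'' The paper's argument is that at the would-be extremum $n_2=rm_1+2^{m_2-1}$, Lemma~\ref{weightbound} and the strict concavity $w_{n_1-k-1}+w_{n_1-k-3}<2w_{n_1-k-2}$ force every edge of $\tau'(P)$ to have exactly $n_1-k-2$ degree-$1$ vertices and force $\tau'(P)$ to be defect-free; but then $\tau(P)$ has a nontrivial automorphism permuting the $n_2$-edges.

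The real gap is in your construction. You propose to start from the model construction $\Delta_{m_1(n_1),m_2}$ of Section~\ref{modelconst} and perturb. But that section explicitly assumes $j<\lfloor(m_2-1)/2\rfloor$: its engine is a supply of \emph{positive}-value tree components, and by Corollary~\ref{valuecor} no such components exist once $j=\lfloor(m_2-1)/2\rfloor$. To hit $n_2=\lfloor rm_1\rfloor+2^{m_2-1}$ exactly, the components of $\tau'(P)$ must have total value essentially zero, which by Lemma~\ref{weightbound} means almost every $n_1$-edge must meet exactly $k+2$ others at single vertices of degree $2$. Neither trees nor a symmetry-breaking ring nor a ``finite perturbation near the anchors'' produce this; you need a connected $n_1$-uniform hypergraph in which all edges are $(k+2)$-valent (up to $O(1)$ exceptions of valence $k+1$) and which is edge-rigid. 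The paper obtains these $(\psi,k+2,m_1,n_1)$-regular hypergraphs by dualizing asymmetric nearly-$(k+2)$-regular graphs, whose existence for large $m_1$ comes from results on random regular graphs. That ingredient is missing from your plan, and without it the construction side does not go through.
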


\section{Enriched trees and hypergraphs}
\label{treesection}

\subsection*{Combinatorial species and enriched trees}

We make use of the language of combinatorial species, as presented by Bergeron, Labelle, and Leroux \cite{species}.  A \textit{species} $F$ is, for every finite set $U$, a finite set of objects $F[U]$, called structures, together with, for every bijection $\sigma: U \rightarrow U'$, a function $F[\sigma]: F[U] \rightarrow F[U']$ that satisfies the following two functoriality properties:
\begin{itemize}
\item[1)] for all bijections $\sigma: U \rightarrow U'$ and $\sigma': U' \rightarrow U''$, $F[\sigma' \circ \sigma] = F[\sigma'] \circ F[\sigma]$,
\item[2)] for the identity map $\mbox{Id}_U$, $F[\mbox{Id}_U] = \mbox{Id}_{F[U]}$.
\end{itemize}
The function $F[\sigma]$ is known as transport of species.  Consider the symmetric group $S_U$ that acts on $U$.  Given an $F$-structure $s$, we say that the automorphism group of $s$, $\mbox{aut}(s)$, is the subgroup of those $\sigma \in S_U$ that satisfy $F[\sigma](s) = s$.

Let $\mathfrak{a}$ be the species of asymmetric trees, or trees whose automorphism group is trivial, and let $\mathfrak{a}^\cdot$ be the species of rooted asymmetric trees.  A rooted tree is considered asymmetric if it has no nontrivial root-preserving automorphism; it is possible that the underlying unrooted tree structure is not asymmetric.

Now let $F$ be a species that contains at least one structure over a set of size $1$.  An $F$-\textit{enriched tree} on a set $U$ is a tree on $U$ together with an $F$-structure $s_v$ on the neighbor set $N(v)$ of every vertex $v \in U$.  If $\sigma$ is an automorphism of an $F$-enriched tree $t$, then $\sigma$ is an automorphism of the underlying tree structure of $t$.  Furthermore, if $\sigma_v$ is the restriction of $\sigma$ on $N(v)$, then the transport of species $F[\sigma_v]$ takes $s_v$ to $s_{\sigma(v)}$.  We say that $\mathfrak{a}_F$ is the species of asymmetric $F$-enriched trees.  For example, when $F$ is $\mathcal{E}$, the species of sets, then there is a unique $\mathcal{E}$-structure on every finite set, and $\mathfrak{a}_{\mathcal{E}}$ is simply $\mathfrak{a}$.  The species $\mathfrak{A}$ and $\mathfrak{A}_F$ are, respectively, the species of (not necessarily asymmetric) trees and the species of $F$-enriched trees.

The sum of two species $(F+F')[U]$ is the disjoint union $F[U] + F'[U]$ such that $(F+F')[\sigma](s) = F[\sigma](s)$ if $s \in F[U]$ and $(F+F')[\sigma](s) = F'[\sigma](s)$ if $s \in F'[U]$.  If $a$ is a positive integer, then $aF$ is the sum of $a$ copies of $F$.  We say that $F_i[U]$ is $F[U]$ when $|U| = i$, and otherwise $F_i[U] = \emptyset$, and $|F_i|$ is the number of structures of $F$ over a set with $i$ elements.

Consider the species $F = \sum_{i=1}^\kappa a_i\mathcal{E}_i$ for nonnegative integers $\kappa, a_1, \ldots, a_\kappa$ with $\kappa \geq 1$ and $a_1 \geq 1$.  This is the species that consists of $a_i$ distinct set structures over a set with $i$ elements for $1 \leq i \leq \kappa$ and otherwise no structures.  Then the species $\mathfrak{a}_F$ may be regarded as the species of asymmetric trees in which every vertex has degree at most $\kappa$, and every vertex of degree $i$ is assigned a label from a pool of $a_i$ possible labels.  Such a tree is asymmetric if it has no nontrivial label-preserving automorphism.  Later, we show how estimating the number of elements of $\mathfrak{a}_F$ with a given number of vertices can help determine asymptotic bounds for the distinguishing partitions problem.

A structure of the product species $FF'$ over $U$ is an ordered pair $(f,f')$ for an $F$-structure $f$ over $U_1$ and a $F'$-structure $f'$ over $U_2$ for some partition $U_1 \sqcup U_2$ of $U$.  Transport of species is defined by $(FF')[\sigma](s) = (F[\sigma_1](f),F'[\sigma_2](f'))$, each $\sigma_i$ the restriction of $\sigma$ to $U_i$.

\subsection*{Hypergraphs}

A \textit{hypergraph} $H$ is a triple $(V(H), E(H), I(H))$, with $V(H)$ a finite set of elements called \textit{vertices} and $E(H)$ a finite set of elements called \textit{edges}.  The \textit{incidence relation} $I(H)$ is a subset of $V(H) \times E(H)$.  We will generally treat edges as subsets of $V(H)$.  The \textit{degree} of $v \in V(H)$, denoted $\deg(v)$, is the number of edges incident to $v$.  $H$ is connected if the bipartite graph with vertex sets $V(H)$ and $E(H)$ and edge set $I(H)$ is connected, and $|E(H)| \neq \emptyset$.  If every edge is incident to $n_1$-vertices, then $H$ is $n_1$-\textit{uniform}.

For a hypergraph $H$ with an edge $e$, $\deg_1(H)$ and $\deg_1(e)$ denote the number of vertices of degree $1$ in $H$ and $e$, while $\deg_2^+{H}$ and $\deg_2^+{e}$ are the numbers of vertices of degree at least $2$ in $H$ and $e$.

An automorphism $\sigma$ of $H$ is a permutation of $V(H)$ and $E(H)$ such that $(v, e) \in I(H)$ if and only if $(\sigma(v), \sigma(e)) \in I(H)$ for all vertices $v$ and edges $e$.  We say that $\sigma$ is \textit{trivial} if $\sigma(v) = v$ and $\sigma(e) = e$ for all vertices $v$ and edges $e$, and $H$ is \textit{asymmetric} if the only automorphism of $H$ is trivial.  Thus we allow that hypergraphs may contain multiple edges that are incident to the same vertex set, but such hypergraphs are not asymmetric.

A connected $n_1$-uniform hypergraph $H$ is a \textit{tree} if $E(H)$ can be enumerated $(e_1, \ldots, e_{|E(H)|})$ in such a way that for each $2 \leq i \leq |E(H)|$, we have that $|e_i \cap (e_1 \cup \cdots \cup e_{i-1})| = 1$.  Equivalently, a tree is a connected $n_1$-uniform hypergraph $H$ with $(n_1-1)|E(H)|+1$ vertices.  The \textit{leaves} of a tree $H$ are the edges $e$ that satisfy $\deg_1(e) = n_1-1$.  We say that $l(G)$ is the number of leaves of $G$.

For a tree $G$, define the quantity $$\mu(G) := \sum_{v \in V(G) \atop \deg(v) > 2} (\deg(v)-2).$$  We will later need the following relationship between $l(G)$ and $\mu(G)$.

\begin{lemma}
\label{leafcount}
Let $G$ be a tree with at least $2$ edges.  Then $l(G) \geq \mu(G)+2$.
\end{lemma}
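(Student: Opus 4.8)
The plan is to induct on the number of edges $|E(G)|$, tracking both $l(G)$ and $\mu(G)$ under the operation of deleting a leaf. First I would verify the base case: if $G$ has exactly $2$ edges, then $G$ is a "path" of two $n_1$-uniform edges sharing a single vertex, both edges are leaves, so $l(G) = 2$; meanwhile every vertex has degree $1$ except the shared vertex, which has degree $2$, so $\mu(G) = 0$, and the inequality $l(G) = 2 \geq 0 + 2 = \mu(G) + 2$ holds with equality.

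For the inductive step, suppose $|E(G)| \geq 3$ and the claim holds for all smaller trees. Using the tree enumeration $(e_1,\dots,e_{|E(G)|})$ guaranteed by the definition, the last edge $e := e_{|E(G)|}$ is a leaf: it meets $e_1 \cup \cdots \cup e_{|E(G)|-1}$ in exactly one vertex $v$, and its other $n_1 - 1$ vertices have degree $1$. Let $G' = G - e$ be the tree obtained by deleting $e$ together with its $n_1 - 1$ private degree-$1$ vertices; then $G'$ is a tree with $|E(G)| - 1 \geq 2$ edges, so the inductive hypothesis gives $l(G') \geq \mu(G') + 2$. Now I would compare $l(G)$ with $l(G')$ and $\mu(G)$ with $\mu(G')$. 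Writing $d = \deg_G(v)$ (so $\deg_{G'}(v) = d - 1$), deleting $e$ can only change the leaf count and $\mu$ through the status of $v$ and through $e$ itself. The contribution of $v$ to $\mu$ changes by: $\mu(G) - \mu(G') = \big(\max(d-2,0)\big) - \big(\max(d-3,0)\big)$, which is $1$ when $d \geq 3$ and $0$ when $d \leq 2$. For the leaf count, $e$ is a leaf of $G$ contributing $+1$ to $l(G)$ but absent from $G'$; on the other side, an edge $e' \ni v$, $e' \neq e$, might be a leaf of $G'$ but not of $G$, or vice versa — this can happen for at most the (at most one) edge through $v$ that becomes a leaf in $G'$ when $d = 2$ and the only non-$e$ edge at $v$ had all its remaining vertices of degree $1$.

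The heart of the argument is a short case analysis on $d = \deg_G(v)$. When $d \geq 3$: $v$ is a high-degree vertex in both $G$ and $G'$, no edge other than $e$ changes leaf status (every edge at $v$ still contains a vertex, namely $v$, of degree $\geq 2$), so $l(G) = l(G') + 1$ and $\mu(G) = \mu(G') + 1$; the inequality is preserved. When $d = 2$: $v$ has degree $2$ in $G$ and degree $1$ in $G'$, so $\mu(G) = \mu(G')$ (the vertex $v$ contributes $0$ to both, and high-degree vertices are unaffected); meanwhile $e$ leaves the leaf count but the unique other edge $e'$ at $v$ may acquire leaf status in $G'$, so $l(G) \geq l(G') + 1 - 1 = l(G')$ — wait, this needs care, and this is the step I expect to be the main obstacle: I must rule out losing too much. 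In fact when $d = 2$ and $e'$ becomes a leaf of $G'$, that means in $G$ all vertices of $e'$ except $v$ had degree $1$ and $v$ had degree $2$, i.e. $e'$ itself was a leaf of $G$ already (its $n_1 - 1$ non-$v$ vertices have degree $1$), so $e'$'s leaf status does not change; hence again $l(G) = l(G') + 1$ while $\mu(G) = \mu(G')$, giving $l(G) = l(G')+1 \geq \mu(G') + 3 > \mu(G) + 2$. The remaining subtlety is the degenerate possibility that $G'$ has only one edge (excluded since $|E(G)| \geq 3$) or that $v$ has degree $1$ in $G$, which is impossible because $v$ lies in both $e$ and some earlier edge. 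Assembling the cases completes the induction; I would present the $d \geq 3$ and $d = 2$ cases as the two displayed sub-arguments and note equality is achieved exactly on trees all of whose vertices have degree $\leq 2$, i.e. hyperpaths.
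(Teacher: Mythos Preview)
Your approach is essentially the same as the paper's: induct on the number of edges by peeling off (equivalently, adjoining) a terminal leaf edge and comparing $l$ and $\mu$ before and after. The paper builds $G_i$ up from $G_{i-1}$ rather than deleting down, but the content is identical and the case split on $\deg(v_i)$ is the same.

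There is one slip in your $d=2$ analysis. You assert that if $e'$ (the other edge through $v$) is a leaf of $G'$, then ``in $G$ all vertices of $e'$ except $v$ had degree $1$,'' and hence $e'$ was already a leaf of $G$. This is false. In $G'$ the vertex $v$ has degree $1$, so $v$ is one of the $n_1-1$ degree-$1$ vertices witnessing that $e'$ is a leaf of $G'$; the one remaining vertex $w\in e'$ has degree $\geq 2$ in $G'$, and therefore also in $G$. Thus in $G$ the edge $e'$ contains \emph{two} vertices of degree $\geq 2$, namely $v$ and $w$, and is \emph{not} a leaf of $G$. So it can genuinely happen that $l(G) = l(G')$ rather than $l(G')+1$.

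Fortunately this does no damage to the argument: since $\mu(G) = \mu(G')$ when $d=2$, the inequality $l(G) \geq l(G') \geq \mu(G') + 2 = \mu(G) + 2$ still goes through. The paper sidesteps the issue by arguing only that adding $e_i$ creates one new leaf and destroys at most one old one, so $l(G_i) \geq l(G_{i-1})$ always, and then observing separately that whenever $\mu$ goes up by $1$ (i.e.\ $v_i$ already had degree $\geq 2$ in $G_{i-1}$) no old leaf is destroyed and $l$ goes up by exactly $1$. Your claimed strict inequality $l(G) > \mu(G)+2$ in the $d=2$ case, and the concluding remark that equality holds exactly for hyperpaths, should be dropped accordingly.
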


\begin{proof}
Enumerate $E(G) = (e_1, \ldots, e_{|E(G)|})$ such that for $2 \leq i \leq |E(G)|$, $$e_i \cap (e_1 \cup \ldots \cup e_{i-1}) = \{v_i\},$$ and let $G_i$ be the subtree of $G$ with edges $(e_1, \ldots, e_i$).  We prove that the lemma holds for $G_i$ by induction on $i$ for $2 \leq i \leq |E(G)|$, with the $i=2$ case following from $\mu(G_2) = 0$ and $l(G_2) = 2$.  Assume the lemma holds for $G_{i-1}$.

For $3 \leq i \leq |E(G)|$, $\mu(G_i) = \mu(G_{i-1})+1$ if $v_i$ has degree at least $3$ in $G_i$, and otherwise $\mu(G_i) = \mu(G_{i-1})$.  Also, $l(G_i) \geq l(G_{i-1})$ since $e_i$ is a leaf in $G_i$, and at most one leaf of $G_{i-i}$, namely a leaf that contains $v_i$ as a degree $1$ vertex, is not a leaf in $G_i$.  Furthermore, whenever $\mu(H_i) = \mu(G_{i-1})+1$, $v_i$ has degree at least $2$ in $G_{i-1}$ and thus $l(G_i) = l(G_{i-1})+1$.  The lemma follows for $G_i$ by $l(G_i) - l(G_{i-1}) \geq \mu(G_i) - \mu(G_{i-1})$.
\end{proof}

\section{Distinguishing partitions and asymmetric hypergraphs}
\label{corrsection}

We demonstrate a bijection between distinguishing partitions of complete multipartite graphs and asymmetric hypergraphs with prescribed edge sizes.  Using this bijection, we establish the existence or nonexistence of distinguishing partitions by demonstrating the existence or nonexistence of certain asymmetric hypergraphs.  The argument is nearly identical to that given by Ellingham and Schroeder \cite{distpart}.

With $X = K_{n_1, \ldots, n_m}$, let $P$ be a partition of $X$ with parts $P_1, \ldots, P_t$, and we say that $P$ is a \textit{regular partition} of $X$ if $|X_i \cap P_{i'}| \leq 1$  for all $i$ and $i'$.  It is a necessary but not sufficient condition for $P$ to be distinguishing that $P$ is regular.

\begin{definition}
For every regular partition $P$ of $X$ with parts $P_1, \ldots, P_t$, we associate a hypergraph $\tau(P)$ as follows: $V(\tau(P)) = \{P_i, 1 \leq i \leq t\}$, $E(\tau(P)) =$ $\{X_i: 1 \leq i \leq m\},$ and $X_i$ and $P_{i'}$ are incident if $|X_i \cap P_{i'}| = 1$.
\end{definition}

Note that $\tau(P)$ is a hypergraph with $m$ edges with sizes $n_1, \ldots, n_m$ since $X_i$ intersects exactly $n_i$ parts of $P$.

We say that the automorphism group $\mbox{aut}(P)$ is the subgroup of $\mbox{aut}(X)$ consisting of those elements that fix $P$.  The following relationship holds.

\begin{lemma}
If $P$ is a regular partition of $X$, then $\mbox{\upshape aut}(P)$ is isomorphic to $\mbox{\upshape aut}(\tau(P))$.
\end{lemma}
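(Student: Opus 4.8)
The plan is to build an explicit isomorphism $\Phi\colon \mbox{aut}(P) \to \mbox{aut}(\tau(P))$ from the action of $\mbox{aut}(X)$ on the two relevant partitions of $V(X)$, namely $\{X_1,\dots,X_m\}$ and $P=\{P_1,\dots,P_t\}$. The starting observation is that every $\gamma \in \mbox{aut}(X)$ permutes the maximal independent sets of $X$, which are exactly the $X_i$, so $\gamma(X_i)=X_{\pi(i)}$ for a permutation $\pi$ of $\{1,\dots,m\}$ (forcing $n_i=n_{\pi(i)}$); if in addition $\gamma\in\mbox{aut}(P)$ then $\gamma$ also permutes the parts of $P$, say $\gamma(P_{i'})=P_{\rho(i')}$. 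I would define $\Phi(\gamma)$ to be the automorphism of $\tau(P)$ acting as $\rho$ on the vertices $P_{i'}$ and as $\pi$ on the edges $X_i$. Since $\gamma$ is a bijection of $V(X)$, we have $|X_i\cap P_{i'}|=1$ if and only if $|X_{\pi(i)}\cap P_{\rho(i')}|=1$, so $\Phi(\gamma)$ preserves $I(\tau(P))$ and is a hypergraph automorphism; and $\Phi$ is a homomorphism because composition in $\mbox{aut}(X)$ induces composition of the pairs $(\pi,\rho)$.

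Injectivity is where the regularity hypothesis enters. By regularity each vertex $v$ of $X$ is the unique element of $X_i\cap P_{i'}$, where $X_i$ is its part and $P_{i'}$ its $P$-class. If $\Phi(\gamma)$ is trivial then $\gamma$ fixes every $X_i$ and every $P_{i'}$ setwise, hence fixes the singleton $X_i\cap P_{i'}=\{v\}$; as $v$ was arbitrary, $\gamma$ is the identity.

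For surjectivity I would invert the construction. Given $\delta\in\mbox{aut}(\tau(P))$, write it as a permutation $\rho$ of the indices of the vertices $P_{i'}$ together with a permutation $\pi$ of the indices of the edges $X_i$. Define $\gamma\colon V(X)\to V(X)$ by sending the unique vertex of $X_i\cap P_{i'}$, whenever this set is a singleton, to the unique vertex of $X_{\pi(i)}\cap P_{\rho(i')}$; the latter set is a singleton because $\delta$ carries the incidence $(P_{i'},X_i)$ to $(P_{\rho(i')},X_{\pi(i)})$ and because regularity forces an incident pair to meet in exactly one vertex. Using that $\delta^{-1}$ is also an automorphism one checks that $\gamma$ is a bijection; from $\gamma(X_i)\subseteq X_{\pi(i)}$ for all $i$ and the fact that $\{X_i\}$ is a partition one gets $\gamma(X_i)=X_{\pi(i)}$, so $\gamma$ preserves the ``lies in a different part'' adjacency relation and hence $\gamma\in\mbox{aut}(X)$, while $\gamma(P_{i'})=P_{\rho(i')}$ gives $\gamma\in\mbox{aut}(P)$. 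By construction $\Phi(\gamma)=\delta$.

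The only genuinely delicate part is the surjectivity step: one must verify that the vertex map $\gamma$ extracted from $\delta$ is well defined and actually respects the edge relation of $X$ -- not merely the two partitions -- and keep the bookkeeping between the parts $X_i$ and the classes $P_{i'}$ consistent. Everything else is formal, and the whole argument hinges on the regularity of $P$, which is exactly what identifies the vertices of $X$ with the incident vertex--edge pairs of $\tau(P)$.
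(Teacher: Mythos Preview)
Your proposal is correct and follows essentially the same approach as the paper: define the homomorphism by sending $\gamma\in\mbox{aut}(P)$ to the pair of permutations it induces on $\{P_{i'}\}$ and $\{X_i\}$, use regularity to see that these two permutations determine $\gamma$ (injectivity), and construct the inverse by sending the unique point of $X_i\cap P_{i'}$ to the unique point of $X_{\pi(i)}\cap P_{\rho(i')}$ (surjectivity). Your write-up is in fact more careful than the paper's, which leaves the verification that the reconstructed $\gamma$ lies in $\mbox{aut}(X)$ as ``readily checked''; your explicit argument via $\gamma(X_i)=X_{\pi(i)}$ fills that in.
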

\begin{proof}
Let $\tilde{\tau}: \mbox{aut}(P) \rightarrow \mbox{aut}(\tau(P)$ be the group homomorphism induced by $\tau$.  Say that $P_1, \ldots, P_t$ are the parts of $P$.

An automorphism $\sigma \in \mbox{aut}(P)$ induces automorphisms $\sigma_P$ and $\sigma_X$ on the sets $\{P_i\}$ and $\{X_{i'}\}$ respectively.  Then $\sigma$ is uniquely determined by $\sigma_P$ and $\sigma_X$, and in particular $\sigma$ is trivial if and only if $\sigma_P$ and $\sigma_X$ are both trivial.  Thus $\tilde{\tau}$ is injective.

Now let $\sigma' \in \mbox{aut}(\tau(P))$.  Then $\sigma'$ is uniquely determined by incidence-preserving permutations of $\{P_i\}$ and $\{X_{i'}\}$.  Let $\sigma$ be the permutation of $X$ such that if $x \in X$ is the unique vertex contained in $X_i \cap P_{i'}$, then $\sigma(x)$ is the unique vertex contained in $\sigma'(X_i) \cap \sigma'(P_{i'})$.  It is readily checked that in fact $\sigma \in \mbox{aut}(P)$, and thus $\tilde{\tau}$ is surjective.
\end{proof}

\begin{corollary}
\label{distparthypergraph}
There exists a distinguishing partition of $K_{n_1, \ldots, n_m}$ if and only if there exists an asymmetric hypergraph with $m$ edges of sizes $n_1, \ldots, n_m$.
\end{corollary}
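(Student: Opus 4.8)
The plan is to use the assignment $P \mapsto \tau(P)$ together with the preceding lemma ($\mbox{aut}(P) \cong \mbox{aut}(\tau(P))$ for regular $P$) to translate the statement about partitions into the statement about hypergraphs, treating the two implications separately. Note first that, by definition, a regular partition $P$ of $X$ is distinguishing exactly when $\mbox{aut}(P)$ is trivial.

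For the forward direction I would start from a distinguishing partition $P$ of $X = K_{n_1,\ldots,n_m}$. Since every distinguishing partition is regular, $\tau(P)$ is defined, and by the remark following the definition of $\tau$ it is a hypergraph with exactly $m$ edges $X_1,\ldots,X_m$ of sizes $n_1,\ldots,n_m$. By the lemma, $\mbox{aut}(\tau(P)) \cong \mbox{aut}(P)$, which is trivial because $P$ is distinguishing; hence $\tau(P)$ is an asymmetric hypergraph of the required kind.

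For the reverse direction, suppose $H$ is an asymmetric hypergraph with edges $f_1,\ldots,f_m$ of sizes $n_1,\ldots,n_m$. I would first reduce to the case that $H$ has no isolated vertex: any two isolated vertices could be interchanged by a nontrivial automorphism, so $H$ has at most one, and deleting it produces a hypergraph $H'$ with the same edges whose automorphism group embeds into $\mbox{aut}(H)$ and is therefore still trivial. Now build a regular partition $P$ of $K_{n_1,\ldots,n_m}$ with $\tau(P) \cong H$ by ``expanding incidences'': for every incidence $(v, f_i) \in I(H)$ introduce a fresh vertex $x_{v,i}$, put $X_i = \{x_{v,i} : v \in f_i\}$ (a set of size $n_i$), let $X$ be the complete multipartite graph on the disjoint parts $X_1,\ldots,X_m$, and let $P$ consist of one part $P_v = \{x_{v,i} : f_i \ni v\}$ for each $v \in V(H)$. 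Each $P_v$ is nonempty since $H$ has no isolated vertex, so $P$ is a genuine partition of $X$; and $|X_i \cap P_v| \leq 1$ always, so $P$ is regular. One checks directly that $\tau(P) \cong H$ (vertices of $\tau(P)$ correspond to the $P_v$, edges to the $f_i$, and $X_i$ meets $P_v$ iff $v \in f_i$), so by the lemma $\mbox{aut}(P) \cong \mbox{aut}(H)$ is trivial, i.e.\ $P$ is a distinguishing partition of $K_{n_1,\ldots,n_m}$.

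Conceptually this says that $\tau$ restricts to a bijection between distinguishing partitions of $K_{n_1,\ldots,n_m}$ and asymmetric hypergraphs with $m$ labelled edges of sizes $n_1,\ldots,n_m$ having no isolated vertex, and that the isolated-vertex condition is harmless for the existence question. Since the isomorphism $\mbox{aut}(P) \cong \mbox{aut}(\tau(P))$ is already in hand, the only point that needs any care is the bookkeeping in the reverse construction — in particular disposing of a possible isolated vertex of $H$ so that the resulting $P$ is an honest partition; everything else is unwinding definitions.
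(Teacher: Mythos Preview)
Your proof is correct and follows the same approach as the paper, which simply records the corollary as an immediate consequence of the lemma $\mbox{aut}(P)\cong\mbox{aut}(\tau(P))$ without writing out details. You have made explicit the inverse construction to $\tau$ (and correctly disposed of the possible isolated vertex, a point the paper leaves tacit); nothing further is needed.
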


It will be convenient to associate another hypergraph with a regular partition $P$ of $K_{m_1(n_1),m_2(n_2)}$.  Let $\tau'(P)$ be a vertex-labeled hypergraph that contains exactly the vertices and the $n_1$-edges of $\tau(P)$.  Say that the $n_2$-edges of $\tau(P)$ are $X_1, \ldots, X_{m_2}$.  Then the vertex label set of $\tau'(P)$ is $2^{[m_2]}$, and a vertex $v$ in $\tau'(P)$ is labelled with a set $S \subseteq [m_2]$ if $v \in X_i$ exactly when $i \in S$.  Then $\tau'(P)$ is just a different way of encoding $\tau(P)$.

The \textit{weight} $w(v)$ of a vertex $v \in \tau'(P)$ is the cardinality of its label.  The \textit{weight} $w(S)$ of a set of vertices $S$ is the sum of the weights of the vertices in $S$.  The \textit{weight} $w(e)$ or $w(G)$ of an $n_1$-edge $e$ or connected component $G \subset \tau'(P)$ is the weight of the vertex set of $e$ or $G$.  The \textit{value} of $G$ is $w(G) - rm_2|E(G)|$.  Value may be positive or negative.  We have that $n_2 = w(\tau'(P))/m_2$, and thus our strategy in proving the main results is to find an asymmetric labelled hypergraph with $m_1$ $n_1$-edges and maximal weight.  Though weight and value encode the same information, value is useful in that it gives a clear comparision of the weight of a component of $\tau'(P)$ to its asymptotic limit.

\section{Key Lemmas}
\label{generallemmas}

We now present a series of lemmas that provide upper bounds on the weights and values of certain types of components.

\begin{lemma}
\label{vertexcount}
Let $G$ be a connected $n_1$-uniform hypergraph with $n_1|E(G)|/2 + p$ vertices.  Then $G$ contains $2p+\mu(G)$ vertices of degree $1$.  Equivalently, each edge has, on average, $(2p+\mu(G))/|E(G)|$ degree $1$ vertices.
\end{lemma}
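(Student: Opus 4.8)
The plan is to relate the vertex count of a connected $n_1$-uniform hypergraph to its edge count and its degree sequence, and then to invoke the same incremental/inductive bookkeeping used in Lemma \ref{leafcount}. First I would record the basic double-counting identity: since $G$ is a tree-like count is not assumed here (only connectivity), I instead use $\sum_{v \in V(G)} \deg(v) = n_1|E(G)|$, because every edge contributes exactly $n_1$ incidences. Writing $V_1$ for the set of degree-$1$ vertices and splitting the sum gives $|V_1| + \sum_{v : \deg(v) \geq 2} \deg(v) = n_1 |E(G)|$, hence
$$|V_1| = n_1|E(G)| - \sum_{v : \deg(v) \geq 2} \deg(v).$$
Since $|V(G)| = n_1|E(G)|/2 + p$ and $|V(G)| = |V_1| + |\{v : \deg(v) \geq 2\}|$, I can eliminate $|V_1|$ and obtain a relation purely among $p$, $\sum_{v:\deg(v)\ge2}(\deg(v)-2)$, and the number of degree-$\geq 2$ vertices; the point is that $\mu(G) = \sum_{v:\deg(v)>2}(\deg(v)-2) = \sum_{v:\deg(v)\ge2}(\deg(v)-2)$ since degree-$2$ vertices contribute $0$.

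Carrying this out: let $V_1 = \{v:\deg(v)=1\}$, $V_2 = \{v:\deg(v)\ge 2\}$, and $N_2 = |V_2|$. Then $|V(G)| = |V_1| + N_2$ and $\sum_{v\in V_2}\deg(v) = \mu(G) + 2N_2$. Substituting into the incidence identity, $|V_1| + \mu(G) + 2N_2 = n_1|E(G)|$, so $|V_1| = n_1|E(G)| - \mu(G) - 2N_2$. Also $N_2 = |V(G)| - |V_1| = n_1|E(G)|/2 + p - |V_1|$, and substituting this back yields $|V_1| = n_1|E(G)| - \mu(G) - 2(n_1|E(G)|/2 + p - |V_1|) = n_1|E(G)| - \mu(G) - n_1|E(G)| - 2p + 2|V_1|$, i.e. $-|V_1| = -\mu(G) - 2p$, so $|V_1| = \mu(G) + 2p$, exactly as claimed. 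The "equivalently" clause is then immediate by dividing by $|E(G)|$.

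There is essentially no obstacle here beyond making sure $\mu(G)$ as defined (sum over $\deg(v)>2$) equals $\sum_{v:\deg(v)\ge 2}(\deg(v)-2)$, which holds because a degree-$2$ vertex contributes a zero term; and beyond confirming the two expressions for $|V(G)|$ genuinely partition the vertex set (there are no degree-$0$ vertices since $G$ is connected with at least one edge). One could alternatively present this via the same edge-by-edge induction as Lemma \ref{leafcount}, tracking how $p$ and $\mu$ change as each new edge $e_i$ is attached along a single shared vertex, but the direct double-counting argument above is cleaner and avoids case analysis. I would therefore write the short algebraic derivation as the proof.
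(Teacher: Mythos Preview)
Your proof is correct and follows essentially the same approach as the paper: both arguments double-count vertex-edge incidences, writing $\sum_v \deg(v) = n_1|E(G)|$ and then splitting the left side according to whether $\deg(v)=1$ or $\deg(v)\ge 2$, using $\mu(G)$ to account for the excess beyond $2$. Your version spells out the algebra with a few more named intermediate quantities, but the underlying computation is identical.
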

\begin{proof}
There are $n_1|E(G)|$ pairs of the form $(v,e)$, where $v$ is a vertex, $e$ an edge, and $v \in e$.  The number of such pairs $(v,e)$ is also $$\deg_1(G) + 2(n_1|E(G)|/2 + p - \deg_1(G)) + \mu(G),$$ or $n_1|E(G)| + 2p - \deg_1(G) + \mu(G)$.  Thus $\deg_1(G) = 2p+\mu(G)$.
\end{proof}

\begin{lemma}
\label{weightint}
Suppose that $\tau(P)$ is asymmetric, and let $S$ be the set of degree $1$ vertices in an edge of $\tau'(P)$.  Then $|S| \leq 2^{m_2}$.  Define nonnegative values $j'$ and $k'$ such that $|S| = {m_2 \choose 0} + \cdots + {m_2 \choose j'} + k'$ with either $0 \leq k' < {m_2 \choose j'+1}$ or $k'=0$ and $j' = m_2$.  Then
$$w(S) \leq \sum_{i=0}^{j'} (m_2-i){m_2 \choose i} + k'(m_2 - j' -1).$$
\end{lemma}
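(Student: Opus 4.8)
The plan is to reduce both claims to a single combinatorial observation: since $\tau(P)$ is asymmetric, no two vertices of $\tau'(P)$ can carry the same label, so $S$ consists of vertices with pairwise distinct labels drawn from $2^{[m_2]}$. The bound $|S| \le 2^{m_2}$ is then immediate, because $|2^{[m_2]}| = 2^{m_2}$. For the weight bound, recall that $w(v)$ is the cardinality of the label of $v$, so $w(S) = \sum_{v \in S} w(v)$ is the sum of $|S|$ distinct cardinalities-with-multiplicity, i.e.\ the sum of the sizes of $|S|$ distinct subsets of $[m_2]$. To maximize $w(S)$ over all choices of $|S|$ distinct subsets, one should greedily take the subsets of largest size first: all ${m_2 \choose m_2}$ subsets of size $m_2$, then all ${m_2 \choose m_2-1}$ of size $m_2 - 1$, and so on.

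Here I would be careful about the indexing in the statement. The decomposition $|S| = \sum_{i=0}^{j'} {m_2 \choose i} + k'$ with $0 \le k' < {m_2 \choose j'+1}$ means we use up all subsets of the $j'+1$ \emph{smallest} possible cardinalities $0, 1, \ldots, j'$ and then $k'$ subsets of the next size $j'+1$; but of course for the weight \emph{upper} bound we take the $|S|$ subsets of largest cardinality. By the symmetry ${m_2 \choose i} = {m_2 \choose m_2 - i}$, counting the ${m_2 \choose 0}, {m_2 \choose 1}, \ldots, {m_2 \choose j'}$ \emph{largest} subsets is the same as counting the ${m_2 \choose m_2}, {m_2 \choose m_2-1}, \ldots, {m_2 \choose m_2 - j'}$ subsets; the subsets of size $m_2 - i$ contribute weight $(m_2 - i){m_2 \choose i}$ each block, and the final $k'$ subsets of size $m_2 - j' - 1$ contribute $k'(m_2 - j' - 1)$. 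Summing gives exactly
$$w(S) \le \sum_{i=0}^{j'} (m_2 - i){m_2 \choose i} + k'(m_2 - j' - 1),$$
which is the claimed inequality. The rearrangement-type argument that greedily choosing the largest-cardinality subsets is optimal is the routine step: if some subset of size $a$ is in $S$ while some subset of size $b > a$ is not, swapping them strictly increases $w(S)$ without changing $|S|$.

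The only real content, and the step I'd flag as the crux, is justifying that the labels of vertices in $S$ are pairwise distinct — equivalently, that $\tau'(P)$ has no two vertices with the same label. This is where asymmetry of $\tau(P)$ enters: if $v \ne v'$ had the same label $S \subseteq [m_2]$, then $v$ and $v'$ would be incident to exactly the same $n_2$-edges $X_i$; since both are degree-$1$ vertices of (the same or different) $n_1$-edge, the transposition $(v\, v')$ together with the identity on all other vertices and on all edges would be a nontrivial automorphism of $\tau(P)$ — one must check it preserves incidence with the $n_1$-edges as well, which holds because $v, v'$ have degree $1$ there (so at worst we also transpose the two $n_1$-edges containing them, or fix the single one if they lie in the same edge). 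This contradicts asymmetry. Once distinctness of labels is in hand, the rest is the elementary extremal count above. I should double-check the boundary case $k' = 0,\ j' = m_2$, i.e.\ $|S| = 2^{m_2}$: then $S$ contains one vertex of every label, $w(S) = \sum_{i=0}^{m_2}(m_2 - i){m_2 \choose i} = m_2 2^{m_2 - 1}$, consistent with the formula since the $k'$ term vanishes.
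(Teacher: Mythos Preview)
Your approach is essentially the paper's: the key observation is that any two vertices of $S$ can be swapped by an automorphism of the underlying (unlabelled) hypergraph, so their labels must differ, and then the weight bound is the obvious greedy count. The extremal argument and the boundary check are fine.

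However, one claim you make is false and should be excised. You write that distinctness of labels in $S$ is ``equivalently, that $\tau'(P)$ has no two vertices with the same label,'' and then argue that if degree-$1$ vertices $v,v'$ lie in \emph{different} $n_1$-edges $e,e'$ with the same label, then transposing $v\leftrightarrow v'$ and $e\leftrightarrow e'$ is an automorphism. It is not: the other vertices of $e$ would have to coincide with those of $e'$ for incidence to be preserved, and in general they do not. Indeed, $\tau'(P)$ can and typically does have many vertices sharing a label (for instance, degree-$2$ vertices labelled $[m_2]$ in distinct edges). What \emph{is} true, and all that the lemma needs, is that $v_1,v_2\in S$ lie in the \emph{same} edge and have degree $1$, so the transposition $(v_1\,v_2)$ fixing all edges is already an automorphism of the unlabelled hypergraph. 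Restrict your argument to that case and drop the global distinctness claim; the rest of your proof then matches the paper's exactly.
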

\begin{proof}
For all $v_1, v_2 \in S$, there is an automorphism of the underlying unlabelled hypergraph of $\tau'(P)$ that switches $v_1$ and $v_2$ and fixes all other vertices.  Thus all vertices in $S$ must have different labels in $\tau'(P)$, which implies that $|S| \leq 2^{m_2}$.  The lemma follows from the fact that $S$ contains at most ${m_2 \choose i}$ vertices with a label of cardinality $m_2-i$.
\end{proof}

Let $w_{|S|}$ denote the upper bound on $w(S)$ in Lemma \ref{weightint}.  Now suppose that $\tau'(P)$ is asymmetric and $G$ is a component of $\tau'(P)$.  A \textit{defect} in $G$ is one of the following.  A \textit{defective vertex} is a vertex $v$ with degree at least $2$ and weight less than $m_2$, counted with multiplicity $d(v) = m_2-w(v)$.  A \textit{defective} edge is an edge $e$ with set $S$ of degree $1$ vertices with collective weight less than $w_{|S|}$, counted with multiplicity $d(e) = w_{|S|}-w(S)$.  The number of defects in $G$ is denoted by $d(G)$.

\begin{lemma}
\label{weightcount1}
Let $G$ be a connected component of $\tau'(P)$.  If $\tau(P)$ is asymmetric, then $$w(G) \leq m_2\deg_2^+(G) + \sum_{e \in E(G)} w_{\deg_1(e)} - d(G).$$
\end{lemma}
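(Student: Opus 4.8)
The plan is to compute $w(G)$ by summing vertex weights, after partitioning $V(G)$ according to whether a vertex has degree $1$ or degree at least $2$, and then to bound the two groups separately: the degree-$\geq 2$ vertices by the trivial bound $w(v)\le m_2$, and the degree-$1$ vertices, grouped by the edge containing them, by Lemma \ref{weightint}. First I would write $V(G)$ as the disjoint union of the set $D$ of vertices of degree at least $2$ (so $|D|=\deg_2^+(G)$) together with, for each $e\in E(G)$, the set $S_e$ of degree-$1$ vertices lying in $e$. Since a degree-$1$ vertex lies in exactly one edge, this is genuinely a partition of $V(G)$, giving
\[
w(G)\;=\;\sum_{v\in D}w(v)\;+\;\sum_{e\in E(G)}w(S_e).
\]

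Next I would handle the two sums. For $v\in D$, the weight is the size of a subset of $[m_2]$, so $m_2-w(v)\ge 0$, and this is precisely the defect multiplicity $d(v)$ (which is $0$ unless $v$ is a defective vertex); hence $\sum_{v\in D}w(v)=m_2\deg_2^+(G)-\sum_{v\in D}d(v)$. For the degree-$1$ vertices, I would invoke the hypothesis that $\tau(P)$ is asymmetric so that Lemma \ref{weightint} applies to each $S_e$: it yields $\deg_1(e)=|S_e|\le 2^{m_2}$ (so that $w_{\deg_1(e)}$ is well defined) and $w(S_e)\le w_{\deg_1(e)}$. Thus $w_{\deg_1(e)}-w(S_e)\ge 0$ is exactly the edge-defect multiplicity $d(e)$, and $\sum_{e}w(S_e)=\sum_{e}w_{\deg_1(e)}-\sum_e d(e)$. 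Substituting both expressions into the display above and using $d(G)=\sum_{v\in D}d(v)+\sum_{e\in E(G)}d(e)$ (defects of positive multiplicity are exactly the defective vertices and edges) gives the asserted inequality — in fact with equality.

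Every step here is forced, so there is no substantial obstacle; the only point needing care is to confirm that the two notions of defect introduced just before the lemma account for \emph{all} the slack, i.e.\ that $m_2-w(v)$ and $w_{\deg_1(e)}-w(S_e)$ are nonnegative and sum to $d(G)$. The first is immediate from $w(v)\le m_2$, and the second is exactly the content of Lemma \ref{weightint}, which is why the asymmetry hypothesis on $\tau(P)$ is needed.
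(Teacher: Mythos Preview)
Your proof is correct and follows essentially the same approach as the paper's: partition the vertices by degree, express the weight of the degree-$\geq 2$ vertices as $m_2\deg_2^+(G)$ minus the vertex defects, express the weight of the degree-$1$ vertices (grouped by edge) as $\sum_e w_{\deg_1(e)}$ minus the edge defects via Lemma~\ref{weightint}, and sum. Your observation that the argument in fact yields equality is also correct.
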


\begin{proof}
Every vertex has weight at most $m_2$, and a defective vertex $v$ with degree at least $2$ has weight $m_2-d(v)$.  The set of degree $1$ vertices in an edge $e$ has weight $w_{\deg_1(e)}$ if $e$ is not defective, and otherwise weight $w_{\deg_1(e)} - d(e)$.  The lemma follows by adding over all vertices.
\end{proof}

If $G$ contains $n_1|E(G)|/2 + p$ vertices, then write $$2p+\mu(G) = b \left\lfloor \frac{2p+\mu(G)}{|E(G)|} \right\rfloor + b' \left\lceil\frac{2p+\mu(G)}{|E(G)|} \right\rceil$$ with nonnegative $b+b' = |E(G)|$.  The following lemma states that the weight of $G$ is maximized when all edges have about the same number of degree $1$ vertices.

\begin{lemma}
\label{weightbound}
With all quantities as above, $$w(G) \leq m_2n_1|E(G)|/2 - m_2p - m_2\mu(G) + bw_{\left\lfloor \frac{2p+\mu(G)}{|E(G)|} \right\rfloor} + b'w_{\left\lceil \frac{2p+\mu(G)}{|E(G)|} \right\rceil} - d(G).$$
\end{lemma}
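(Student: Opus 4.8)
The plan is to combine the two previous lemmas with a convexity argument on the function $|S| \mapsto w_{|S|}$.

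Lemma \ref{weightcount1} gives
$$w(G) \leq m_2\deg_2^+(G) + \sum_{e \in E(G)} w_{\deg_1(e)} - d(G),$$
so it suffices to bound the two main terms. First I would rewrite $m_2 \deg_2^+(G)$: by Lemma \ref{vertexcount}, $G$ has exactly $2p + \mu(G)$ vertices of degree $1$, and since $|V(G)| = n_1|E(G)|/2 + p$, the number of vertices of degree at least $2$ is $\deg_2^+(G) = n_1|E(G)|/2 + p - (2p+\mu(G)) = n_1|E(G)|/2 - p - \mu(G)$. Hence $m_2\deg_2^+(G) = m_2 n_1 |E(G)|/2 - m_2 p - m_2 \mu(G)$, which is exactly the first three terms in the claimed bound. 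So the content of the lemma reduces to showing
$$\sum_{e \in E(G)} w_{\deg_1(e)} \leq b\, w_{\lfloor (2p+\mu(G))/|E(G)| \rfloor} + b'\, w_{\lceil (2p+\mu(G))/|E(G)| \rceil}.$$

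The sum $\sum_{e\in E(G)} \deg_1(e)$ counts each degree-$1$ vertex once, so it equals $2p + \mu(G)$ by Lemma \ref{vertexcount}; there are $|E(G)| = b + b'$ terms. Now the key structural fact I would establish is that the sequence $(w_s)_{s \geq 0}$ defined in Lemma \ref{weightint} is \emph{concave}, i.e.\ $w_{s+1} - w_s$ is nonincreasing in $s$. Indeed, from the explicit description, as $s$ increases by $1$ we add one more vertex with the largest available label cardinality; the label cardinalities used are $m_2, m_2, m_2, m_2-1, m_2-1, \ldots$ — each value $m_2 - i$ repeated $\binom{m_2}{i}$ times — which is a nonincreasing sequence, so the successive differences $w_{s+1} - w_s$ are nonincreasing. (One should also note $w_s$ is defined only for $0 \le s \le 2^{m_2}$, but since each $\deg_1(e) \le n_1 \le 2^{m_2}$ — using $n_1 \le 2^{m_2}$, which holds in the regime of Theorem \ref{maintheorem} — this is not an issue; alternatively extend $w_s$ concavely.) Given concavity, the standard majorization/Karamata inequality says that among all ways of writing $2p+\mu(G)$ as an ordered sum of $|E(G)|$ nonnegative integers, $\sum w_{\deg_1(e)}$ is maximized when the parts are as equal as possible — that is, all parts equal to $\lfloor (2p+\mu(G))/|E(G)| \rfloor$ or $\lceil (2p+\mu(G))/|E(G)| \rceil$, with exactly $b$ of the former and $b'$ of the latter by the definition of $b, b'$. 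Subtracting $d(G) \geq 0$ and combining with the rewritten first term yields the claim.

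The main obstacle I anticipate is purely bookkeeping rather than conceptual: verifying that the two-point recurrence defining $b$ and $b'$ really does describe the "as equal as possible" partition (it does, since $b\lfloor q \rfloor + b'\lceil q \rceil = (b+b')\lfloor q\rfloor + b' = |E(G)|\lfloor q\rfloor + b'$ must equal $2p+\mu(G)$, forcing $b'$ to be the residue and $b = |E(G)| - b'$), and being careful that the discrete concavity argument is applied over the correct range of indices. A clean way to avoid edge-case worries is to phrase the maximization as: for a concave sequence $w$, if $x_1 + \cdots + x_N$ is fixed with $x_i \geq 0$ integers, then $\sum w_{x_i}$ is maximized at a near-balanced assignment — a one-line exchange argument (if some $x_i \geq x_{i'} + 2$, replacing them by $x_i - 1, x_{i'}+1$ does not decrease the sum by concavity). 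This exchange argument is the crux and is short; everything else is substitution.
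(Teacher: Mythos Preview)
Your proposal is correct and follows essentially the same approach as the paper: apply Lemma~\ref{weightcount1}, rewrite $m_2\deg_2^+(G)$ via Lemma~\ref{vertexcount}, and then use concavity of $y \mapsto w_y$ to maximize $\sum_e w_{\deg_1(e)}$ subject to the fixed sum $2p+\mu(G)$. One small remark on your parenthetical: the reason $\deg_1(e) \le 2^{m_2}$ is not that $n_1 \le 2^{m_2}$ (which can fail when $j = \lfloor (m_2-1)/2 \rfloor$), but rather that asymmetry forces distinct labels on the degree-$1$ vertices of $e$, as already noted in Lemma~\ref{weightint}.
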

\begin{proof}
By Lemma \ref{vertexcount}, $G$ has $n_1|E(G)|/2 - p - \mu(G)$ vertices of degree at least $2$.  Then by Lemma \ref{weightcount1}, $$w(G) \leq m_2n_1|E(G)|/2 - m_2p - m_2\mu(G) + \sum_{e \in E(G)} w_{\deg_1(e)} - d(G).$$ The expression $w_y$ is concave in $y$, meaning that for all $y$, $w_y - w_{y-1} \geq w_{y+1} - w_y$.  Thus, given a set of values $\{y_i\}$ such that $\sum_i y_i = 2p+\mu(G)$, $\sum_{i=1}^t w_{y_i}$ is maximal when $b$ of the $y_i$ are equal to $\left\lfloor \frac{2p+\mu(G)}{|E(G)|} \right\rfloor$ and $b'$ of the $y_i$ are $\left\lceil\frac{2p+\mu(G)}{|E(G)|} \right\rceil$.  The lemma follows.
\end{proof}

We now look to maximize the weight of $G$ by considering the total number of vertices of a given weight.  In particular, $G$ contains at most $|E(G)|{m_2 \choose i}$ degree $1$ vertices with weight $m_2-i$, since each each edge contains at most ${m_2 \choose i}$ such vertices.  Choose values $j^*$ and $k^*$ such that $$2p+\mu(G) = |E(G)|{m_2 \choose 0} + \cdots + |E(G)|{m_2 \choose j^*} + k^*$$ with $j^* \geq -1$ and $0 \leq k^* < |E(G)|{m_2 \choose j^*+1}$.

\begin{lemma}
\label{weightboundglobal}
With all quantities as above, $w(G) \leq$ $$m_2\left(\frac{n_1|E(G)|}{2} - p - \mu(G)\right) + \sum_{i=0}^{j^*}|E(G)|(m_2-i){m_2 \choose i} + (m_2-j^*-1)k^* - d(G).$$
\end{lemma}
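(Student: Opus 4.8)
The plan is to assemble the bound from Lemma \ref{vertexcount} and Lemma \ref{weightcount1}, and then to replace the edge-by-edge sum $\sum_{e}w_{\deg_1(e)}$ by a single global estimate on the weight contributed by the degree-$1$ vertices of $G$. Whereas Lemma \ref{weightbound} optimizes how the degree-$1$ vertices are distributed \emph{among} the edges, here we instead optimize how \emph{weights} are distributed over the whole set of degree-$1$ vertices, subject only to the per-weight cap coming from the fact that each edge contains at most ${m_2 \choose i}$ degree-$1$ vertices of weight $m_2-i$.

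In detail, I would proceed as follows. First, by Lemma \ref{vertexcount}, $G$ has $2p+\mu(G)$ vertices of degree $1$, hence $\deg_2^+(G)=n_1|E(G)|/2-p-\mu(G)$ vertices of degree at least $2$. Lemma \ref{weightcount1} then gives
$$w(G)\le m_2\deg_2^+(G)+\sum_{e\in E(G)}w_{\deg_1(e)}-d(G),$$
so it suffices to show
$$\sum_{e\in E(G)}w_{\deg_1(e)}\le \sum_{i=0}^{j^*}|E(G)|(m_2-i){m_2 \choose i}+(m_2-j^*-1)k^*.$$
For this, recall that $w_n$ from Lemma \ref{weightint} is realized by a greedy ``heaviest label first'' assignment of distinct labels to $n$ vertices, which uses at most ${m_2 \choose i}$ vertices of weight $m_2-i$ for every $i$. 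Concatenating the witnessing assignments over all $e\in E(G)$ gives an assignment of weights to the $\sum_{e}\deg_1(e)=2p+\mu(G)$ degree-$1$ vertices of $G$ using at most $|E(G)|{m_2 \choose i}$ vertices of weight $m_2-i$. Since the level weights $m_2-i$ are decreasing in $i$, among all such assignments the total weight is maximized by filling the levels greedily from the top, and by the defining relation for $j^*$ and $k^*$ this maximum is exactly the right-hand side above. Substituting $\deg_2^+(G)=n_1|E(G)|/2-p-\mu(G)$ then yields the claim.

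The only step that needs genuine care is the passage from the per-edge optima to a single global configuration: one must check that concatenating the edge-level witnesses respects the global caps (immediate, since each edge contributes at most ${m_2 \choose i}$ vertices of weight $m_2-i$) and that the degree-$1$ vertices are partitioned by the edges, so that $\sum_{e}\deg_1(e)=2p+\mu(G)$ (immediate from Lemma \ref{vertexcount}, since a degree-$1$ vertex lies in a unique edge). It is also worth noting that $j^*\le m_2$ is forced, since $2p+\mu(G)=\sum_{e}\deg_1(e)\le|E(G)|2^{m_2}$ by Lemma \ref{weightint}, so the displayed formula is meaningful. Apart from these bookkeeping points, the argument is the same greedy/concavity reasoning already used in Lemma \ref{weightbound}, carried out globally rather than within a single edge.
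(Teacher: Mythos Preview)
Your argument is correct and follows essentially the same route as the paper. The paper packages the step you call ``concatenating the witnessing assignments'' as the construction of an auxiliary labelled hypergraph $G'$ (relabel degree-$\ge 2$ vertices by $[m_2]$ and make every edge nondefective), then observes $w(G')=w(G)+d(G)$ and bounds $w(G')$ via the same per-level caps $|E(G)|\binom{m_2}{i}$; your version simply routes through Lemma~\ref{weightcount1} and bounds $\sum_e w_{\deg_1(e)}$ directly, which is the same computation without naming $G'$.
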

\begin{proof}
Let $G'$ be a (not necessarily asymmetric) hypergraph constructed from $G$ by giving every vertex of degree at least $2$ the label $[m_2]$ and assigning a label to every degree $1$ vertex such that all edges of $G'$ are nondefective and have distinct labels among the degree $1$ vertices.  Then $G'$ has $(n_1|E(G)|/2 - p - \mu(G))$ vertices of degree at least $2$, each of which has weight $m_2$, and at most $|E(G)|{m_2 \choose i}$ degree $1$ vertices of weight $m_2-i$ for $0 \leq i \leq j^*$.  The result follows by $w(G') = w(G)+d(G)$.
\end{proof}

We consider the upper bound of Lemma \ref{weightboundglobal} to be a function $w_{\max}(p,\mu(G),d)$, with the quantities $m_2$ and $|E(G)|$ considered to be fixed.  We note that $$w_{\max}(p,\mu(G),d) > w_{\max}(p,\mu(G),d+1),$$ while $$w_{\max}(p,\mu(G),d) \geq w_{\max}(p,\mu(G)+1,d),$$ with equality exactly when $j^*=-1$.

Now we consider $\mu(G) = d = 0$, and the upper bound of Lemma \ref{weightboundglobal} is a function $w_{\max}(p)$.  The effect of replacing $p$ by $p+1$ is equivalent, numerically, to replacing a vertex with weight $m_2$ by two vertices, one of weight $m_2-j^*-1$ and the other of weight either $m_2-j^*-1$ or $m_2-j^*-2$.  Thus the function $w_{\max}(p)$ is weakly unimodal in $p$ and achieves a maximum when $j^* = \lfloor (m_2 - 1)/2 \rfloor$ and $k^*=0$ or $1$.  Then $2p = \sum_{i=0}^{\lfloor (m_2 - 1)/2 \rfloor}|E(G)|{m_2 \choose i}+(0$ or $1)$, and we have by Lemma \ref{weightboundglobal} that 
\begin{equation}
\label{wGbound}
w(G) \leq |E(G)|\left(\frac{m_2n_1}{2} - \frac{m_2}{2} \sum_{i=0}^{\lfloor \frac{m_2 - 1}{2} \rfloor}{m_2 \choose i} + \sum_{i=0}^{\lfloor \frac{m_2 - 1}{2} \rfloor}(m_2-i){m_2 \choose i} \right).
\end{equation}
  Thus the following holds.

\begin{corollary}
\label{valuecor}
Let all quantities be as above. 
\begin{enumerate}
\item If $j = \lfloor (m_2 - 1)/2 \rfloor$, then $w(G) \leq m_2r|E(G)|$.
\item If $j < \lfloor (m_2 - 1)/2 \rfloor$ and $p \leq |E(G)|(\frac{n_1}{2}-1)$, then $w(G) \leq m_2r|E(G)|$.
\item $G$ has positive value only if $j < \lfloor (m_2 - 1)/2 \rfloor$ and $G$ is a tree.  Then if $G$ has $d$ defects, $v(G) \leq m_2 - 2j - d$.
\end{enumerate}
\end{corollary}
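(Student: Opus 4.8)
The plan is to obtain parts (1) and (2) as essentially arithmetic consequences of the weight bounds already in hand, and then to bootstrap part (3) from them together with one extra structural fact about trees. For part (1) we are exactly in the regime of \eqref{wGbound}, whose right‑hand side I claim equals $m_2 r|E(G)|$: substituting $n_1 = 2 + \sum_{i=0}^{j}\binom{m_2}{i} + k$ and the definition $r = 1 + \sum_{i=0}^{j}\frac{m_2-i}{m_2}\binom{m_2}{i} + \frac{1}{2}k$ (the case $j = \lfloor(m_2-1)/2\rfloor$) makes this a one‑line computation, so $w(G)\le m_2 r|E(G)|$. For part (2), put $p_0:=|E(G)|(n_1/2-1)$; since $n_1-2 = \sum_{i=0}^{j}\binom{m_2}{i}+k$ with $0\le k<\binom{m_2}{j+1}$, the indices $j^\ast,k^\ast$ that Lemma \ref{weightboundglobal} attaches to $p_0$ (taking $\mu=0$) are $j^\ast=j$ and $k^\ast=|E(G)|k$, and evaluating $w_{\max}(p_0,0,0)$ and substituting the ($j<\lfloor(m_2-1)/2\rfloor$) formula for $r$ gives $w_{\max}(p_0,0,0)=m_2 r|E(G)|$. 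One then uses the unimodality of $w_{\max}(\cdot,0,0)$ recorded in the text: the successive increments $w_{\max}(p+1,0,0)-w_{\max}(p,0,0)$, equal to $m_2-2j^\ast-2$ or $m_2-2j^\ast-3$, are nonnegative for all $p\le p_0$ because the relevant $j^\ast$ never exceeds $j<\lfloor(m_2-1)/2\rfloor$; hence $w(G)\le w_{\max}(p,\mu(G),d(G))\le w_{\max}(p,0,0)\le w_{\max}(p_0,0,0)=m_2 r|E(G)|$ whenever $p\le p_0$, using that $w_{\max}$ is weakly decreasing in $\mu$ and strictly decreasing in $d$.

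For part (3), the first two assertions drop out of (1) and (2): if $j=\lfloor(m_2-1)/2\rfloor$ then $v(G)=w(G)-m_2 r|E(G)|\le 0$ by (1), and if $j<\lfloor(m_2-1)/2\rfloor$ with $p\le p_0$ then $v(G)\le 0$ by (2); so a component of positive value has $j<\lfloor(m_2-1)/2\rfloor$ and $p>p_0$, the latter saying that $G$ has more than $(n_1-1)|E(G)|$ vertices, which by the edge‑enumeration characterization of $n_1$‑uniform trees (a connected $n_1$‑uniform hypergraph with $|E(G)|$ edges has at most $(n_1-1)|E(G)|+1$ vertices, with equality only for trees) forces $G$ to be a tree. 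Being a tree then pins down $p=p_0+1$. I would feed this into Lemma \ref{weightboundglobal} with $\mu=\mu(G)$ and $d=d(G)$; the new ingredient is the estimate $\mu(G)\le|E(G)|-2$, which for $|E(G)|\ge 2$ follows from Lemma \ref{leafcount} ($l(G)\ge\mu(G)+2$) and the trivial $l(G)\le|E(G)|$, while the single‑edge case $|E(G)|=1$ is handled directly from Lemma \ref{weightint}. Since $2p+\mu(G)=2p_0+2+\mu(G)$ and $|E(G)|k+2+\mu(G)\le|E(G)|(k+1)\le|E(G)|\binom{m_2}{j+1}$, all $\mu(G)+2$ of the degree‑$1$ slots counted beyond those already present at $p_0$ lie in the next weight class $m_2-j-1$; collecting terms converts the bound of Lemma \ref{weightboundglobal} into $w(G)\le m_2 r|E(G)| + (m_2-2j-2)-\mu(G)(j+1)-d$, whence $v(G)\le m_2-2j-2-\mu(G)(j+1)-d\le m_2-2j-d$ because $\mu(G)\ge 0$ and $j\ge -1$.

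The step I expect to be most delicate is the bookkeeping in part (3): one must track how the pair $(j^\ast,k^\ast)$ produced by $2p+\mu(G)$ sits relative to the pair $(j,k)$ defining $r$, and check carefully that the estimate $\mu(G)\le|E(G)|-2$ really does keep $j^\ast$ within $\{j,j+1\}$, so that the degree‑$1$ contribution collapses to the clean closed form used above; the boundary case $|E(G)|=1$, where this inequality on $\mu(G)$ degenerates, likewise needs its own short argument from Lemma \ref{weightint}.
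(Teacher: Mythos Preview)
Your argument is correct and follows the same line as the paper's proof, which is extremely terse (three sentences pointing to Equation~(\ref{wGbound}) for Part~1, to the unimodality of $w_{\max}$ together with the definition of $r$ for Part~2, and declaring Part~3 a consequence of Parts~1 and~2). You have simply made explicit the computations the paper leaves to the reader, including the identification $w_{\max}(p_0,0,0)=m_2r|E(G)|$ and the step from $p_0$ to $p_0+1$.

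One small simplification in Part~3: you do not actually need Lemma~\ref{leafcount}, the bound $\mu(G)\le |E(G)|-2$, or the separate treatment of $|E(G)|=1$. Since the paper already records that $w_{\max}(p,\mu,d)$ is weakly decreasing in $\mu$ and equals $w_{\max}(p,0,0)-d$, you may pass immediately to
\[
w(G)\le w_{\max}(p_0+1,\mu(G),d)\le w_{\max}(p_0+1,0,0)-d,
\]
and the single increment $w_{\max}(p_0+1,0,0)-w_{\max}(p_0,0,0)\le m_2-2j-2$ (it is exactly $m_2-2j-2$ except in the lone case $|E(G)|=1$, $k=\binom{m_2}{j+1}-1$, where it is $m_2-2j-3$) already yields $v(G)\le m_2-2j-2-d\le m_2-2j-d$. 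Your route through $\mu(G)$ gives the sharper inequality $v(G)\le m_2-2j-2-(j+1)\mu(G)-d$, which is not needed for the corollary but is consistent with the finer bound the paper later proves in Lemma~\ref{treevalue}.
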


\begin{proof}
Part 1 follows by Equation (\ref{wGbound}) and the definition of $r$.  Part 2 follows from the monotonicity of $w_{\max}$ and the definition of $r$.  Part 3 is a consequence of Parts 1 and 2.
\end{proof}

We now focus on the particular case that $G$ is a tree and $k=0$.  Suppose that $G$ has $l$ leaves, and by Lemma \ref{leafcount}, $l \geq \mu(G)+2$.  Then $j^* = j$ and $k^* = 2+\mu(G)$, and $\sum_{e \in E(G)} w_{\deg_1(e)} \leq$ $$\sum_{i=0}^{j}|E(G)|(m_2-i){m_2 \choose i}+ (\mu(G)+2)(m_2-j-1).$$ This bound can be attained if $G$ contains $|E(G)|{m_2 \choose i}$ degree $1$ vertices of weight $m_2-i$ for $0 \leq i \leq j$ and $\mu(G)+2$ degree $1$ vertices of weight $m_2-j-1$.  However, every leaf of $G$ contains a vertex of weight at most $m_2-j-1$, and thus in fact $\sum_{e \in E(G)} w_{\deg_1(e)} \leq$ $$\sum_{i=0}^{j}|E(G)|(m_2-i){m_2 \choose i}+ (\mu(G)+2)(m_2-j-1) - (l-\mu(G)-2).$$  Since $G$ has $|E(G)| - 1 - \mu(G)$ vertices of degree $2$ or more, $$w(G) \leq m_2|E(G)| + \sum_{i=0}^{j}|E(G)|(m_2-i){m_2 \choose i}- \mu(G)j+m_2-2j-l.$$

Finally, if we allow that $G$ might have $d$ defects, then we conclude the following.

\begin{lemma}
\label{treevalue}
With all quantities as above, $v(G) \leq m_2 - 2j - l - j\mu(G) - d$.
\end{lemma}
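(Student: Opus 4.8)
The plan is to take the bound on $w(G)$ derived in the paragraph immediately preceding the statement and translate it into a bound on $v(G) = w(G) - rm_2|E(G)|$ using the definition of $r$ in the case $k=0$, $j < \lfloor(m_2-1)/2\rfloor$. The displayed inequality we have just established reads
$$w(G) \leq m_2|E(G)| + \sum_{i=0}^{j}|E(G)|(m_2-i){m_2 \choose i} - \mu(G)j + m_2 - 2j - l,$$
valid when $G$ is a tree and $k=0$ and $G$ has no defects; the remaining task is to subtract $rm_2|E(G)|$ and to insert the correction $-d$ coming from the defects.

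First I would recall that for $k=0$ and $j < \lfloor(m_2-1)/2\rfloor$ the value of $r$ is $r = 1 + \sum_{i=0}^{j}\frac{m_2-i}{m_2}{m_2 \choose i}$, so that
$$rm_2|E(G)| = m_2|E(G)| + \sum_{i=0}^{j}|E(G)|(m_2-i){m_2 \choose i}.$$
Subtracting this from the bound on $w(G)$ makes the two $|E(G)|$-proportional groups of terms cancel exactly, leaving
$$v(G) \leq -\mu(G)j + m_2 - 2j - l = m_2 - 2j - l - j\mu(G)$$
in the defect-free case. Next I would account for defects: by Lemma \ref{weightcount1} (or directly from the definition of a defect, each defect lowering $w(G)$ by one unit of multiplicity), allowing $G$ to carry $d$ defects replaces the defect-free weight bound by that same bound minus $d$, so $v(G) \leq m_2 - 2j - l - j\mu(G) - d$, which is exactly the claimed inequality.

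The only genuinely delicate point is the bookkeeping of the defect term together with the leaf correction $-(l - \mu(G) - 2)$ that was used to pass from the naive $\sum_e w_{\deg_1(e)}$ estimate to the sharper one; I must make sure that the $\mu(G)+2$ degree-$1$ vertices of weight $m_2 - j - 1$ appearing in the optimum are not double-counted against the leaves, and that the defect multiplicities $d(e)$ (measured against $w_{|S|}$) are consistent with the leaf-based reduction. Once one checks that the leaf correction is a statement about the \emph{non-defective} optimum and the defect count is layered on top of it additively — which is exactly how Lemma \ref{weightcount1} is structured — the estimate follows with no further computation. So the main obstacle is purely this consistency check between the leaf argument and the defect accounting; everything else is the cancellation forced by the definition of $r$.
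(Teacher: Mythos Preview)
Your proposal is correct and follows essentially the same route as the paper: the lemma is obtained exactly by subtracting $rm_2|E(G)|$ from the displayed weight bound immediately preceding it (the $|E(G)|$-terms cancel by the $k=0$ definition of $r$) and then appending the $-d$ coming from Lemma~\ref{weightcount1}. Your consistency check between the leaf correction and the defect accounting is also the right observation: the leaf reduction bounds the structural quantity $\sum_e w_{\deg_1(e)}$, while the defects are subtracted afterwards, so there is no double counting.
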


Now we consider all of $\tau'(P)$.  Let $\mbox{Comp}(P)$ be the set of connected components of $\tau'(P)$.

\begin{lemma}
\label{valuetotal}
If $\tau(P)$ is asymmetric, $v(\tau'(P)) \leq \sum_{G \in \mbox{\upshape{Comp}}(P)}v(G) + m_22^{m_2-1}$.
\end{lemma}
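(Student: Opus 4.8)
The plan is to set up careful bookkeeping comparing $v(\tau'(P))$ with the sum $\sum_{G} v(G)$, and then to control the leftover term using the asymmetry hypothesis. First I would note that $\tau'(P)$ has exactly $m_1$ edges, namely the $n_1$-edges of $\tau(P)$, and that every edge lies in exactly one connected component, so $\sum_{G \in \mbox{Comp}(P)} |E(G)| = m_1$. The vertices of $\tau'(P)$ need not all lie in a component: a part $P_{i'}$ of $P$ that meets only $n_2$-parts of $X$ is incident to no $n_1$-edge, hence is a degree $0$ vertex of $\tau'(P)$ and belongs to no $G \in \mbox{Comp}(P)$. Writing $W$ for the total weight of these isolated vertices, we get $\sum_{G} w(G) = w(\tau'(P)) - W$, and combining with the edge count gives $\sum_G v(G) = v(\tau'(P)) - W$. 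Thus the lemma reduces to the bound $W \le m_2 2^{m_2-1}$.

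The heart of the argument is this bound, and it is where asymmetry of $\tau(P)$ enters. Suppose $v_1$ and $v_2$ are distinct isolated vertices of $\tau'(P)$ carrying the same label $S \subseteq [m_2]$. Then the permutation of $V(\tau(P))$ transposing $v_1$ and $v_2$ and fixing everything else is an automorphism of $\tau(P)$: it fixes every $n_1$-edge, since neither $v_1$ nor $v_2$ lies on one, and it fixes every $n_2$-edge $X_i$, since both $v_1, v_2 \in X_i$ when $i \in S$ and neither when $i \notin S$. This nontrivial automorphism contradicts the hypothesis, so all isolated vertices have pairwise distinct labels; hence there are at most $2^{m_2}$ of them and $W \le \sum_{S \subseteq [m_2]} |S| = m_2 2^{m_2-1}$, since each element of $[m_2]$ lies in exactly $2^{m_2-1}$ subsets. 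As $W \ge 0$, the lemma follows.

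The only real obstacle is bookkeeping discipline: one must keep firmly in mind that $\mbox{Comp}(P)$ refers to components of the reduced hypergraph $\tau'(P)$ (with the $n_2$-edges deleted), so that components are exactly the connected pieces containing at least one $n_1$-edge, and the discrepancy term $W$ is precisely the weight of the degree $0$ vertices of $\tau'(P)$. Once that is pinned down the transposition argument is routine; in fact the bound $m_2 2^{m_2-1}$ carries a little slack, since such a vertex meets at least one $n_2$-part and so cannot carry the empty label.
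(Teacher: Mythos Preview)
Your proof is correct and follows essentially the same approach as the paper's: both decompose $v(\tau'(P))$ as $\sum_G v(G)$ plus the total weight $W$ of the degree~$0$ vertices, and then bound $W \le m_2 2^{m_2-1}$ by observing that asymmetry of $\tau(P)$ forces distinct labels on these vertices. Your version is simply more explicit, spelling out the transposition automorphism and the computation $\sum_{S \subseteq [m_2]} |S| = m_2 2^{m_2-1}$ that the paper leaves implicit.
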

\begin{proof}
We calculate that $v(\tau'(P))$ is $\sum_{G \in \mbox{\upshape{Comp}}(P)}v(G)$ plus the sum of the weights of all vertices not contained in any $n_1$-edge.  Since $\tau(P)$ is asymmetric, every vertex not contained in an $n_1$-edge must have a different label, and there is at most one vertex with every label $S \subset 2^{[m_2]}$.  The lemma follows.
\end{proof}

\section{An extremal construction}
\label{modelconst}

In this section, we give a general method of constructing a distinguishing partition $P$ of $K_{m_1(n_1),m_2(n_2)}$.  We then show that $n_2$ is maximal to within an additive constant, given the other parameters.  We do so by describing the vertex-labeled hypergraph $\tau'(P)$.  For the remainder of this section, we assume that $j < \lfloor (m_2-1)/2 \rfloor$; the case that $j = \lfloor (m_2-1)/2 \rfloor$ is treated seperately.

Let $G$ be a component of $\tau'(P)$.  Define the \textit{value} $v(e)$ of an edge $e \in E(G)$ to be $v(G)/|E(G)|$.  Let $\xi$ be the map that adds $1$ mod $m_2$ to every element in the label of every vertex of a $2^{[m_2]}$-vertex lableled hypergraph.  Note that $v(\xi(G)) = v(G)$.  Say that vertex labeled hypergraphs $G,G'$ are equivalent under $\sim_{\xi}$ if $G = \xi^i(G')$ for some $i$.

Let $\mathcal{T}^* = \mathcal{T}^*_{n_1,m_2} = (\mathcal{T}_1, \mathcal{T}_2, \ldots)$ be an ordered list of equivalence classes under $\sim_{\xi}$ of positive weight asymmetric hypergraphs such that an edge in an element of $\mathcal{T}_i$ has value at least as great as an edge in an element of $\mathcal{T}_{i+1}$ for all $i$.  By Lemma \ref{treevalue}, an edge $e$ may have value $\delta > 0$ only if $e$ is contained in a tree with at most $m_2/\delta$ edges.  Thus $\mathcal{T}^*$ enumerates all hypergraphs with positive value, and only classes of trees are in $\mathcal{T}^*$.

A \textit{symmetry breaking ring} $R$ is a $2^{[m_2]}$-vertex labeled hypergraph on at least $\min_R = \min_R(n_1,m_2)$ edges defined as follows.  If $m_2 = 1$, then $j = -1$ and $n_1 = 2$, and we set $\min_R = 6$.  Let $R$ be a cycle that contains consecutive vertices $v_1,v_2,v_3,v_4$.  Then all vertices of $R$ have weight $1$ except for $v_1,v_2,v_4$, which all have weight $0$.

If $m_2 > 1$, we set $\min_R = \max(2m_2,n_1+1)$.  Let $\mbox{quot}_R$ be the maximum multiple of $m_2$ up to $|E(R)|$.  Let $v_0, \ldots, v_{\mbox{quot}_R-1}$ be vertices and $e_0, \ldots, e_{\mbox{quot}_R-1}$ be edges such that $e_i$ contains only degree $1$ vertices except for $v_i, v_{i+1}$, subscripts mod $\mbox{quot}_R$.  The set of vertex labels of the degree $1$ vertices of $e_0$ includes all possible labels of size at least $m_2-j$, and all others are of size $m_2-j-1$.  To determine the labels of the degree $1$ vertices of $e_i$, add $i$ mod $m_2$ to every element in the labels of the degree $1$ vertices of $e_0$.  Assign $v_0$ the label $\emptyset$, $v_i$ the label $[m_2]-{i}$ for $1 \leq i \leq m_2$, and $v_i$ the label $[m_2]$ for all other $i$.  Finally, $R$ contains edges of the form ${v_i, v_{i+1}, \ldots, v_{i+n_1-1}}$ for $1 \leq i \leq |E(R)| - \mbox{quot}_R$.

We need some key facts on symmetry breaking rings.

\begin{lemma}
Let $R$ be a symmetry breaking ring that is a component of $\tau'(P)$.  Then no automorphism of $P$ induces a nontrivial automorphism of $R$.
\end{lemma}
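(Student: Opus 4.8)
The plan is to show that a symmetry breaking ring $R$, when it occurs as a component of $\tau'(P)$ with $\tau(P)$ asymmetric, has trivial automorphism group, and moreover that this rigidity is not destroyed by passing to automorphisms of $P$: any $\sigma \in \mathrm{aut}(P)$ that maps $V(R)$ to itself must induce the identity on $R$. Recall from Corollary~\ref{distparthypergraph} and the surrounding discussion that $\mathrm{aut}(P) \cong \mathrm{aut}(\tau(P))$, and that $\tau'(P)$ encodes $\tau(P)$ by replacing each $n_2$-edge $X_i$ with a vertex-labeling; so an automorphism of $P$ restricted to the subhypergraph on $V(R)$ is exactly a label-preserving automorphism of $R$ together with a compatible permutation of $[m_2]$ acting on the labels. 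Thus what must actually be proved is: the only pair $(\phi,\pi)$, with $\phi$ an automorphism of the underlying unlabeled hypergraph $R$ and $\pi \in S_{m_2}$, such that $\phi$ carries the label of each vertex $v$ to the $\pi$-image of the label of $\phi(v)$, is $(\mathrm{id},\mathrm{id})$.

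First I would treat the generic case $m_2 > 1$. The structural skeleton of $R$ is a cycle $e_0, \dots, e_{\mathrm{quot}_R - 1}$ of ``big'' edges sharing the cut vertices $v_0, \dots, v_{\mathrm{quot}_R - 1}$, together with a path of small $n_1$-sets $\{v_i, \dots, v_{i+n_1-1}\}$ threaded along part of the cycle. Since $\mathrm{quot}_R$ is a multiple of $m_2$ and is strictly less than $|E(R)|$ (because $R$ has at least $\min_R \ge 2m_2$ edges and the extra path edges are nonempty once $|E(R)| > \mathrm{quot}_R$; one should check the degenerate possibility $|E(R)| = \mathrm{quot}_R$ separately, or note $\min_R$ was chosen to exceed it), the path of small edges is nonempty and breaks the rotational symmetry of the cycle: an automorphism $\phi$ must fix the arc carrying small edges, hence fix its endpoints, hence (being an automorphism of a cycle fixing two vertices) either be the identity or a single reflection. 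A reflection is then excluded by the vertex labels: $v_0$ has label $\emptyset$ while $v_i = [m_2] \setminus \{i\}$ for $1 \le i \le m_2$ and $v_i = [m_2]$ otherwise, so the multiset of label-cardinalities along the cycle, read in the two directions from $v_0$, is not symmetric — and crucially the weight-$0$ vertex $v_0$ is unique, so $\phi$ fixes $v_0$, and then $\phi$ fixing $v_0$ plus fixing the small-edge arc forces $\phi = \mathrm{id}$. With $\phi = \mathrm{id}$, the compatibility condition becomes: $\pi$ fixes the label of every vertex. But $\{v_1, \dots, v_{m_2}\}$ have labels $[m_2]\setminus\{1\}, \dots, [m_2]\setminus\{m_2\}$, so $\pi$ fixes each $\{i\}$, whence $\pi = \mathrm{id}$. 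This is where the precise choice of $v_i$-labels in the construction is doing its work, and I would spell out that these vertices genuinely appear (i.e. $\mathrm{quot}_R \ge m_2$, which holds).

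Next I would dispose of the case $m_2 = 1$, where $j = -1$, $n_1 = 2$, and $R$ is an ordinary $6$-cycle (graph) with consecutive vertices $v_1, v_2, v_3, v_4$, all vertices of weight $1$ except $v_1, v_2, v_4$ of weight $0$. Here ``label'' just means weight (a subset of $[1]$), so no $\pi$-twisting is possible, and we only need that the $6$-cycle has no nontrivial weight-preserving automorphism. The three weight-$0$ vertices sit at pairwise distances $1$ ($v_1 v_2$), $2$ ($v_2 v_4$), and $3$ ($v_1 v_4$) around the cycle; an automorphism of $C_6$ permutes this triple preserving distances, and since all three distances are distinct it must fix each of $v_1, v_2, v_4$, and an automorphism of $C_6$ fixing three vertices (not all equally spaced) is the identity. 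I expect the main obstacle to be bookkeeping rather than ideas: carefully justifying that the asymmetry of the underlying unlabeled hypergraph is witnessed by the small-edge arc (handling the boundary case where that arc is short, and confirming $|E(R)| > \mathrm{quot}_R$ under the definition of $\min_R$), and then verifying that the label data pins down both $\phi$ and $\pi$ simultaneously rather than just one of them. Everything else is a direct unwinding of the isomorphism $\mathrm{aut}(P) \cong \mathrm{aut}(\tau(P))$ and of how $\tau'$ re-encodes the $n_2$-edges as labels.
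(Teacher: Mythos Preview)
Your overall strategy---decompose an automorphism of $P$ restricted to $R$ as a pair $(\phi,\pi)$ and then pin down $\phi$ using $\pi$-invariant data (weights) before recovering $\pi$ from the labels $[m_2]\setminus\{i\}$ of the $v_i$---is correct and is essentially what the paper does, though the paper organizes it more directly: it simply observes that $v_0$ is the unique weight-$0$ vertex, then that $v_1$ is the unique vertex sharing an edge with $v_0$ that has degree at least $2$ and weight $\neq m_2$, so $v_0,v_1$ are fixed, whence all $v_i$ are fixed, whence the $n_2$-edges are fixed, whence everything is fixed. The paper never invokes the extra ``small-edge arc'' at all.

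There are two places where you misread the construction. First, your parenthetical claim that $\min_R$ was chosen so that $|E(R)|>\mathrm{quot}_R$ is false: $\min_R=\max(2m_2,n_1+1)$ can perfectly well be a multiple of $m_2$, so the extra path edges may be absent. You flag this as a case to check, which is fine, but the alternative you offer does not hold; you should instead just use the weight argument you already sketched (the unique weight-$0$ vertex $v_0$ kills rotations, and the weight sequence $m_2-1,m_2-1,\ldots$ versus $m_2,m_2,\ldots$ on the two sides of $v_0$ kills the reflection). Second, for $m_2=1$ you assert that $R$ is a $6$-cycle, but $\min_R=6$ is only a lower bound on $|E(R)|$; the ring can be longer. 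Your distance argument still works verbatim for longer cycles (the three weight-$0$ vertices are at pairwise cycle-distances $1,2,3$ regardless of the cycle length, and fixing two adjacent vertices of a cycle forces the identity), so this is easy to repair, but the claim as written is inaccurate.
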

\begin{proof}
The lemma is readily verified when $m_2 = 1$, and so we assume that $m_2 > 1$.  Let $\sigma$ be an automorphism of $\tau(P)$ that induces an automorphism of $R$.  Since $v_0$ is the only vertex of $R$ of weight $0$, it is a fixed point.  Since $v_1$ is the only vertex of $R$ that is in a common edge with $v_0$, has degree $2$ in $\tau'(P)$, and weight not equal to $m_2$, $v_1$ is also a fixed point.  Thus all $v_i$ are fixed, which implies that $\sigma$ fixes the $n_2$-edges of $\tau(P)$.  Since all $v_i$ are fixed, $\sigma$ thus also fixes all $n_1$-edges of $R$.  Finally, since all degree $1$ vertices in a given edge have different labels, they must be fixed points as well.
\end{proof}

The following is readily observed from the construction of symmetry breaking rings.

\begin{lemma}
Let $R$ be a symmetry breaking ring, and let $1 \leq i < i' \leq m_2$.  Then the number of vertices of $R$ whose label contains $i$ is equal to the number of vertices of $R$ whose label contains $i'$.
\end{lemma}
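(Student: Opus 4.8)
The plan is to partition $V(R)$ into two families and show that each contributes the same number of vertices containing $i$ as containing $i'$; adding the two counts then gives the lemma. The first family is the set of \emph{frame} vertices $v_0,\dots,v_{\mbox{quot}_R-1}$, all of degree at least $2$, and the second is the set of degree-$1$ vertices lying in the edges $e_0,\dots,e_{\mbox{quot}_R-1}$. Since the remaining edges of $R$ all have the form $\{v_i,v_{i+1},\dots,v_{i+n_1-1}\}$ and hence contain no vertex outside the frame, these two families exhaust $V(R)$. (The case $m_2=1$ is vacuous, as there is then no admissible pair $1\le i<i'\le m_2$.)

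For the frame vertices I would read the counts off the construction directly. The vertex $v_0$ has label $\emptyset$, so it contributes to no element. For $1\le s\le m_2$ the vertex $v_s$ has label $[m_2]\setminus\{s\}$, so for a fixed $x\in[m_2]$ exactly the $m_2-1$ vertices among $v_1,\dots,v_{m_2}$ with $s\ne x$ have a label containing $x$. Every $v_s$ with $s>m_2$ has label $[m_2]$, hence contains $x$; and since $\mbox{quot}_R\ge 2m_2\ge m_2+2$ there are indeed $\mbox{quot}_R-1-m_2$ such vertices. So the number of frame vertices whose label contains $x$ is $(m_2-1)+(\mbox{quot}_R-1-m_2)=\mbox{quot}_R-2$, the same for every $x$.

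The heart of the argument is the degree-$1$ vertices, where I would exploit the cyclic structure of their labels. Let $L$ denote the multiset of labels of the degree-$1$ vertices of $e_0$. By construction the degree-$1$ vertices of $e_i$ carry the labels $L+(i\bmod m_2)$, where $L+s$ is obtained from $L$ by adding $s$ modulo $m_2$ to every element of every set in $L$. Because $\mbox{quot}_R$ is a multiple of $m_2$, each residue $s\in\mathbb{Z}/m_2$ is hit by exactly $\mbox{quot}_R/m_2$ of the indices $i\in\{0,\dots,\mbox{quot}_R-1\}$, so the full multiset of degree-$1$ labels equals $\mbox{quot}_R/m_2$ copies of the multiset union $\bigcup_{s\in\mathbb{Z}/m_2}(L+s)$. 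For a fixed $\ell\subseteq[m_2]$ and a fixed $x\in[m_2]$, the set $\{\,s\in\mathbb{Z}/m_2 : x\in\ell+s\,\}$ equals $\{\,x-a\bmod m_2 : a\in\ell\,\}$ and so has exactly $|\ell|$ elements. Summing over $\ell\in L$, the number of degree-$1$ vertices whose label contains $x$ is $(\mbox{quot}_R/m_2)\sum_{\ell\in L}|\ell|$, again independent of $x$. Adding the frame and degree-$1$ counts completes the proof.

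I do not expect a genuine obstacle here; the substance is just the observation that $\mbox{quot}_R$ being a multiple of $m_2$ makes the degree-$1$ labels cyclically balanced, together with the direct balance of the frame labels. The only care needed is bookkeeping: that the frame vertices and the degree-$1$ vertices of $e_0,\dots,e_{\mbox{quot}_R-1}$ really do exhaust $V(R)$ (the chord edges $\{v_i,\dots,v_{i+n_1-1}\}$ introduce nothing new), that $\mbox{quot}_R$ is large enough that the frame count is non-degenerate, and that the shift applied to $e_0$'s labels to produce $e_i$'s is indexed by $i\bmod m_2$ rather than by $i$ itself.
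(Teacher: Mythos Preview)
Your argument is correct and is precisely the direct verification the paper has in mind; the paper itself offers no proof beyond asserting that the statement ``is readily observed from the construction of symmetry breaking rings.'' Your partition into frame vertices and degree-$1$ vertices, together with the observation that $\mbox{quot}_R$ is a multiple of $m_2$ so that the cyclic shifts on the degree-$1$ labels are perfectly balanced, is exactly the intended reading of the construction.
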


\begin{lemma}
There exists a value $\omega = \omega_{n_1,m_2}$, which depends only on $n_1$ and $m_2$, such that a symmetry breaking ring $R$ has value at least $\omega$.
\end{lemma}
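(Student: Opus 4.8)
The plan is to compute $w(R)$ essentially exactly and to observe that, thanks to the $\bmod\ m_2$ periodicity built into the vertex labels, all but a bounded amount of the weight is already accounted for at the asymptotic rate $rm_2$ per edge. It is convenient to dispose of the case $m_2 = 1$ first: there $n_1 = 2$ and $R$ is an ordinary cycle on $|E(R)|$ vertices, exactly three of which (the vertices $v_1,v_2,v_4$ in the construction) have weight $0$ and the rest weight $1$, so $w(R) = |E(R)| - 3$; since $r_{2,1} = 1$ we get $v(R) = w(R) - rm_2|E(R)| = -3$, and $\omega_{2,1} = -3$ works.

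For $m_2 > 1$ we are in the case $j < \lfloor (m_2-1)/2 \rfloor$, so $n_1 - 2 = \sum_{i=0}^{j}{m_2 \choose i} + k$ with $0 \le k < {m_2 \choose j+1}$. Put $W := w_{n_1-2} = \sum_{i=0}^{j}(m_2-i){m_2 \choose i} + k(m_2-j-1)$; the definition of $r$ gives precisely $rm_2 = m_2 + W$. I would then read off $w(R)$ directly from the construction. Each of the $\mbox{quot}_R$ edges $e_i$ of the main cycle has $n_1 - 2$ degree-$1$ vertices, and since the labels of the degree-$1$ vertices of $e_i$ are obtained from those of $e_0$ by adding $i \bmod m_2$ to every element — an operation that preserves label cardinality — each $e_i$ contributes the same degree-$1$ weight $W$, for a total of $\mbox{quot}_R\, W$. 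The degree-$2$ vertices are $v_0, \ldots, v_{\mbox{quot}_R-1}$, with $v_0$ of weight $0$, $v_1, \ldots, v_{m_2}$ of weight $m_2-1$, and the remaining $\mbox{quot}_R - 1 - m_2$ of weight $m_2$ (the inequality $\mbox{quot}_R \ge \min_R \ge 2m_2$ makes these index ranges genuine), totalling $m_2\mbox{quot}_R - 2m_2$. The extra edges $\{v_i, \ldots, v_{i+n_1-1}\}$ introduce no new vertices, so $w(R) = \mbox{quot}_R\, W + m_2\mbox{quot}_R - 2m_2$ regardless of how many of them there are.

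To finish, write $|E(R)| = \mbox{quot}_R + s$ with $0 \le s \le m_2 - 1$, which is exactly what it means for $\mbox{quot}_R$ to be the largest multiple of $m_2$ not exceeding $|E(R)|$. Then
$$v(R) = w(R) - rm_2|E(R)| = \mbox{quot}_R\, W + m_2\mbox{quot}_R - 2m_2 - (m_2+W)(\mbox{quot}_R+s) = -2m_2 - (m_2+W)s,$$
so $v(R) \ge -2m_2 - rm_2(m_2-1) =: \omega_{n_1,m_2}$, a quantity depending only on $n_1$ and $m_2$. The one nontrivial point is the bookkeeping in the middle step: one must check that the contributions of the ``special'' low-weight vertices $v_0, \ldots, v_{m_2}$ and of the extra edges really do collapse into a correction that depends only on $s$ (and is therefore bounded), and this is precisely where the hypotheses $\min_R \ge \max(2m_2, n_1+1)$ and the cyclic-shift structure of the labels get used. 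Everything else is a routine check that the per-edge weight of the periodic part matches $rm_2$ exactly.
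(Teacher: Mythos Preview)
Your proof is correct and follows essentially the same approach as the paper: compute $w(R)$ exactly as $\mbox{quot}_R\, rm_2 - 2m_2$ (your $W+m_2$ is exactly the paper's $m_2r$), then subtract $rm_2|E(R)|$ and bound the residual $|E(R)|-\mbox{quot}_R$ by $m_2-1$. One minor slip: the inequality $\mbox{quot}_R \ge \min_R$ need not hold when $\min_R = n_1+1 > 2m_2$, but what you actually need is $\mbox{quot}_R \ge m_2+1$, and that does follow from $|E(R)|\ge 2m_2$.
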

\begin{proof}
If $m_2 = 1$, then $v(R) = -3$.  If $m_2 > 1$, then the total weight of the degree $1$ vertices in each edge with degree $1$ vertices is $m_2(r-1)$.  All other vertices have weight $m_2$ except for $m_2$ vertices of weight $m_2-1$ and one of weight $0$.  It follows that $w(R) = \mbox{quot}_R m_2r - 2m_2$ and  $v(R) = -(|E(R)| - \mbox{quot}_R)m_2r - 2m_2 > -m_2^2r - 2m_2$.
\end{proof}

We now come to our construction of $P$.  Choose $\zeta$ to be the maximum value such that the total number of edges in all trees of $\mathcal{T}_1 \cup \cdots \cup \mathcal{T}_\zeta$ is at most $m_1 - \min_R$.  Let $\Delta_{m_1(n_1),m_2}$ be the union of all trees in $\mathcal{T}_1 \cup \cdots \cup \mathcal{T}_\zeta$, together with a symmetry breaking ring so that $\Delta_{m_1(n_1),m_2}$ has $m_1$ edges, and a degree $0$ vertex of every label except $\emptyset$.

\begin{lemma}
$\Delta_{m_1(n_1),m_2}$ is in fact $\tau'(P)$ for a distinguishing partition $P$ for an appropriate value of $n_2$.
\end{lemma}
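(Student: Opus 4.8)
The plan is to decode $\Delta := \Delta_{m_1(n_1),m_2}$ into an honest hypergraph $H$, to check that $H$ is asymmetric and has $m_1$ edges of size $n_1$ together with $m_2$ edges of one common size $n_2$, and then to read off the distinguishing partition from the correspondence of Section~\ref{corrsection}. Concretely, I would set, for $1 \le i \le m_2$, $X_i$ equal to the set of vertices of $\Delta$ whose label contains $i$, and let $H$ be the hypergraph on vertex set $V(\Delta)$ whose edge set consists of the $n_1$-edges of $\Delta$ together with $X_1,\dots,X_{m_2}$. Since $\tau'(P)$ is by definition just $\tau(P)$ with its $n_2$-edges re-encoded by vertex labels, any regular partition $P$ of $K_{m_1(n_1),m_2(n_2)}$ with $\tau(P) = H$ automatically satisfies $\tau'(P) = \Delta$; so it suffices to produce such a $P$ and to show it is distinguishing.

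First I would verify that $n_2 := |X_i|$ is independent of $i$, so that $H$ genuinely has $m_2$ edges of a single size. Split $\Delta$ into its symmetry breaking ring $R$, its tree components, and its isolated vertices (the degree-$0$ vertices of $\tau'(P)$). The isolated vertices are exactly one vertex of each nonempty label, so each index $i$ lies in the labels of $2^{m_2-1}$ of them; this already forces $n_2 \ge 2^{m_2-1} \ge 1$. The union of the tree components is $\xi$-invariant, since each class $\mathcal T_c$ contributes a full $\xi$-orbit which $\xi$ permutes; as $\xi$ carries the vertices whose label contains $i$ bijectively onto those whose label contains $i+1$, each index is covered equally often among them. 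For $R$ the same balance is the content of the preceding lemma on symmetry breaking rings. Adding the three contributions, $|X_i| = w(\Delta)/m_2 =: n_2$ for every $i$, a positive integer. I would also note that the $X_i$ are pairwise distinct (the isolated vertex labelled $\{i\}$ separates $X_i$ from $X_{i'}$) and, for $m_1$ large, of size $n_2 > n_1$, so no $n_2$-edge coincides with an $n_1$-edge; hence $H$ has no repeated edge. Finally every vertex of $\Delta$ has degree at least $1$ in $H$ — an isolated vertex of $\tau'(P)$ still lies in $|S| \ge 1$ of the $X_i$ — so the partition $P$ obtained from $H$ by the construction of Section~\ref{corrsection} has no empty part, is regular, and satisfies $\tau(P) = H$ and $\tau'(P) = \Delta$.

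It then remains to prove $H$ asymmetric; by the lemma of Section~\ref{corrsection} that $\mathrm{aut}(P) \cong \mathrm{aut}(\tau(P))$ this is equivalent to $P$ being distinguishing. Let $\sigma$ be an automorphism of $H$. It permutes $V(H)$, the $n_1$-edges (which have a size distinct from the $n_2$-edges, for $m_1$ large), and the $n_2$-edges; let $\pi \in S_{m_2}$ be the induced permutation of $X_1,\dots,X_{m_2}$, so that $\sigma$ sends a vertex with label $S$ to one with label $\pi(S)$. Because $\sigma$ preserves the $n_1$-edges and their incidences, it permutes the connected components of $\tau'(P) = \Delta$, mapping trees to trees and preserving the number of $n_1$-edges of a component; since $R$ is the only component that contains an $n_1$-edge and is not a tree, $\sigma(R) = R$. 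The preceding lemma that no automorphism of $P$ induces a nontrivial automorphism of a symmetry breaking ring then shows $\sigma$ fixes $R$ pointwise; in particular it fixes the vertices $v_1,\dots,v_{m_2}$ of $R$, which carry the labels $[m_2]\setminus\{1\},\dots,[m_2]\setminus\{m_2\}$, so $\pi$ fixes each of these sets and hence $\pi = \mathrm{id}$ (when $m_2 = 1$ this is automatic). Thus $\sigma$ preserves every vertex label.

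With $\pi = \mathrm{id}$ the argument closes: each isolated vertex carries a distinct nonempty label and so is fixed, and $\sigma$ maps each tree component of $\Delta$ to a tree component with an identical vertex-labeled structure. But the tree components of $\Delta$ are precisely the members of the $\xi$-orbits of the distinct classes $\mathcal T_1,\dots,\mathcal T_\zeta$, which are pairwise non-isomorphic as vertex-labeled hypergraphs; hence $\sigma$ fixes each tree component setwise. On such a component $T = \xi^a(G)$, with $G$ an asymmetric hypergraph in some $\mathcal T_c$, the hypergraph $T$ is again asymmetric (the relabeling $\xi^a$ creates no automorphisms), and $\sigma$ restricts to a label-preserving automorphism of $T$, which is therefore trivial. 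So $\sigma$ is trivial on $R$, on every tree component, and on every isolated vertex, whence $\sigma$ is trivial and $H$ is asymmetric; by Corollary~\ref{distparthypergraph} and the correspondence of Section~\ref{corrsection}, $P$ is a distinguishing partition of $K_{m_1(n_1),m_2(n_2)}$. The main obstacle I anticipate is the label bookkeeping of the last two paragraphs — isolating $\pi \in S_{m_2}$ and using the rigidity of $R$ to force $\pi = \mathrm{id}$ — with the $\xi$-averaging that makes $n_2$ well defined as the secondary technical point; the structural claims (that $R$ is the unique non-tree component, and that distinct labeled trees cannot be interchanged) are comparatively routine.
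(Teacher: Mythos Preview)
Your proposal is correct and follows essentially the same approach as the paper. The paper's own proof is considerably terser: it simply asserts that, by construction, each index $i$ appears in the same number of labels (your $\xi$-averaging over the tree classes together with the balance lemma for $R$ is the content of this assertion), and then argues that any automorphism of $\tau(P)$ must fix the $n_2$-edges because $\Delta$ has exactly one symmetry breaking ring (this is your step of isolating $R$ as the unique non-tree $n_1$-edge component, applying the ring lemma to fix $R$ pointwise, and reading off $\pi=\mathrm{id}$ from the labels $[m_2]\setminus\{i\}$); it then concludes from the pairwise non-isomorphism and asymmetry of the remaining components that $\sigma$ is trivial, exactly as you do. Your write-up is more explicit about the intermediate checks (distinctness of the $X_i$, $n_2>n_1$, no isolated vertex of empty label), which the paper leaves implicit.
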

\begin{proof}
By construction, $\Delta_{m_1(n_1),m_2}$ contains the same number of vertices whose label contains $i$ as the number of vertices whose label contains $i'$ for all $1 \leq i < i' \leq m_2$.  Thus $\Delta_{m_1(n_1),m_2}$ is $\tau'(P)$ for some partition $P$ of $K_{m_1(n_1),m_2(n_2)}$ and for some $n_2$.  Next, we apply Corollary \ref{distparthypergraph} and show that $P$ is distinguishing by showing that $\tau(P)$ is asymmetric.  Let $\sigma$ be an automorphism of $\tau(P)$.  Since $\Delta_{m_1(n_1),m_2}$ contains exactly one symmetry breaking ring, all $n_2$-edges of $\tau(P)$ are fixed  under $\sigma$.  Since all components of $\Delta_{m_1(n_1),m_2}$ are asymmetric and no two are isomorphic to each other, all $n_1$-edges and vertices of $\tau(P)$ are fixed as well.
\end{proof}

Next, we prove that $\Delta_{m_1(n_1),m_2}$ is a nearly optimal construction.

\begin{lemma}
\label{nearopt}
Let $P$ be a distinguishing partition of $K_{m_1(n_1),m_2(n_2)}$ such that $\tau'(P) = \Delta_{m_1(n_1),m_2}$, and let $G'$ be an asymmetric vertex-labeleld hypergraph.  Then $w(G') \leq w(\Delta_{m_1(n_1),m_2}) + \mbox{\upshape{Error}}_{n_1,m_2}$ for some value $\mbox{\upshape{Error}}_{n_1,m_2}$ that depends only on $n_1$ and $m_2$.  In particular, if $P'$ is a distinguishing partition of $K_{m_1(n_1),m_2(n_2')}$, then $n_2' \leq n_2 + \mbox{\upshape{Error}}_{n_1,m_2}/m_2$.
\end{lemma}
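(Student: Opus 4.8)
The plan is to compare an arbitrary asymmetric vertex-labeled hypergraph $G'$ to $\Delta_{m_1(n_1),m_2}$ via the weight estimates of Section \ref{generallemmas}. First I would split $G'$ into its connected components that contain an $n_1$-edge, plus the isolated (degree $0$) vertices; by Lemma \ref{valuetotal} the total weight contributed by the degree $0$ vertices is at most $m_2 2^{m_2-1}$, which is absorbed into $\mbox{Error}_{n_1,m_2}$, and $\Delta_{m_1(n_1),m_2}$ already carries a degree $0$ vertex of every nonempty label, so on that part it is within a bounded deficit of optimal. So it suffices to bound $\sum_{G} v(G)$ over the components $G$ of $G'$ carrying $n_1$-edges, where the total number of $n_1$-edges is $m_1$, against $v(\Delta_{m_1(n_1),m_2})$ minus a constant.

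Next I would invoke Corollary \ref{valuecor}: every component of positive value is a tree with $j < \lfloor(m_2-1)/2\rfloor$, and by Lemma \ref{treevalue} an edge in such a tree has value at most $(m_2-2j)/|E(G)|$, so only trees from finitely many isomorphism types (those with at most $m_2/\delta$ edges for the smallest positive edge-value $\delta$) can contribute positively. The list $\mathcal{T}^*$ enumerates exactly these, ordered by decreasing per-edge value. The construction of $\Delta_{m_1(n_1),m_2}$ greedily packs the highest-per-edge-value trees $\mathcal{T}_1,\ldots,\mathcal{T}_\zeta$ until at most $\min_R$ edges remain, then fills with a symmetry breaking ring. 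I would argue that $G'$, subject to asymmetry, cannot do better: asymmetry forbids using two isomorphic copies of any tree class (a $\sim_\xi$-equivalence class, since the $\xi$-shift of a component is again realizable only once globally), so $G'$ can use each class in $\mathcal{T}^*$ at most once, exactly as $\Delta$ does; hence $G'$ uses at most the same multiset of positive-value tree classes that $\Delta$ uses, and every component of $G'$ not of positive value contributes value $\le 0$. The only slack is the edges $\Delta$ devotes to the symmetry breaking ring (at most $\max(2m_2,n_1+1)+m_2$ of them, carrying value at least $\omega_{n_1,m_2}$ by the ring lemma) versus however $G'$ spends its corresponding edges; both are bounded by functions of $n_1,m_2$ alone, so their difference is a constant $\mbox{Error}_{n_1,m_2}$.

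Assembling these: $w(G') = m_2\sum_G v(G) + r m_2 m_1 + (\text{degree }0\text{ weight}) \le r m_2 m_1 + m_2\big(\sum_{i\le\zeta} v(\mathcal{T}_i)\big) + m_2\cdot 2^{m_2-1} + (\text{ring slack})$, and the same identity read off $\Delta_{m_1(n_1),m_2}$ gives $w(\Delta_{m_1(n_1),m_2}) \ge r m_2 m_1 + m_2\sum_{i\le\zeta} v(\mathcal{T}_i) + \omega_{n_1,m_2} - O_{n_1,m_2}(1)$, whence $w(G') - w(\Delta_{m_1(n_1),m_2}) \le \mbox{Error}_{n_1,m_2}$ for a suitable constant depending only on $n_1$ and $m_2$. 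The final sentence follows since $n_2 = w(\tau'(P))/m_2$ for any distinguishing partition $P$ (and $n_2' = w(G')/m_2$ when $G' = \tau'(P')$), so $n_2' \le n_2 + \mbox{Error}_{n_1,m_2}/m_2$.

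The main obstacle I anticipate is the step asserting that asymmetry of $G'$ genuinely prevents it from reusing a tree class more times than $\Delta$ does while still doing better overall: one must handle the interaction between the $\sim_\xi$ equivalence (the global balance condition forcing label-shifts to be packaged) and the possibility that $G'$ gains value by using a slightly suboptimal but $\xi$-inequivalent variant, or by trading ring edges for extra small trees. Pinning down that this trade is always bounded by a constant — i.e. that the greedy choice in $\Delta$ is optimal up to $O_{n_1,m_2}(1)$ — is the crux, and it rests on the finiteness of $\mathcal{T}^*$ together with the uniform lower bound $\omega_{n_1,m_2}$ on the ring's value.
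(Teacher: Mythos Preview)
Your overall strategy---split off degree-$0$ vertices, observe that only trees contribute positive value, and compare the greedy packing in $\Delta_{m_1(n_1),m_2}$ to an arbitrary asymmetric $G'$---is exactly the paper's approach. The gap is in the quantitative step where you bound the slack from the symmetry breaking ring.

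You assert that the ring uses at most $\max(2m_2,n_1+1)+m_2$ edges and that $\mathcal{T}^*$ is finite. Both are false. The list $\mathcal{T}^*$ is infinite: for every sufficiently large $t$ there is a positive-value tree on $t$ edges. Consequently, as $m_1$ grows, the index $\zeta$ grows, and the per-edge value $v_{\rho+1}$ of the first unused class $\mathcal{T}_{\zeta+1}$ tends to $0$. The number of edges left over for the ring is at most $\min_R$ plus the number of edges in $\mathcal{T}_{\zeta+1}$, and since a tree of per-edge value $v_{\rho+1}$ has up to $m_2/v_{\rho+1}$ edges and a $\sim_\xi$-class contains up to $m_2$ trees, the ring can have as many as $\min_R + m_2^2/v_{\rho+1}$ edges. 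This is unbounded in $m_1$.

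What rescues the argument---and what the paper actually does---is to bound not $|E(R)|$ but the product $|E(R)|\cdot v_{\rho+1}$. The upper bound for $G'$ is $rm_1m_2 + \sum_{i\le\rho}\mathrm{nv}_i v_i + (m_1-\sum_{i\le\rho}\mathrm{nv}_i)v_{\rho+1} + m_22^{m_2-1}$, while $\Delta$ achieves at least this same quantity minus $|E(R)|\,v_{\rho+1} - \omega$, since the ring's edges each forfeit at most $v_{\rho+1}$ relative to the bound and the ring itself has value at least $\omega$. Then $|E(R)|\,v_{\rho+1} \le (\min_R + m_2^2/v_{\rho+1})\,v_{\rho+1} = \min_R\,v_{\rho+1} + m_2^2$, which is bounded by a function of $n_1,m_2$ alone. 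That is the missing step. (A minor algebraic slip: $w(G') = \sum_G v(G) + rm_2m_1 + (\text{degree-}0\text{ weight})$, without the extra factor of $m_2$ on the value sum.)
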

\begin{proof}
First we determine an upper bound on $w(G')$ in terms of the structure $\mathcal{T}^*$, and then we compare that to $w(\Delta_{m_1(n_1),m_2})$.  Let $v^+$ be the sum of the weights of all vertices of $G'$ that are not contained in edges; since they must all have different labels, $v^+ \leq m_2 2^{m_2-1}$.  Then, summing over all components $G$ of $G'$ that contain $n_1$-edges, $n_2' \leq \frac{1}{m_2}\sum_{G} w(G) + 2^{m_2-1} = rm_1 + \frac{1}{m_2}\sum_{e \in E(G')}v(e) + 2^{m_2-1}$.

Suppose that the set of edge values of hypergraphs in $\mathcal{T}^*$ are $\{v_1, v_2, \ldots\}$ with $v_1 > v_2 > \cdots$, and suppose that there are $\mbox{nv}_i$ total edges in all hypergraphs of $\mathcal{T}^*$ with value $v_i$.  Let $\rho$ be the largest value such that $\tau'(P)$ contains $\mbox{nv}_{\rho}$ edges of value $v_{\rho}$.  Then $$w(G') \leq rm_1m_2 + \sum_{i=1}^{\rho}\mbox{nv}_iv_i + \left( m_1-\sum_{i=1}^{\rho}\mbox{nv}_i\right) v_{\rho+1} + m_22^{m_2-1}.$$

Now we consider $\tau'(P)$ with symmetry breaking ring $R$.  Choose $\zeta$ so that $\mathcal{T}_{\zeta}$ is the last equivalence class of trees that are components of $\tau'(P)$; edges in $\mathcal{T}_{\zeta+1}$ have value $v_{\rho+1}$, and thus by Lemma \ref{treevalue}, each tree in $\mathcal{T}_{\rho+1}$ has at most $m_2/(v_{\rho+1})$ edges.  By construction, $R$ has at most $m_2^2/(v_{\rho+1}) + \min_R$ edges, each of which has value at least $\omega/|E(R)|$.  Furthermore, $\tau'(P)$ contains $\mbox{nv}_i$ edges of value $v_i$  for $1 \leq i \leq \rho$, and all edges besides these and edges in $R$ have value $v_{\rho+1}$.  Thus $w(\Delta_{m_1(n_1),m_2}) \geq $

$$rm_1m_2 + \sum_{i=1}^{\rho}\mbox{nv}_iv_i + \left( m_1-\sum_{i=1}^{\rho}\mbox{nv}_i\right) v_{\rho+1} - |E(R)|\left(v_{\rho+1} - \frac{\omega}{|E(R)|}\right) + m_22^{m_2-1}.$$

Thus $w(G') - w(\Delta_{m_1(n_1),m_2}) \leq |E(R)|(v_{\rho+1} - \omega/|E(R)|) \leq m_2^2 + \min_Rv_{\rho+1} - \omega$.  This proves the lemma.
\end{proof}

In the subsequent sections, we prove upper bounds on $n_2$, in terms of the other variables, by evaluating the weights of $\Delta_{m_1(n_1),m_2}$.  For every $m_1,n_1,m_2$, choose $n_2'(m_1,n_1,m_2)$ maximally so that $K_{m_1(n_1),m_2(n_2')}$ has a distinguishing partition $P'_{m_1(n_1),m_2}$.

\begin{lemma} $\lim_{m_1 \rightarrow \infty}\frac{n_2'(m_1,n_1,m_2)}{n_2'(m_1+1,n_1,m_2)} = 1$.
\label{ratio1}
\end{lemma}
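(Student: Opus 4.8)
The plan is to sandwich $n_2'(m_1,n_1,m_2)$ between two quantities whose ratio tends to $1$, using the construction $\Delta_{m_1(n_1),m_2}$ and the near-optimality estimate of Lemma \ref{nearopt}. First I would show that $n_2'(m_1,n_1,m_2)$ grows linearly in $m_1$: by Lemma \ref{nearopt} applied to the optimal partition of $K_{m_1(n_1),m_2(n_2')}$, we have $n_2' \le r m_1 + \mathrm{Error}_{n_1,m_2}/m_2 + 2^{m_2-1}$, so $n_2' = O(m_1)$; in the other direction, the construction $\Delta_{m_1(n_1),m_2}$ already produces a distinguishing partition with $n_2 = w(\Delta_{m_1(n_1),m_2})/m_2$, and since every component has value bounded below (trees by Lemma \ref{treevalue}, the symmetry breaking ring by the lemma giving value at least $\omega_{n_1,m_2}$), we get $w(\Delta_{m_1(n_1),m_2}) \ge r m_1 m_2 - c m_1$ for a constant depending only on $n_1,m_2$ in the worst case, and in fact $\ge r m_1 m_2 - o(m_1) m_2$ once one observes that only finitely many edges (those in $\mathcal{T}_1 \cup \cdots \cup \mathcal{T}_\zeta$ of bounded size, plus the ring, whose edge count is bounded) fail to have value $v_{\rho+1} \to 0$. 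Either way, $n_2'(m_1,n_1,m_2) = \Theta(m_1)$, say $n_2'(m_1,n_1,m_2) \ge c_0 m_1$ for large $m_1$ and some $c_0 > 0$.

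Next I would establish the monotone-type comparison between consecutive values of $m_1$. The key point is that a distinguishing partition for $K_{(m_1+1)(n_1),m_2(n_2)}$ yields, after deleting one $n_1$-edge of $\tau(P)$ (equivalently, one component's worth of structure or one edge of the symmetry breaking ring, chosen so that asymmetry is preserved), a distinguishing partition for $K_{m_1(n_1),m_2(n_2'')}$ with $n_2'' \ge n_2 - C_1$ for a constant $C_1 = C_1(n_1,m_2)$: removing an $n_1$-edge removes at most $n_1$ vertices, hence decreases the total weight by at most $n_1 m_2$, so $n_2$ decreases by at most $n_1$. Combined with the fact proved in Section \ref{modelconst} that $\Delta$ is near-optimal, this gives $n_2'(m_1,n_1,m_2) \ge n_2'(m_1+1,n_1,m_2) - C_2$ for a constant $C_2 = C_2(n_1,m_2)$ independent of $m_1$. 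Symmetrically, a distinguishing partition for $K_{m_1(n_1),m_2(n_2)}$ extends to one for $K_{(m_1+1)(n_1),m_2(n_2)}$ by adding one more edge to the symmetry breaking ring (which by construction has value at least $\omega/|E(R)| > -m_2^2 r - 2m_2$, a bounded loss of weight), so $n_2'(m_1+1,n_1,m_2) \ge n_2'(m_1,n_1,m_2) - C_3$. Hence $|n_2'(m_1,n_1,m_2) - n_2'(m_1+1,n_1,m_2)| \le C$ for a constant $C = C(n_1,m_2)$ that does not depend on $m_1$.

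Finally, combining the two: since $n_2'(m_1,n_1,m_2) \ge c_0 m_1 \to \infty$ and $|n_2'(m_1,n_1,m_2) - n_2'(m_1+1,n_1,m_2)| \le C$, we get
$$1 - \frac{C}{c_0 m_1} \le \frac{n_2'(m_1,n_1,m_2)}{n_2'(m_1+1,n_1,m_2)} \le 1 + \frac{C}{c_0 m_1},$$
and letting $m_1 \to \infty$ forces the ratio to $1$. The main obstacle I anticipate is making the deletion/addition step fully rigorous while \emph{keeping} the hypergraph asymmetric and keeping the balance condition (equal number of vertices whose label contains $i$, for each $i$) intact — one must delete or add a carefully chosen $\sim_\xi$-orbit of edges in the symmetry breaking ring, rather than an arbitrary edge, and verify via the lemmas on symmetry breaking rings that the result is still $\tau'(P)$ for a distinguishing $P$; the weight bookkeeping itself is routine once that is in place. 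An alternative cleaner route avoids the deletion step entirely: bound $n_2'(m_1,n_1,m_2)$ above by $r m_1 + O(1)$ via Lemma \ref{nearopt} and below by $r m_1 - o(m_1)$ via the explicit weight of $\Delta_{m_1(n_1),m_2}$, so that both $n_2'(m_1,n_1,m_2)/m_1 \to r$ and $n_2'(m_1+1,n_1,m_2)/(m_1+1) \to r$, whence the ratio tends to $r/r = 1$ provided $r > 0$, which holds since $r \ge 1$ by the definition of $r_{n_1,m_2}$.
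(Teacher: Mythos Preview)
Your overall plan---linear lower growth plus a bounded consecutive difference forces the ratio to $1$---is the right shape and matches the spirit of the paper's one-line proof. But two concrete points need correction.

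First, you misread Lemma \ref{nearopt}. It asserts $n_2' \le n_2(\Delta_{m_1(n_1),m_2}) + \mathrm{Error}_{n_1,m_2}/m_2$, \emph{not} $n_2' \le r m_1 + O(1)$. The quantity $n_2(\Delta_{m_1(n_1),m_2}) - r m_1$ is in general unbounded (for instance it is $\approx m_1/\log_\beta m_1$ when $n_1=2$, by Theorem \ref{n1is2thm}). This invalidates your ``alternative cleaner route'' outright: the statement $n_2'/m_1 \to r$ is essentially Theorem \ref{maintheorem} itself and is only established in the later sections, so you cannot invoke it here. In your main route the misreading is harmless, since you only need $n_2' \to \infty$, which follows from the lower bound $n_2' \ge n_2(\Delta_{m_1(n_1),m_2}) \ge r m_1 + (\omega + m_2 2^{m_2-1})/m_2$.

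Second, your bounded-difference step has a real gap you do not close. The deletion argument needs an $n_1$-edge of an \emph{arbitrary} optimal $\tau(P')$ whose removal keeps the hypergraph asymmetric and keeps all $m_2$ label-counts equal; the addition argument assumes that optimal $P'$ contains a symmetry breaking ring, which is guaranteed only for $\Delta$, not for an unknown maximizer. The paper avoids any surgery on an unknown partition: Lemma \ref{nearopt} gives $n_2(\Delta_{m_1(n_1),m_2}) \le n_2'(m_1,n_1,m_2) \le n_2(\Delta_{m_1(n_1),m_2}) + \mathrm{Error}_{n_1,m_2}/m_2$, so it suffices that $n_2(\Delta_{m_1(n_1),m_2})/n_2(\Delta_{(m_1+1)(n_1),m_2}) \to 1$. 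That is immediate from the construction: the proof of Lemma \ref{nearopt} already shows $w(\Delta_{m_1(n_1),m_2})$ lies within a fixed constant of $U(m_1) := r m_1 m_2 + \sum_{i\le \rho} \mathrm{nv}_i v_i + \bigl(m_1 - \sum_{i\le \rho}\mathrm{nv}_i\bigr)v_{\rho+1} + m_2 2^{m_2-1}$, and $U(m_1+1)-U(m_1)\in[0,\,r m_2 + v_1]$ while $U(m_1)\ge r m_1 m_2 \to \infty$. That is what ``by construction of $\Delta_{m_1(n_1),m_2}$ and Lemma \ref{nearopt}'' is pointing to.
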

\begin{proof}
It follows by construction of $\Delta_{m_1(n_1),m_2}$ and Lemma \ref{nearopt}.
\end{proof}

\begin{lemma}
If $m_1$ is large and $n_2$ is small, then $K_{m_1(n_1),m_2(n_2)}$ has a distinguishing partition.
\end{lemma}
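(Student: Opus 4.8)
The plan is to build the required asymmetric hypergraph explicitly and invoke Corollary~\ref{distparthypergraph}. Following the encoding of Section~\ref{corrsection}, it suffices to produce, for each small $n_2$, a $2^{[m_2]}$-vertex-labelled $n_1$-uniform hypergraph with $m_1$ edges that is \emph{balanced}, i.e. in which every $i\in[m_2]$ lies in exactly $n_2$ of the vertex labels, and whose associated hypergraph $\tau(P)$ is asymmetric; recall that $\tau(P)$ is asymmetric exactly when no automorphism of the underlying \emph{unlabelled} $n_1$-uniform hypergraph, composed with a relabelling $\pi$ of $[m_2]$, preserves the labelled hypergraph. Two values are degenerate and can be dealt with first: $n_2=0$ gives $K_{m_1(n_1)}$ and $n_2=n_1$ gives $K_{(m_1+m_2)(n_1)}$, both of which have a distinguishing partition for large $m_1$ by Ellingham and Schroeder \cite{distpart}; so assume $1\le n_2$ and $n_2\neq n_1$.

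The construction is a scaled-down cousin of $\Delta_{m_1(n_1),m_2}$. Take a symmetry breaking ring $R$ with the minimum number $\min_R$ of edges and glue to it, at its weight-$0$ vertex $v_0$, an asymmetric $n_1$-uniform tree $B$ with $m_1-\min_R$ edges; such a $B$ exists once $m_1$ is large (for $n_1=2$ it is an asymmetric tree on $m_1-\min_R+1$ vertices, and in general it is any element of $\mathfrak{a}_F$ for $F=\mathcal{E}_{n_1}$ with the appropriate number of edges). Rigidity of the resulting labelled hypergraph follows in three moves, \emph{independently of how the vertices of $B$ are labelled}. First, $R$ contains a cycle and $B$ does not, so any automorphism of the $n_1$-hypergraph carries $R$ to $R$ and $B$ to $B$, and hence fixes the cut vertex $v_0$. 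Second, restricted to $R$ with $v_0$ fixed, an automorphism of the unlabelled ring is either the identity or the reflection through $v_0$, and the reflection is incompatible with every relabelling of $[m_2]$ (it would have to send the size-$(m_2-1)$ label of $v_1$ to the size-$m_2$ label of its mirror image); so the automorphism fixes every $v_i$, and reading off the labels $[m_2]\setminus\{i\}$ of $v_1,\dots,v_{m_2}$ forces $\pi$ to be trivial. Third, with $\pi$ trivial the surviving automorphism is a genuine automorphism of the $n_1$-hypergraph, which must fix $R$ pointwise (symmetry breaking rings are rigid) and $B$ pointwise ($B$ is asymmetric), hence is trivial.

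It remains to choose labels on $V(B)$ to meet the balance condition. The vertices of $R$ already contribute the same amount $\nu_{n_1,m_2}:=w(R)/m_2$ to the count of every coordinate (uniformity across coordinates is one of the stated properties of symmetry breaking rings), so one must add $n_2-\nu_{n_1,m_2}$ further occurrences of each $i\in[m_2]$, spread over the roughly $(n_1-1)m_1$ vertices of $B$ subject only to a cap of $m_2$ occurrences per vertex. A trivial counting argument shows this is possible whenever $\nu_{n_1,m_2}\le n_2\le c_{n_1}m_1$ for a suitable positive constant $c_{n_1}$, which for large $m_1$ covers every $n_2$ in the ``small'' regime at issue (in particular every $n_2=o(m_1)$, and for $n_1=2$ essentially the whole range up to $m_1$). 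Corollary~\ref{distparthypergraph} then yields the desired distinguishing partition. The finitely many residual values $1\le n_2<\nu_{n_1,m_2}$, each a constant independent of $m_1$, are handled by the same scheme with a lighter rigid gadget in place of $R$ (one keeping only enough nonempty labels to separate all $m_2$ coordinates), or by direct inspection.

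The step I expect to be the main obstacle is forcing the relabelling $\pi$ of the $m_2$ large parts to be trivial. This is precisely where the naive attempt fails: if one tried to carry all of the $n_2$-edge structure on isolated labelled vertices (with $B$ contributing none of it), the associated family of subsets of $[m_2]$ would, being balanced, typically be invariant under a large group of relabellings, inducing nontrivial automorphisms of $\tau(P)$. The remedy is to interlock the $n_2$-edges with a rigid, coordinate-separating gadget, which is exactly what the symmetry breaking ring supplies; once it is in place, the automorphisms of the $n_1$-hypergraph are killed cheaply by the asymmetric backbone $B$, and the balance condition is met simply because $B$ has far more vertices than the leftover weight requires.
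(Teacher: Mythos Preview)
Your approach is essentially correct but vastly over-engineered compared to the paper's. The paper's proof is two sentences: take an asymmetric $n_1$-uniform hypergraph $H$ with $m_1$ edges (this exists for large $m_1$ by Ellingham--Schroeder \cite{distpart}), then place $m_2$ pairwise distinct $n_2$-edges on $V(H)$. Since $n_1\neq n_2$ (the equal case being $K_{(m_1+m_2)(n_1)}$, covered directly by \cite{distpart}), any automorphism of the resulting hypergraph preserves the $n_1$-edge set and hence restricts to an automorphism of $H$; asymmetry of $H$ then forces every vertex to be fixed, so every $n_2$-edge is fixed as well. No symmetry breaking ring, no label bookkeeping, no analysis of the relabelling $\pi$ is needed: the $n_1$-backbone kills all vertex permutations outright, and the $n_2$-edges are passive passengers.

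Your construction instead interleaves the $n_2$-structure with the $n_1$-structure via the $\tau'(P)$ labelling, then argues rigidity in three stages (cycle vs.\ tree, ruling out the reflection via label sizes, pinning $\pi$). This works, and it buys you a much larger range of $n_2$ (up to $\Theta(m_1)$) than the lemma actually requires---but that flexibility is irrelevant here since the statement only asks for bounded $n_2$. Two minor slips: the species $\mathfrak{a}_F$ with $F=\mathcal{E}_{n_1}$ is empty for $n_1\geq 2$ (every vertex would need degree exactly $n_1$, impossible in a tree); you simply mean an asymmetric $n_1$-uniform hypergraph tree. And the residual range $1\leq n_2<\nu_{n_1,m_2}$ is left to ``direct inspection,'' which is precisely the range the lemma is about; the paper's argument handles it uniformly with no case distinction.
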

\begin{proof}
If $n_2$ is small relative to $n_1$ and $m_2$, then an asymmetric hypergraph with $m_i$ edges of size $n_i$ for $i=1,2$ may be constructed as follows.  First take an asymmetric hypergraph with $m_1 $ $n_1$-edges, which exists by the main result of \cite{distpart}.  Then add $m_2$ $n_2$-edges on the same vertex set.  The result is asymmetric.
\end{proof}

Assume that $n_2$ is large.  Choose $m^*$ maximally so that $n_2' = n_2'(m^*,n_1,m_2) \geq n_2$.  By Lemma \ref{ratio1}, $n_2'/n_2$ is close to $1$.  We need to show that $K_{m^*(n_1),m_2(n_2)}$ has a distinguishing partition.  Our method is to show that there exists distinguishing partition $P'$ of $K_{m^*(n_1),m_2(n_2')}$ and a subset $S$ of weight $m_2$ vertices on $\tau'(P')$, with $|S| = n_2'-n_2$, such that the hypergraph that results by changing all of the labels of vertices of $S$ from $[m_2]$ to $\emptyset$ is asymmetric.

\begin{lemma}
With all quantities as above, $V(\tau'(P'))$ has a subset $S$ of weight $m_2$ vertices of size $n_2'-n_2$ such that the hypergraph $G'$ that results from changing all labels of vertices of $S$ from $[m_2]$ to $\emptyset$ is $\tau'(P^*)$ for a distinguishing partition $P^*$ of $K_{m^*(n_1),m_2(n_2)}$.
\end{lemma}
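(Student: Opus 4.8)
The plan is to choose $P'$ with $\tau'(P') = \Delta_{m^*(n_1),m_2}$ and to obtain $\tau'(P^*)$ from it by relabelling, from $[m_2]$ to $\emptyset$, a carefully chosen set $S$ of weight-$m_2$ vertices, all lying inside tree components of $\Delta_{m^*(n_1),m_2}$ and at most one of them in any single $\sim_\xi$-class of those components; the symmetry breaking ring and the degree-$0$ vertices are left untouched. Relabelling an $[m_2]$-vertex to $\emptyset$ removes it from every $n_2$-edge and from no $n_1$-edge, so the resulting labelled hypergraph $G'$ keeps the $m^*$ edges of size $n_1$, has $m_2$ edges now each of size $n_2'-|S|$, and still has every element of $[m_2]$ in equally many labels; hence, by the correspondence of Section \ref{corrsection}, $G' = \tau'(P^*)$ for a regular partition $P^*$ of $K_{m^*(n_1),m_2(n_2'-|S|)}$. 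Taking $|S| = n_2'-n_2$, Corollary \ref{distparthypergraph} reduces the lemma to choosing $S$ so that $\tau(P^*)$ is asymmetric.

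I would first verify that such an $S$ exists. Its size $|S| = n_2'-n_2$ is bounded by a constant depending only on $n_1$ and $m_2$: this follows from the choice of $m^*$ together with Lemma \ref{nearopt}, using that in every case of the later sections $n_2'(m_1,n_1,m_2)$ changes by $O(1)$ when $m_1$ changes by $1$. On the other hand the trees in $\mathcal{T}^*$ are defect-free positive-value trees, so each carries the label $[m_2]$ on every vertex of degree at least $2$ and carries no label-$\emptyset$ vertex at all; and the number of tree components of $\Delta_{m^*(n_1),m_2}$, hence also the number of $\sim_\xi$-classes among them, tends to infinity with $m^*$, all but boundedly many of those components having at least two edges and so containing a degree-$\geq 2$ vertex labelled $[m_2]$. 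Thus there are far more admissible $[m_2]$-vertices than $|S|$, spread over far more than $|S|$ distinct $\sim_\xi$-classes, and $S$ can be chosen.

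The crux is showing $\tau(P^*)$ is asymmetric. Because the symmetry breaking ring $R$ was not altered, the lemma on symmetry breaking rings applies to it verbatim: any automorphism $\sigma$ of $\tau(P^*)$ sends $R$ to itself (it is the unique ring component), hence fixes each $v_i$, hence fixes each $n_2$-edge setwise, hence preserves every vertex label, so $\sigma$ is an automorphism of $G'$ as a labelled hypergraph. Now the components of $G'$ are pairwise non-isomorphic as labelled hypergraphs: an altered tree has a label-$\emptyset$ vertex, which no unaltered component possesses; two altered trees from distinct $\sim_\xi$-classes stay non-isomorphic because the original trees were and because every relabelled vertex carried the single label $[m_2]$, so any label-preserving isomorphism of altered trees would send relabelled vertices to relabelled vertices (degrees being preserved) and therefore respect the original labels; and the ring contains a cycle and so is isomorphic to no tree. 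Hence $\sigma$ fixes $R$, each tree component, and each degree-$0$ vertex, and acts on each as a labelled-hypergraph automorphism; this action is trivial on $R$, on the degree-$0$ vertices (each carries a distinct label), and on each unaltered tree by construction, and it is trivial on an altered tree by the argument just given, since it permutes the relabelled $[m_2]$-vertices among themselves, respects the original labels, and so is an automorphism of the original asymmetric tree. Therefore $\sigma$ is trivial, $\tau(P^*)$ is asymmetric, and $P^*$ is a distinguishing partition.

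I expect the genuine difficulty to be this asymmetry verification, and within it the two structural constraints it forces on $S$: leaving $R$ intact, so that every automorphism automatically fixes the $n_2$-edges, and relabelling only vertices that all carry the maximal label $[m_2]$ (and at most one per $\sim_\xi$-class of tree components), so that the relabellings create no accidental isomorphism between components. Bounding $|S|$, counting the admissible $[m_2]$-vertices, and confirming that $G'$ is genuinely $\tau'$ of a partition are routine, given the structure of $\Delta_{m^*(n_1),m_2}$, Lemma \ref{nearopt}, and Section \ref{corrsection}.
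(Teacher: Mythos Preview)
Your route differs from the paper's. The paper works with an arbitrary optimal $P'$, first argues that it has few components containing weight-$0$ vertices, and then chooses $S$ \emph{at random} from the weight-$m_2$ vertices lying in the larger half of the positive-value components, using a probabilistic argument to show that with high probability no altered component becomes isomorphic to one of the pre-existing weight-$0$ components. You instead take $\tau'(P')=\Delta_{m^*(n_1),m_2}$, relabel only $O(1)$ vertices deterministically, and verify asymmetry directly via the symmetry breaking ring. This is a legitimate alternative strategy, and arguably cleaner, though it proves a slight variant of the stated lemma since $\Delta$ need not attain the maximal $n_2'$; for the corollary that follows, that is harmless.

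There is, however, a genuine gap. You assert that ``the trees in $\mathcal{T}^*$ are defect-free positive-value trees'' and deduce from this that each such tree carries the label $[m_2]$ on every degree-$\geq 2$ vertex and no label $\emptyset$ anywhere. But $\mathcal{T}^*$ enumerates \emph{all} positive-value asymmetric hypergraphs, and by Corollary~\ref{valuecor} a tree can have up to $m_2-2j-1$ defects and still have positive value; such trees do occur as components of $\Delta$. Your asymmetry check rests on no unaltered tree component carrying an $\emptyset$-vertex, so that the presence of one singles out exactly the altered components. That conclusion is in fact correct---an $\emptyset$-labelled vertex forces $d(G)\geq m_2-j-1$, and unwinding Lemma~\ref{weightboundglobal} for a tree gives $v(G)\leq m_2-2j-2-(j+1)\mu(G)-d(G)\leq -j-1<0$ when $j\geq 0$, with the case $n_1=2$ handled by a direct count---but it requires its own argument and does not follow from defect-freeness. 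Likewise, your supply of $[m_2]$-labelled degree-$\geq 2$ vertices should be justified via the bound $d(G)\leq m_2-2j-1$ rather than via $d(G)=0$.
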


\begin{proof}
Certainly $P^*$ is a partition of $K_{m^*(n_1),m_2(n_2)}$.  It suffices to show that $S$ may be chosen so that $\tau(P^*)$ is asymmetric.  If $\tau'(P')$ contains $\Theta(n_2')$ vertices of degree at least $2$ of weight $m_2$ and none of weight $0$, then any set $S$ of size $n_2'-n_2$ of weight $m_2$ and degree at least $2$ vertices satisfies the desired property.  This condition is seen directly in all cases that $j = \lfloor (m_2-1)/2 \rfloor$, as $P'$ is constructed directly in subsequent sections.

It must be that $\tau'(P')$ has few components with weight $0$ vertices, and no components with many weight $0$ vertices.  Otherwise, if $n_1=2$, we could replace all those components and a largest defect-free component of $\tau'(P')$, if there is one, by a single defect-free asymmetric tree, which would increase the weight, a contradiction to Lemma \ref{nearopt}.  Otherwise, we could replace all those components and a largest symmetry breaking ring of $\tau'(P')$, if there is one, with a single symmetry breaking ring.  By Lemma \ref{treevalue} if $n_1 = 3$, and otherwise by Corollary \ref{valuecor}, this would increase the weight, also a contradiction to Lemma \ref{nearopt}.

It is shown in subsequent sections that for all sufficiently large $t$, there is an element of $\mathcal{T}^*$ with $t$ edges.  Thus there are $\Omega(\sqrt{m^*})$ elements of $\mathcal{T}^*$ of size up to $2\sqrt{m_1}$ that are not components of $\tau'(P')$.  It must be that all but $o(m^*)$ edges of $\tau'(P')$ are contained in positive weight components; otherwise, all components with nonpositive weight could be removed and replaced by $\Omega(\sqrt{m^*})$ components of positive weight, a contradiction to Lemma \ref{nearopt}.

Let $T_1, \ldots, T_a$ be the components of $\tau'(P')$ with weight $0$ vertices.  Let $T$ be a positive weight component with $t$ vertices.  Then $T$ has $\Theta(t)$ vertices of degree at least $2$, all but at most $m_2$ of which have weight $m_2$, and thus $\tau'(P')$ has $\Theta(m^*)$ vertices in positive weight components with weight $m_2$.  Let $S$ be a subset of size $n_2'-n_2$, chosen uniformly at random, of the weight $m_2$ vertices that are contained in the larger half of positive weight components.  Let $G'$ be the hypergraph that results from changing the labels of all vertices of $S$ in $\tau'(P')$ from $[m_2]$ to $\emptyset$.

Every component $T$ of $G'$ that contains a vertex of $S$ is asymmetric, since it was constructed by dividing the set of vertices labelled $[m_2]$ into vertices labelled $[m_2]$ and $\emptyset$, and it had no vertex labelled $\emptyset$ previously.  $T$ is not isomorphic to another component $T'$ that contains a vertex of $S$ since the hypergraph that results from changing all vertices of $T$ of label $\emptyset$ to $[m_2]$ is nonisomorphic to the hypergraph that results from changing all vertices of $T'$ of label $\emptyset$ to $[m_2]$.  To conclude, we need to show that with high probability, $T_i$ is not isomorphic to any component of $G'$ for each $1 \leq i \leq a$, since $a$ is small.

If $T_i$ is isomorphic to $T$, a component of $G'$ that contains a vertex of $S$, then it must be that the hypergraphs $T_i^*$ and $T^*$, which result from converting all vertices of $T_i$ and $T$ of label $\emptyset$ to $[m_2]$, are isomorphic.  Thus $T_i^*$ is isomorphic to a component $T^*$ of $\tau'(P')$.  Since $T^*$ is asymmetric, for all subsets $S'$ of weight $m_2$ vertices of $T^*$, the hypergraphs that result from converting all vertices of $S'$ from label $[m_2]$ to $\emptyset$ are nonisomorphic.  Since $T^*$ is large, the probability that $T_i$ is isomorphic to some other component of $G'$ is small.
\end{proof}

\begin{lemma}
If $K_{m^*(n_1),m_2(n_2)}$ has a distinguishing partition and $m_1 > m^*$, then $K_{m_1(n_1),m_2(n_2)}$ has a distinguishing partition.
\end{lemma}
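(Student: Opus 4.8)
The plan is to pass to the hypergraph picture of Corollary \ref{distparthypergraph}: it suffices, given an asymmetric $2^{[m_2]}$-vertex-labeled hypergraph realizing a distinguishing partition of $K_{m^*(n_1),m_2(n_2)}$, to produce one with $m_1$ rather than $m^*$ edges of size $n_1$, the same $m_2$ edges of size $n_2$, and hence the same total weight $m_2n_2$. First I would note that we may take the starting hypergraph to be $\tau'(P^*)$ for the partition $P^*$ of $K_{m^*(n_1),m_2(n_2)}$ built in the previous lemma; since $P^*$ is obtained from $\Delta_{m^*(n_1),m_2}$ by relabeling some degree-$\geq 2$ vertices inside large positive-weight components, $\tau'(P^*)$ has $\Theta(n_2)$ vertices of weight $m_2$ and degree at least $2$ and has no component all of whose vertices have weight $0$. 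Because the total weight is pinned at $m_2n_2$, the $m_1-m^*$ new $n_1$-edges must together carry weight $0$, i.e.\ every genuinely new vertex must receive the label $\emptyset$; and since an asymmetric hypergraph has at most one degree-$1$ vertex of a given label in each edge, each new edge may contain at most one new vertex.

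For all but finitely many values of $m_1-m^*$ — say $m_1-m^* \geq t_0 = t_0(n_1,m_2)$ — I would simply adjoin a new connected component $L$: an asymmetric $n_1$-uniform hypergraph all of whose vertices carry the label $\emptyset$, with exactly $m_1-m^*$ edges and at most one degree-$1$ vertex per edge. Such gadgets exist for every sufficiently large number of edges (for $n_1=2$ a large asymmetric tree works; for $n_1=3$ one may decorate a large asymmetric graph with a pendant vertex on each edge; for general $n_1$ one uses a ring-with-spokes construction much like the symmetry-breaking rings of Section \ref{modelconst}, with extra spoke edges breaking the rotational symmetry in place of the vertex labels). Since $\tau'(P^*)$ has no all-weight-$0$ component, $L$ is automatically non-isomorphic to every component of $\tau'(P^*)$, and asymmetry of $\tau'(P^*) \sqcup L$ is then immediate: any automorphism fixes $L$ setwise, hence restricts to automorphisms of $\tau'(P^*)$ and of $L$, both trivial. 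No index-count has changed, so this encodes a partition of $K_{m_1(n_1),m_2(n_2)}$, and we conclude by Corollary \ref{distparthypergraph}.

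The remaining, and principal, obstacle is the finitely many small cases $1 \leq m_1-m^* < t_0$, where no disjoint asymmetric weight-$0$ component of that size exists. Here I would instead graft a pendant chain of $m_1-m^*$ $n_1$-edges onto a weight-$m_2$, degree-$\geq 2$ vertex of a large positive-weight component, giving its boundedly many forced degree-$1$ vertices distinct nonempty labels so the chain is locally rigid, and then compensate for the bounded extra weight $W$ it introduces by relabeling $W/m_2$ of the $\Theta(n_2)$ available weight-$m_2$ degree-$\geq 2$ vertices to $\emptyset$ — arranging the labels so that all $m_2$ index-counts, the total weight, and hence $n_2$ itself, are unchanged. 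The delicate point is to choose the chain's labels and the relabeled vertices (for instance at pairwise distinct distances from a landmark inside one host component) so that the enlarged hypergraph remains asymmetric: a newly created weight-$0$ vertex of degree $\geq 2$ must not be exchangeable with another, and the host component must not become isomorphic to any other component. This needs only the structural facts about $\tau'(P^*)$ recorded above, but it is the step where the bookkeeping is heaviest.
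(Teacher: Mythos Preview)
Your plan is a genuinely different route from the paper's, and the difference is instructive. You work in the labeled picture $\tau'(P^*)$, split into a ``large gap'' case (adjoin a disjoint all-$\emptyset$ asymmetric component) and a ``small gap'' case (graft a chain, then relabel to restore the weight), and you rely throughout on the special structure of $P^*$ inherited from $\Delta_{m^*(n_1),m_2}$. The large case is fine. The small case, however, is not just heavy bookkeeping---it is incomplete as written: you have not explained how to choose the chain's labels so that the extra weight $W$ is a multiple of $m_2$ with each index $i\in[m_2]$ hit equally often, nor how to select the $W/m_2$ relabeled vertices so that no two become exchangeable and no component becomes isomorphic to another. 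These can likely be arranged, but they are genuine obligations, not formalities.

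The paper avoids all of this with a one-line induction in the unlabeled picture $\tau(P)$. It suffices to pass from $m^*$ to $m^*+1$. Call a \emph{tail} of $\tau(P)$ a chain of $n_1$-edges $\{v_i,\ldots,v_{n_1+i-1}\}$, $1\le i\le t$, whose last $t$ vertices lie in no edge outside the chain (in particular in no $n_2$-edge). Pick a tail with $t$ maximal, then append one new vertex $v_{n_1+t}$ and one new $n_1$-edge $\{v_{t+1},\ldots,v_{n_1+t}\}$. In the resulting hypergraph $G'$, the vertex $v_{n_1+t}$ is the unique degree-$1$ vertex at the end of a maximum-length tail, hence fixed by any automorphism; this fixes the new edge, and the rest is fixed because $\tau(P)$ was asymmetric.

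What this buys over your approach: the single new vertex lies in no $n_2$-edge, so it carries label $\emptyset$ and adds zero weight---no compensation or relabeling is ever needed---and the ``maximal tail'' device certifies asymmetry in one stroke, for \emph{any} starting distinguishing partition $P$, with no case split, no appeal to the internal structure of $P^*$, and no auxiliary asymmetric gadgets to construct.
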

\begin{proof}
It suffices to prove the lemma for $m_1 = m^*+1$.  Let $P$ be a distinguishing partition of $K_{m^*(n_1),m_2(n_2)}$.  Define a \textit{tail T of} $\tau(P)$ to be a sequence of vertices $v_1, \ldots, v_{n_1+t-1}$ and edges $\{v_i, \ldots, v_{n_1+i-1}\}$ for $1 \leq i \leq t$, such that $v_{n_1}, \ldots, v_{n_1+t-1}$ are contained in no edges outside of $T$.  Assume that $T$ is chosen so that $t$ is maximal.  Then add an vertex $v_{n_1+t}$ and an edge $\{v_{t+1},\ldots,v_{n_1+t}\}$ to $\tau(P)$ to create a hypergraph $G'$.

We show that $G'$ is asymmetric.  By construction, $v_{n_1+t}$ is the only degree $1$ vertex contained in a maximum tail of $G'$, and thus it is a fixed point.  Then the edge $\{v_{t+1},\ldots,v_{n_1+t}\}$ is fixed, since it is the only edge to contain $v_{n_1+t}$.  Thus all other vertices and edges are fixed as well, since $\tau(P)$ is asymmetric.  It follows that $K_{m_1(n_1),m_2(n_2)}$ has a distinguishing partition.
\end{proof}

We summarize the preceding lemmas as follows.

\begin{corollary}
If $m_1$ is large relative to $n_1$ and $m_2$, let $n_2'$ be the largest value such that $K_{m_1(n_1),m_2(n_2')}$ has a distinguishing partition.  Then $K_{m_1(n_1),m_2(n_2)}$ has a distinguishing partition if $n_2 \leq n_2'$.
\end{corollary}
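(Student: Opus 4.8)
The plan is to assemble the three preceding lemmas into a single monotonicity statement. The Corollary claims that the set of $n_2$ for which $K_{m_1(n_1),m_2(n_2)}$ has a distinguishing partition is a downward-closed interval $\{n_2 : n_2 \le n_2'\}$, where $n_2'$ is the maximal feasible value. First I would fix $m_1$ large relative to $n_1$ and $m_2$, and let $n_2'$ be as defined in the statement, so that $K_{m_1(n_1),m_2(n_2')}$ has a distinguishing partition by definition. It then suffices to show that for every $n_2 \le n_2'$, the graph $K_{m_1(n_1),m_2(n_2)}$ also has one. The large cases, where $n_2$ is close to $n_2'$, and the small cases, where $n_2$ is tiny, are handled by different lemmas, and I would split on this.

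If $n_2$ is small relative to $n_1$ and $m_2$, the penultimate lemma (take an asymmetric hypergraph on $m_1$ $n_1$-edges, which exists by \cite{distpart}, and glue on $m_2$ full $n_2$-edges) directly yields a distinguishing partition, so that case is immediate. Otherwise $n_2$ is large; here I would invoke the construction leading to Lemma \ref{ratio1} and the two lemmas following it. Choose $m^*$ maximal with $n_2'(m^*,n_1,m_2) \ge n_2$; by Lemma \ref{ratio1} the ratio $n_2'(m^*,n_1,m_2)/n_2$ is arbitrarily close to $1$ for $n_2$ large, and in particular $m^* \le m_1$ (since $n_2 \le n_2' = n_2'(m_1,n_1,m_2)$ forces $m^* \ge$ something, and maximality of $m^*$ together with monotonicity of $n_2'$ in its first argument gives $m^* \le m_1$). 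The label-deletion lemma (the one producing the random subset $S$ of weight-$m_2$ vertices whose relabelling from $[m_2]$ to $\emptyset$ keeps $\tau(P^*)$ asymmetric) shows $K_{m^*(n_1),m_2(n_2)}$ has a distinguishing partition. Finally the tail-extension lemma — appending a maximal tail and one extra edge preserves asymmetry — upgrades this from $m^*$ to any $m_1 > m^*$, i.e.\ to $m_1$ itself.

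Concretely, the steps in order are: (1) reduce to showing feasibility for all $n_2 \le n_2'$; (2) dispose of small $n_2$ via the asymmetric-hypergraph-plus-full-edges construction; (3) for large $n_2$, pick $m^*$ maximal with $n_2'(m^*,n_1,m_2) \ge n_2$ and verify $m^* \le m_1$ using monotonicity of $n_2'$ in $m_1$ and Lemma \ref{ratio1}; (4) apply the label-deletion lemma to get a distinguishing partition of $K_{m^*(n_1),m_2(n_2)}$; (5) apply the tail-extension lemma repeatedly (formally, induct on $m_1 - m^*$) to conclude $K_{m_1(n_1),m_2(n_2)}$ has a distinguishing partition.

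The main obstacle is not in any single step — each invoked lemma does its own heavy lifting — but in the bookkeeping that ensures the ``large $n_2$'' and ``small $n_2$'' regimes actually cover all of $n_2 \le n_2'$ with no gap, and that $m^* \le m_1$ genuinely holds so the tail-extension step applies in the right direction. The delicate point is that ``$n_2$ small'' in the first sublemma means small relative to $n_1,m_2$ but \emph{uniformly in} $m_1$, while ``$n_2$ large'' is exactly the complementary range; I would make explicit that for $m_1$ large the threshold value of $n_2$ below which the easy construction works is a constant depending only on $n_1,m_2$, and that the chain of lemmas for large $n_2$ is valid for every $n_2$ above that constant, so the two ranges overlap and the union is all feasible $n_2$. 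Once that is pinned down, the Corollary is just the concatenation of the stated lemmas.
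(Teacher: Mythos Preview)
Your proposal is correct and follows exactly the paper's approach: the paper simply writes ``We summarize the preceding lemmas as follows'' before stating the Corollary, and your assembly of the small-$n_2$ lemma, the choice of $m^*$, the label-deletion lemma, and the tail-extension lemma is precisely that summary spelled out.

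One minor point worth tightening: both you and the paper write ``choose $m^*$ maximally so that $n_2'(m^*,n_1,m_2) \ge n_2$'', but since $n_2'(\cdot,n_1,m_2)$ is weakly increasing (by the tail-extension lemma itself), no maximal such $m^*$ exists. The intended choice is \emph{minimal}, which immediately gives $m^* \le m_1$ from $n_2 \le n_2'(m_1,n_1,m_2)$ and makes Lemma~\ref{ratio1} yield $n_2'(m^*,n_1,m_2)/n_2 \to 1$ via $n_2'(m^*-1,n_1,m_2) < n_2$. You correctly flagged the inequality $m^* \le m_1$ as the delicate bookkeeping step; fixing the direction of the extremum makes your justification clean.
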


\section{$n_1 = 2$}
\label{n1is2}

In this section we consider the case that $n_1 = 2$.  A labelled connected $2$-uniform hypergraph has positive value only if it is a tree with fewer than $m_2$ defects.  Furthermore, all trees without defects have positive value.

Our bounds and construction requires an estimate on the number of asymmetric ordinary trees, which is provided by the twenty-step algorithm of Harary, Robinson, and Schwenk.

\begin{lemma}
\label{ordinarytreecount}
There exists constants $\alpha > 0$ and $\beta > 1$ such that the number of asymmetric trees on $i$ edges is $(1+o_i(1))\alpha \beta^i i^{-5/2}$.  Furthermore, there exists $\alpha' > 0$ such that the number of asymmetric rooted trees on $i$ edges is $(1+o_i(1))\alpha' \beta^i i^{-3/2}$.
\end{lemma}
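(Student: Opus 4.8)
The plan is to invoke the classical asymptotic enumeration of asymmetric (identity) trees, which is exactly what the Harary--Robinson--Schwenk ``twenty-step algorithm'' delivers, and to record the two constants $\alpha,\beta$ (and the rooted-tree constant $\alpha'$) that will be used throughout Section \ref{n1is2}. The asymptotics follow from singularity analysis of the appropriate generating functions, so the first step is to set up those generating functions. Let $\mathfrak{a}^{\cdot}(x) = \sum_i |\mathfrak{a}^{\cdot}_{i+1}| x^i$ be the ordinary generating function counting rooted asymmetric trees by number of edges. Using the species-theoretic decomposition of a rooted tree into its root together with the (unordered, \emph{distinct}) collection of rooted subtrees hanging below it, one obtains a functional equation of the form
\[
\mathfrak{a}^{\cdot}(x) = x\prod_{i\geq 1}\bigl(1 + x^{?}\bigr)^{\ldots}
\]
more precisely the Euler-type product that encodes ``a set of pairwise non-isomorphic rooted subtrees''; equivalently $\mathfrak{a}^{\cdot}(x) = x\,\exp\!\bigl(\sum_{k\geq 1} \frac{(-1)^{k+1}}{k}\mathfrak{a}^{\cdot}(x^k)\bigr)$. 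The key analytic facts are that $\mathfrak{a}^{\cdot}$ has a finite, positive radius of convergence $\rho$, that it has a square-root singularity at $x=\rho$, and that $\rho$ is the unique dominant singularity on the circle of convergence.

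Next I would apply the standard square-root singularity transfer theorem (Flajolet--Odlyzko): if $\mathfrak{a}^{\cdot}(x) = g(x) - h(x)\sqrt{1 - x/\rho}$ near $x=\rho$ with $g,h$ analytic and $h(\rho)\neq 0$, then $[x^i]\mathfrak{a}^{\cdot}(x) \sim \frac{h(\rho)}{2\sqrt{\pi}}\,\rho^{-i} i^{-3/2}$, which gives the rooted count as $(1+o_i(1))\alpha'\beta^i i^{-3/2}$ with $\beta = 1/\rho$ and $\alpha' = h(\rho)/(2\sqrt{\pi})$. For the unrooted count one passes from rooted to unrooted trees via a dissimilarity-type relation (the tree analogue of $\text{trees} = \text{rooted trees} - \text{edge-rooted trees} + \text{symmetry corrections}$); for \emph{asymmetric} trees the symmetry corrections vanish, so $\mathfrak{a}(x)$ is expressible through $\mathfrak{a}^{\cdot}(x)$ and $\mathfrak{a}^{\cdot}(x)^2$, and the same $\rho$ remains the dominant singularity but now the leading singular term is of order $(1-x/\rho)^{3/2}$, yielding an extra factor of $i^{-1}$ and hence the count $(1+o_i(1))\alpha\beta^i i^{-5/2}$ with the same $\beta$.

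The main obstacle — and the reason this is stated as a lemma to be cited rather than reproved from scratch — is verifying the analytic hypotheses of the transfer theorem: that $\mathfrak{a}^{\cdot}$ genuinely has a square-root (not some other algebraic) singularity, that $\rho>0$, and that there is no other singularity of equal modulus (aperiodicity). These are precisely the points handled by the Harary--Robinson--Schwenk analysis, so I would simply cite their result for the existence of $\alpha,\beta,\alpha'$ with the stated exponents $-5/2$ and $-3/2$, noting only that $\beta$ is common to both asymptotics because rooted and unrooted asymmetric trees share the same generating-function radius of convergence. No further computation is needed; the numerical values of $\alpha,\beta,\alpha'$ are not required for the subsequent arguments, only their existence and positivity.
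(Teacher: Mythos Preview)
Your proposal is correct and takes essentially the same approach as the paper: the paper does not prove this lemma at all but simply attributes it to the twenty-step algorithm of Harary, Robinson, and Schwenk \cite{twentysteps}, so your plan to cite that result (after sketching the singularity-analysis machinery behind it) is exactly in line with what the paper does, only more expansive.
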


\proofof{Theorem \ref{n1is2thm}}
Recall that $$z = \left\lfloor {\log_{\beta}\left(\frac{m_1(\beta-1)}{\alpha \beta}\left({\log_\beta m_1}\right)^{3/2}\right)} \right\rfloor.$$

Summing the result of Lemma \ref{ordinarytreecount} from $1$ to $z$, the number of nonisomorphic asymmetric trees with at most $z$ edges is $(\alpha \frac{\beta}{\beta-1} + o(1)) \beta^z z^{-5/2}$, and they collectively have $(\alpha \frac{\beta}{\beta-1} + o(1)) \beta^z z^{-3/2} \leq (1+o(1))m_1$ edges.  Similarly, the collective number of edges of nonisomorphic asymmetric trees with at most $z+1$ edges is at least $(1+o(1))m_1$, and with at most $z+2$ edges exceeds $m_1$.  Each edge of a defect-free tree on $z+2$ edges has value $m_2/(z+2)$, and thus all edges of $\Delta_{m_1(n_1),m_2}$ have value at least $m_2/(z+2)$, except edges in the symmetry breaking ring.

We show that $\Delta_{m_1(n_1),m_2}$ has $\frac{m_1}{z+1} + (1+o_{m_1}(1))\alpha \beta^z z^{-7/2}\left(\frac{\beta}{\beta-1}\right)^2$ components of value $m_2$ and $o(\beta^z z^{-7/2}) = o(m_2/z^2)$ components of lesser value.  The latter statement follows by Lemma \ref{n1is2lemma}.  Thus in fact $\Delta_{m_1(n_1),m_2}$ contains $(1+o_i(1))\alpha' \beta^i i^{-3/2}$ defect-free components on $i$ edges for $1 \leq i \leq z$, $o(m_1/z)$ defect-free components on $z+2$ edges, $o(m_1/z^2)$ components of other types, and all remaining components are defect-free on $z+1$ edges.

For every edge $e \in E(\Delta_{m_1(n_1),m_2})$, let $v^*(e) = v(e) - m_2/(z+1)$.  For $1 \leq i \leq z$, the sum of $v^*(e)$ over all edges $e$ in components with $z+1-i$ edges is thus $$(m_2+o(1))\alpha \beta^{z+1-i} (z+1-i)^{-3/2}(1/(z+1-i)-1/(z+1)).$$  By $\beta^z z^{-7/2} = \Theta(m_1 / \log^2 m_1)$, the preceding sum is $$m_2\alpha \beta^{z+1-i}z^{-3/2}(i/z^2) + o(m_1\beta^{-i}/\log^2 m_1)$$ for $i < z / \log z$, and otherwise $$m_2\alpha \beta^{z+1-i}z^{-3/2}(i/z^2) + o(m_1/\log^3 m_1),$$ which is observed by noting that $\beta^{z-z/\log z} = O((m_2 \log(m_1)^{3/2})^{1-1 / \log z})$, and that $m_1^{1 / \log z} > m_1^{i \log \log m_1 / \log m_1} = \log^i(m_1)$ for all fixed $i$.

The sum of $v^*(e)$ over all edges $e$ in components with either $z+2$ edges or with defects is $o(m_1/\log^2(m_1))$, whereas $v^*(e) = 0$ if $e$ is in a defect-free component with $z+1$ edges.  We conclude that $$\sum_{e \in E(\tau'(P))}v^*(e) = \alpha \beta^z z^{-7/2}\sum_{i=0}^z \beta^{-i}(i+1) + o(m_1 / \log^2 m_1)$$ $$= \alpha \beta^z z^{-7/2} \frac{\beta^2}{(\beta-1)^2} + o(m_1 / \log^2 m_1).$$

Thus $$\sum_{e \in E(\tau'(P))}v(e) = \frac{m_1}{z+1} + \alpha \beta^z z^{-7/2} \frac{\beta^2}{(\beta-1)^2} + o(m_1 / \log^2 m_1),$$ which implies that $G$ has the desired number of components of value $m_2$.
\endproof

The proof of Lemma \ref{n1is2lemma} makes use of the species $L(\mathfrak{a^\cdot})$ of ordered sets of rooted asymmetric trees: an element of $L(\mathfrak{a^\cdot})$ on $z'$ elements is given by order partitioning $[z']$ into subsets and taking an $\mathfrak{a}^\cdot$-structure on each subset.

\begin{lemma}
\label{n1is2lemma}
There are $o(m_1/z^2)$ components of $\Delta_{m_1(n_1),m_2}$ with defects.
\end{lemma}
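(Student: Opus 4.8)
The plan is to count the defective components of $\Delta_{m_1(n_1),m_2}$ directly from its construction and show their number is negligible compared to $m_1/z^2$. Recall that $\Delta_{m_1(n_1),m_2}$ is built by taking the trees from $\mathcal{T}_1 \cup \cdots \cup \mathcal{T}_\zeta$ — that is, all positive-value asymmetric labelled hypergraphs with edge value at least $v_{\rho+1} \approx m_2/(z+1)$ — plus one symmetry breaking ring. A component is defect-free iff every degree-$\geq 2$ vertex has weight $m_2$ and the degree-$1$ vertices of every edge attain the weight bound $w_{\deg_1(e)}$; a \emph{defective} component is any positive-value component of $\mathcal{T}^*$ that fails this. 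So what I must bound is the number of edges lying in defective positive-value equivalence classes in $\mathcal{T}_1 \cup \cdots \cup \mathcal{T}_\zeta$ (the single symmetry breaking ring contributes $O(1)$ components and is harmless).

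First I would invoke Lemma \ref{treevalue}: since $n_1 = 2$ here, a tree $G$ with $l$ leaves, $\mu(G)$ branching excess, and $d \geq 1$ defects satisfies $v(G) \leq m_2 - 2\cdot(-1) - l - j\mu(G) - d$; with $j = -1$ (as $j_{2,m_2} = -1$) this is $v(G) \leq m_2 + 2 + \mu(G) - l - d$, and combined with Lemma \ref{leafcount} ($l \geq \mu(G)+2$) we get $v(G) \leq m_2 - d \leq m_2 - 1$. Hence a defective component has value strictly less than $m_2$, and for its edges to appear in $\Delta_{m_1(n_1),m_2}$ each edge must still have value at least $\approx m_2/(z+1)$, forcing $|E(G)| \leq (m_2 - 1)/(m_2/(z+1)) \approx z+1$ — so defective components have at most $z + O(1)$ edges, and in fact their per-edge value being this high forces $\mu(G)$, $l$, and $d$ all to be small (bounded in terms of $m_2$, since $v(G) = m_2 - (l - \mu(G) - 2)\cdot(\text{something}) - d$ must stay within an additive $O(m_2)$ of $m_2 \cdot |E(G)|/(z+1) \leq m_2$... actually within $O(m_2/z)$ of $m_2$ once $|E(G)|$ is close to $z$).

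The key step is then a \emph{counting} bound: the number of asymmetric rooted/unrooted defective labelled trees on $i \leq z + O(1)$ edges is at most a polynomial-in-$i$ factor times the number of \emph{defect-free} such trees, because a defective tree is obtained from a defect-free "skeleton" by degrading a bounded number ($\leq m_2$, from the value constraint) of vertex labels in bounded ($\leq m_2$) positions — there are only $i^{O(m_2)}$ such choices. Since, by Lemma \ref{ordinarytreecount}, the number of defect-free (asymmetric, essentially ordinary) trees on $i$ edges is $\Theta(\beta^i i^{-5/2})$, the number of defective components on $i$ edges is $O(\beta^i i^{O(m_2)})$, and summing over $i \leq z + O(1)$ gives $O(\beta^z z^{O(m_2)})$ components. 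Now I would compare: by the proof of Theorem \ref{n1is2thm}, $\beta^z z^{-7/2} = \Theta(m_1/\log^2 m_1)$, so $\beta^z = \Theta(m_1 z^{7/2}/\log^2 m_1)$ and $\beta^z z^{O(m_2)} = \Theta(m_1 z^{O(m_2)}/\log^2 m_1)$. This is \emph{not} automatically $o(m_1/z^2)$ — this is where the argument needs more care.

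The main obstacle, then, is the crude $i^{O(m_2)}$ count: I need a sharper bound showing defective trees are \emph{exponentially} rarer in the relevant range, not just polynomially. The right move is to exploit that a defective component of value $\geq m_2 - m_2/z \cdot O(1)$ with $\sim z$ edges can have only $O(1)$ defects (independent of $z$, since each defect costs $1$ in value and already $|E(G)| \approx z$ pins the value budget), so it is a defect-free tree on $\sim z$ edges with $O(1)$ marked "degraded sites"; the number of ways to mark $O(1)$ sites in a tree on $i$ edges is $O(i^{O(1)})$ — a \emph{fixed} polynomial degree, not growing with $m_2$ — times $O(1)$ label choices per site. So the count of defective components on $i$ edges is $O(\beta^i i^{c})$ for an absolute constant $c$, and actually only those with $i$ very close to $z$ (say $i \geq z - O(\log z)$, since smaller $i$ makes the per-edge value exceed $m_2/(z+1)$ comfortably only if... ) contribute; here I would instead argue directly that defective components have total edge count $o(\beta^z z^{-7/2} \cdot z) = o(m_1/z)$ and total component count $o(\beta^z z^{-7/2}) = o(m_1/z^2)$ by matching against the defect-free count with the extra polynomial factor absorbed into the $z^{-5/2}$ versus needing $z^{-7/2}$, i.e. the defective count is smaller by a factor $\Theta(z^{-1})$ coming from the constrained value budget forcing $|E(G)|$ within $O(\log z)$ of its maximum. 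I would carry this out by summing $\sum_{i} \beta^i i^{c} \mathbf{1}[i \leq z+O(1)]$ and observing that the dominant term $\beta^{z} z^c$ must be checked against $\beta^z z^{-7/2} = \Theta(m_1/\log^2 m_1)$; the clean resolution is that the number of defective classes in $\mathcal{T}^*$ with edge value exactly $v_\rho$ near the cutoff is bounded by the number of defect-free trees of that size already counted in Theorem \ref{n1is2thm}'s $o(m_1/z)$ error term for size-$(z+2)$ components, so it too is $o(m_1/z^2)$ in component count. That comparison — showing the defective classes are subsumed by an already-negligible error term from the main theorem's proof — is the cleanest path and the one I would ultimately take.
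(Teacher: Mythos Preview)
Your proposal has a genuine gap at the crucial step. You correctly compute that a defective component $G$ has value at most $m_2-1$, and hence $|E(G)| \le (m_2-1)/(m_2/(z+1)) = \tfrac{m_2-1}{m_2}(z+1)$. But you then write ``$\approx z+1$'' and proceed as if defective components have ``at most $z+O(1)$ edges.'' This throws away the factor $\tfrac{m_2-1}{m_2} < 1$, which is the entire point. With only the bound $|E(G)| \le z + O(1)$, your counting gives (at best) $O(\beta^z z^{c})$ defective components for some constant $c \ge 0$, and since $m_1/z^2 \asymp \beta^z z^{-7/2}$ this is hopelessly too large. Your later attempts to recover --- restricting to an $O(\log z)$ window near the cutoff, or absorbing the count into the $o(m_1/z)$ error term from Theorem~\ref{n1is2thm} --- are not justified and do not close the gap; the error term in Theorem~\ref{n1is2thm} for $(z+2)$-edge components is itself derived \emph{using} Lemma~\ref{n1is2lemma}, so that comparison is circular.

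The paper's proof exploits precisely the factor you dropped: since $|E(G)| \le \tfrac{m_2-1}{m_2}(z+2)$, it suffices to bound the number of positive-value components on $z'$ edges by $O((\beta')^{z'})$ for some fixed $\beta' < \beta^{m_2/(m_2-1)}$; then summing over $z' \le \tfrac{m_2-1}{m_2}(z+2)$ gives $O\bigl((\beta'^{(m_2-1)/m_2})^{z}\bigr)$, which is exponentially smaller than $\beta^z$ and hence $o(\beta^z z^{-N})$ for every $N$. To get that exponential bound the paper does \emph{not} use your ``defect-free skeleton plus degraded sites'' idea (which is also problematic: restoring all labels to $[m_2]$ can destroy asymmetry, so the skeleton need not be counted by the $\Theta(\beta^i i^{-5/2})$ asymptotic). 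Instead it takes the subtree $G'$ spanned by the non-$[m_2]$-labelled vertices --- a tree with at most $m_2-1$ leaves, so only polynomially many choices --- and reconstructs $G$ by hanging a rooted asymmetric tree at each vertex of $G'$; this bounds the count by $p(z')\,|L(\mathfrak{a}^\cdot)_{z'+1}|$, and a separate inductive argument shows $|L(\mathfrak{a}^\cdot)_{z'+1}| = O((\beta^*)^{z'})$ for any $\beta^* > \beta$.
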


\proof
If $G$ is a component of $\Delta_{m_1(n_1),m_2}$ with a defect, then the value of $G$ is at most $m_2-1$, and thus since the edges of $G$ have value at least $m_2/(z+2)$, then $G$ has at most $\frac{m_2-1}{m_2}(z+2)$ edges.  Thus we need to show that the number of components with defects on at most $\frac{m_2-1}{m_2}(z+2)$ edges in $\Delta_{m_1(n_1),m_2}$ is $o(\beta^z z^{-9/2})$.  It suffices to show that the number of components with positive value on $z'$ edges is $O((\beta')^{z'})$ for fixed $\beta < \beta' < \beta^{m_2/(m_2-1)}$ and $z' < \frac{m_2-1}{m_2}(z+2)$, since $\beta^{z'} < \beta^z / z^i$ for fixed $i$.

If $G$ is a component of positive value with $z'$ edges, then by Lemma \ref{treevalue} all but at most $m_2-1$ vertices of $G$ are labelled $[m_2]$.  Let $G'$ be the subgraph of $G$ that is the union of all paths between vertices not labelled $[m_2]$.  Only vertices not labelled $[m_2]$ are leaves in $G'$, and each leaf has one of $2^{m_2}-1$ labels, and thus the number of such $G'$ that may result is at most a polynomial in $z'$, say $p(z')$.  We may reconstruct $G$ from $G'$ by replacing every vertex $v \in G'$ with a rooted asymmetric tree with root $v$.  Thus, since $G$ is determined by $G'$ and an ordered set of asymmetric trees on a total of $z'+1$ vertices, there are at most $p(z')|L(\mathfrak{a^\cdot})_{z'+1}|$ components on $z'$ edges.

Choose fixed $\beta < \beta^* < \beta'$.  The lemma follows by showing that $|L(\mathfrak{a^\cdot})_{z'+1}| = O((\beta^*)^{z'})$.  We show inductively on $z'$ that $|L(\mathfrak{a^\cdot})_{z'+1}| \leq \gamma(\beta^*)^{z'}$ for some sufficiently large $\gamma$.

Note the recursion $L(\mathfrak{a^\cdot}) = E_0 + \mathfrak{a}^\cdot L(\mathfrak{a^\cdot})$: every ordered set of rooted trees is either the empty set or a rooted tree followed by another ordered set of rooted trees.  Thus for $z' \geq 1$, $$|L(\mathfrak{a^\cdot})_{z'+1}| = \sum_{i=1}^{z'+1} |\mathfrak{a_i^{\cdot}}||L(\mathfrak{a^\cdot})_{z'+1-i}| \leq \sum_{i=1}^{z'+1} |\mathfrak{a}^{\cdot}_i| \gamma (\beta^*)^{z'+1-i}.$$

Let $\mathfrak{a}^\cdot_{\leq z'/3}$ be the species of rooted asymmetric trees on at most $z'/3$ vertices.  Observe that $|(\mathfrak{a}^\cdot_{\leq z'/3} \mathfrak{a}^\cdot)_{z'+1}| \leq |\mathfrak{a}^\cdot_{z'+1}|$: given an asymmetric rooted tree $T$ on $i \leq z'/3$ vertices and another $T'$ on $z'+1-i$ vertices, a third tree may be constructed by adjoining $T'$ to $T$ so that the root of $T'$ is forgotten and place adjacent to the root of $T$.  This construction allows $T$ and $T'$ to be uniquely determined.  Thus by Lemma \ref{ordinarytreecount}, $$\sum_{i=1}^{\lfloor z'/3 \rfloor} |\mathfrak{a}^\cdot_i| |\mathfrak{a}^\cdot_{z'+1-i}| \leq (1+o(1))\alpha' \beta \beta^{z'} z'^{-3/2}.$$  But also, $$\sum_{i=1}^{\lfloor z'/3 \rfloor} |\mathfrak{a}^\cdot_i| |\mathfrak{a}^\cdot_{z'+1-i}| \geq \alpha' \beta \beta^{z'} z'^{-3/2}(1+o(1))\sum_{i=1}^{\lfloor z'/3 \rfloor} |\mathfrak{a}^\cdot_i |\beta^{-i}.$$ Thus $$\sum_{i=1}^{\lfloor z'/3 \rfloor} |\mathfrak{a}^\cdot_i |\beta^{-i} \leq 1+o(1) \quad \mbox{and} \quad \sum_{i=1}^{\lfloor z'/3 \rfloor} |\mathfrak{a}^\cdot_i |{(\beta^*)}^{-i} < \beta/\beta^*.$$

Thus $\sum_{i=1}^{z'+1} |\mathfrak{a}_i^{\cdot}| \gamma (\beta^*)^{z'+1-i} \leq$ $$\gamma \beta(\beta^*)^{z'} + \sum_{i = \lfloor z'/3 \rfloor + 1}^{z'+1} |\mathfrak{a}_i| \gamma (\beta^*)^{z'+1-i} = $$ $$\gamma \beta (\beta^*)^{z'} + (1+o(1))\sum_{i = \lfloor z'/3 \rfloor + 1}^{z'+1} \alpha' \beta^i i^{-3/2}\gamma (\beta^*)^{z'+1-i} < \gamma (\beta^*)^{z'+1}$$ as desired.
\endproof

\section{$k=0$ and $j < \lfloor (m_2-1)/2 \rfloor$}
\label{kis0jsmall}

In this section we consider the case that $k = 0$ and $j < \lfloor (m_2-1)/2 \rfloor$.  The value of $C$ defined in the statement of Theorem \ref{kis0jsmallthm} is determined as follows.  If $0 < j < m_2/2 - 3/2,$ then

$$C = {m_2 \choose j+1}^{m_2-2j-1}{m_2 \choose j}^{m_2-2j-3}{2m_2 - 4j - 4 \choose m_2 -2j - 3}\frac{2^{-m_2+2j+3}}{(2m_2-4j-4)!}.$$
If $j= 0$ and $m_2 > 3$, then
$$C = m_2^{m_2-1}{2m_2 - 4 \choose m_2 - 3}\frac{1}{(2m_2 - 4)!},$$ and if $j = m_2/2 - 3/2$, then $C = {m_2 \choose j+1}\left({m_2 \choose j+1}-1\right)/2$.

The result requires the following estimate on the number of structures of a particular type of tree.

\begin{lemma}
\label{kis0treecountlemma}
If $i$ is odd, the number of labelled structres of $\mathfrak{A}_{\mathcal{E}_1+\mathcal{E}_3}$ on $i+1$ vertices is ${i+1 \choose (i-1)/2} \frac{(i-1)!}{2^{(i-1)/2}}$, and of $\mathfrak{A}_{\mathcal{E}_1+2\mathcal{E}_3}$ on $i+1$ vertices is ${i+1 \choose (i-1)/2} (i-1)!$.
\end{lemma}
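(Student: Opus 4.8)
The plan is to reduce the count to the classical enumeration of labelled trees with a prescribed degree sequence. First I would observe that, since $\mathcal{E}_1$ and $\mathcal{E}_3$ each have exactly one structure on a set of the appropriate size and no structure on any other set, an $\mathfrak{A}_{\mathcal{E}_1+\mathcal{E}_3}$-structure on a set $U$ is nothing more than an ordinary tree on $U$ in which every vertex has degree $1$ or $3$; the enrichment imposes no extra data. For $|U| = i+1$ with $i$ odd, a degree count pins down the shape: writing $L$ for the number of leaves and $I$ for the number of degree-$3$ vertices, $L+I = i+1$ and $L + 3I = 2i$, whence $I = (i-1)/2$ and $L = (i+3)/2$. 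This is precisely the point at which the parity hypothesis on $i$ is used, since it guarantees integrality of $(i-1)/2$.

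Next I would enumerate such trees. Choosing which $(i-1)/2$ of the $i+1$ vertices are the degree-$3$ vertices can be done in ${i+1 \choose (i-1)/2}$ ways. Given the resulting degree sequence $(d_v)$, which satisfies $\sum_v d_v = 2i = 2((i+1)-1)$, the number of labelled trees on $U$ realizing it is the multinomial coefficient $\frac{((i+1)-2)!}{\prod_v (d_v-1)!}$, as one sees from the Pr\"ufer bijection: a vertex $v$ occurs exactly $d_v-1$ times in the Pr\"ufer word of length $(i+1)-2 = i-1$. Each leaf contributes $(d_v-1)! = 0! = 1$ and each of the $(i-1)/2$ degree-$3$ vertices contributes $(d_v-1)! = 2! = 2$, so this count equals $\frac{(i-1)!}{2^{(i-1)/2}}$. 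Multiplying the two factors gives ${i+1 \choose (i-1)/2}\frac{(i-1)!}{2^{(i-1)/2}}$ structures of $\mathfrak{A}_{\mathcal{E}_1+\mathcal{E}_3}$, as claimed.

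Finally, for $\mathfrak{A}_{\mathcal{E}_1+2\mathcal{E}_3}$ I would note that the only change is that $2\mathcal{E}_3 = \mathcal{E}_3 + \mathcal{E}_3$ has two structures on a $3$-element set, and transport of species acts separately within each summand; thus a degree-$3$ vertex carries one of two ``colours'' on its neighbour set, this choice is invariant under every relabelling of those neighbours, and it is subject to no further constraint. Hence each of the $(i-1)/2$ degree-$3$ vertices independently contributes a factor of $2$, and the total becomes ${i+1 \choose (i-1)/2}\frac{(i-1)!}{2^{(i-1)/2}}\cdot 2^{(i-1)/2} = {i+1 \choose (i-1)/2}(i-1)!$. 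The computation itself is routine; the only step demanding genuine care is this last piece of species bookkeeping — confirming that the two $\mathcal{E}_3$ summands really do yield an unconstrained binary choice at each internal vertex and not something that transport of species could identify — along with the observation, already noted, that the oddness of $i$ is exactly what keeps the degree count and the exponent $(i-1)/2$ integral.
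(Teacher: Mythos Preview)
Your argument is correct. The paper's own proof is a one-line citation: it simply invokes Proposition~3.1.19 of Bergeron, Labelle, and Leroux, which gives a general formula for the number of labelled $R$-enriched trees. Your route is more elementary and fully self-contained: you unwind what an $(\mathcal{E}_1+\mathcal{E}_3)$-enrichment means, pin down the degree sequence by the handshake identity, and then count directly via the Pr\"ufer correspondence (equivalently, the multinomial formula $(n-2)!/\prod_v(d_v-1)!$ for trees with a prescribed degree sequence). The cited proposition is, in effect, a packaged version of exactly this computation, so the two approaches are not really different in substance; yours just makes the mechanism visible rather than delegating it to the reference. The species bookkeeping you flag for $2\mathcal{E}_3$ is handled correctly: the two copies of $\mathcal{E}_3$ are disjoint summands, transport of species respects the decomposition, and hence each degree-$3$ vertex carries an unconstrained binary label.
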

\begin{proof}
Apply Proposition 3.1.19 of Bergeron, Labelle, and Leroux \cite{species}.
\end{proof}

\proofof{Theorem \ref{kis0jsmallthm}}
We start by proving an upper bound on the sum of the values of all components of $\Delta_{m_1(n_1),m_2}$.  Let $G$ be a component of $\Delta_{m_1(n_1),m_2}$ with $t$ edges and positive value; by Lemma \ref{treevalue}, $G$ is a tree with at most $m_2-2j-1$ leaves.

Construct a colored graph $c(G)$ from $G$ as follows: $V(c(G))$ is the union of all edges of $G$ and vertices of $G$ of degree at least two; and $E(c(G))$ is given by vertex-edge containment.  Every defective vertex or edge in $G$ is colored red in $c(G)$.  If $v$ is a non-defective vertex of degree at least $3$, then $v$ is colored green in $c(G)$.  All other vertices of $c(G)$ are blue.

A \textit{segment} in $c(G)$ is a maximal path $(v_0, \ldots, v_i)$ such that for all $0 < i' < i$, $v_{i'}$ is a blue vertex with degree $2$.  Construct a new graph $c'(G)$ by replacing every segment $(v_0, \ldots, v_i)$ with a single edge $v_0v_i$, and label that edge by the number $i$ of edges it replaces in $c(G)$.

The number of edges of $c'(G)$ is at most $2m_2 - 4j - 5$.  To see this, observe that $G$ has $d$ defects and at most $m_2-2j-1-d$ leaves by Lemma \ref{treevalue}.  Every leaf of $c'(G)$ is a leaf of $G$.  Combining the facts that $\sum_{v \in V(c'(G))}\deg(v) = 2e(c'(G))$ and $\sum_{v \in V(c'(G))}(\deg(v)-2) = -2$, $e(c'(G)) \leq 2m_2-4j-5-2d+a-b$, where $a$ and $b$ are the number of vertices of degree $2$ and at least $4$ in $c'(G)$.  However, the only degree $2$ vertices in $c'(G)$ correspond to defects in $G$, and thus $a \leq d$.  Thus $c'(G)$ has at most $2m_2 - 4j  - 5$ edges.  Furthermore, this bound is attained only if $G$ has $m_2-2j-1$ leaves, no defects, no vertices of degree at least $4$, and no vertices of degree $3$ if $j>0$ by Lemma \ref{treevalue}.

If $c'(G)$ has $2m_2-4j-5$ edges, then $G$ has value $1$ and no edges that intersects four other edges, since this would give a vertex of degree at least $4$ in $c'(G)$.  Thus, if $c'(G)$ has $2m_2-4j-5$ edges, then $c'(G)$ has $m_2-2j-1$ leaves and $m_2-2j-3$ vertices of degree $3$.  If $j>0$, then by Lemma \ref{treevalue}, all vertices of $c'(G)$ are blue, and such trees, forgetting labels, may be described by the species $\mathfrak{A}_{\mathcal{E}_1+\mathcal{E}_3}$.  If $j=0$, then the degree $3$ vertices may be green or blue, and thus such trees are described by the species $\mathfrak{A}_{\mathcal{E}_1+2\mathcal{E}_3}$.

Given $c'(G)$ with $i=2m_2-4j-5 \geq 2$ edges and that $G$ has $t$ edges, there are $(1+o_1(t))t^{i-1}/((i-1)!\alpha)$ nonisomorphic labelings of the edges, where $\alpha$ is the cardinality of the automorphism group of $c'(G)$.  This is since that in most labelings, all labels are distinct, and the orbit of a labeling with distinct labels consists of $\alpha$ labelings. The number of  labelled graphs $c'(G)$ of a given isomorphism class and automorphism group of order $\alpha$ is $(i+1)!/\alpha$.  Hence the number of graphs $c(G)$ with $c'(G)$ having $m_2-2j-1$ leaves is $\gamma(1+o_1(t))t^{i-1}/((i-1)!(i+1)!)$, where $\gamma$ is the number of labeled specimens of $\mathfrak{A}_R$ as in Lemma \ref{kis0treecountlemma}.

If $c'(G)$ has fewer than $i=2m_2-4j-5$ edges, there are $o(t^{i-1})$ labelings of $c'(G)$.  Since the number of graphs $c'(G)$ that may arise is independent of $t$, the total number of graphs $c(G)$ is $\gamma(1+o_1(t))t^{i-1}/((i-1)!(i+1)!)$, of which almost all have $m_2-2j-1$ leaves and all segments of different lengths.

Given a graph $G'$, the number of components $G$ of $\Delta_{m_1(n_1),m_2}$ with positive value such that $c(G) = G'$ has an upper bound that depends only on $n_1$ and $m_2$.  $G$ is determined by $c(G)$ and the following: the labels of all defective vertices, the labels of all vertices that are contained in defective edges, the labels of all vertices contained in leaves of $G$, and the labels of all vertices contained in edges with at least $3$ vertices of degree at least $2$.  There are at most $m_2 - 2j - 1$ of each of these items in $G$, and each one may be determined in at most $2^{n_1m_2}$ ways, and thus there are at most $2^{4n_1m_2(m_2-2j-1)}$ components $G$ with positive value such that $c(G) = G'$.  Thus, there are $o(t^{2m_2-4j-6})$ positive-value components $G$ with $t$ edges such that either $G$ has at most $m_2-2j-2$ leaves or $c'(G)$ has two edges with the same label.  Adding over all $t$, there are $o(m_1^{\frac{2m_2-4j-5}{2m_2 - 4j - 4}})$ such components $G$.

Now we determine how many positive-value components $G$ with $t$ edges of $\Delta_{m_1(n_1),m_2}$ satisfy these two conditions: $c(G) = G'$ for a particular graph $G'$ with $m_2-2j-1$ leaves, and $c'(G)$ has distinct edge labels.  If $0 \leq j < m_2/2 - 3/2$, $G$ has no defects, no vertices of degree at least $3$ (if $j>0$), and $m_2-2j-3$ edges that intersect $3$ others.  Each leaf, since it is not defective, contains one vertex of every label $S$ with $|S| \geq m_2-j$ and exactly one vertex with a label $S$ with $|S| = m_2-j-1$.  There are ${m_2 \choose j+1}$ ways to select this label.  Each edges that intersects $3$ others, since it is not defective, contains a vertex of every label $S$ with $|S| \geq m_2-j$ except for one label $S$ with $|S| = m_2-j$.  There are ${m_2 \choose j}$ ways to choose this label.  The total number of such components is ${m_2 \choose j+1}^{m_2-2j-1}{m_2 \choose j}^{m_2-2j-3}$, and the distinct edge labels of $c'(G)$ ensure that each of these components are asymmetric.

If $j = m_2/2 - 3/2$, then $c(G)$ has $2$ leaves and is a path.  As before, the two leaves each contain a vertex of every label $S$ with $|S| \geq m_2-j$, together with one vertex each of labels $S$ and $S'$ respectively with $|S| = |S'| = m_2-j-1$.  All other edges contain exactly a vertex of each label $\tilde{S}$ with $|\tilde{S}| \geq m_2-j$.  By asymmetry, $S \neq S'$.  Thus there are ${m_2 \choose j+1}({m_2 \choose j+1}-1)/2$ asymmetric components $G$ with $c(G) = G'$.

We conclude that there are $(C+o(1))t^{2m_2-4j-6}$ components of $\Delta_{m_1(n_1),m_2}$ with $t$ edges and positive value, almost all of which have value $1$ and none with value exceeding $m_2-2j-2$.  Adding over all $$t < (1+o(1))\left(\frac{m_1(2m_2-4j-4)}{C}\right)^{\frac{1}{2m_2-4j-4}}$$ proves the result.

\endproof

\section{$k \geq 1$ and $j < \lfloor (m_2-1)/2 \rfloor$}
\label{othersection}

\proofof{Theorem \ref{otherthm}}
We start with the upper bound on $n_2$.  By Lemma \ref{treevalue}, every component of $\Delta_{m_1(n_1),m_2}$ has value at most $m_2-1$, and every component with positive value is a tree.  Suppose that the number of components of $\Delta_{m_1(n_1),m_2}$ on $i$ edges with positive value is at most $b^i$ for some $b$.  Then $\Delta_{m_1(n_1),m_2}$ has at most $\frac{b^{\lceil (\log_b(m_1))/2 \rceil +1}-1}{b-1}$ components with at most $\lceil \log_b (m_1) / 2 \rceil $ edges.  Thus, $\Delta_{m_1(n_1),m_2}$ has at most $\frac{b^{\lceil (\log_b(m_1))/2 \rceil +1}-1}{b-1} + \frac{m_1}{\lceil \log_b(m_1)/2 \rceil}$ components of positive value, each of which has value at most $m_2-1$.  This would prove the upper bound.

We now establish that there are at most $b^i$ components of positive value on $i$ edges for some $b$.  Every component $G$ of positive value is a tree.  Associate with $G$ a labelled tree $G'$ as follows.  The vertex set of $G'$ is the union of the edge set of $G$ and the set of vertices of $G$ with degree at least $2$.  The edges of $G'$ are given by inclusion in $G$.  If $v$ is a vertex of $G'$ that corresponds to a vertex of $G$, then $v$ is given the same label; thus, there are at most $2^{m_2}$ possible labels for $v$.  If $e$ is a vertex of $G'$ that corresponds to an edge of $G$, then $e$ is labelled in a way to encode the number and labels of degree $1$ vertices of $e$.  Thus $e$ can be labelled in at most $1 + 2^{m_1} + 2^{2m_1} + \cdots + 2^{n_2m_1}$ ways.  $G$ can be reconstructed to isomorphism from $G'$.

If $G$ has $i$ edges, then $G'$ has at most $2i-1$ vertices.  Thus the number of isomorphism classes of underlying unlabelled trees of $G'$ grows exponentially in $i$ \cite{otter}.  Since the number of possible labels of each vertex of $G'$ depends only on $n_1$ and $m_2$, the total number of trees $G'$, and thus components $G$, grows at most exponentially in $i$.

To prove the lower bound on $n_2$, we show that the number of components of $\Delta_{m_1(n_1),m_2}$ with $t$ edges does in fact grow exponentially in $t$.  Let $G$ be a tree without defects or vertices of degree at least $3$ such that every edge contains at least ${m_2 \choose 0} + \cdots + {m_2 \choose j}$ degree $1$ vertices.  Then $G$ contains $2|E(G)|-1$ vertices of weight $m_2$, $|E(G)|{m_2 \choose i}$ of weight $m_2-i$ for $1 \leq i \leq j$, and $|E(G)|k + 2$ of weight $m_2-j-1$.  Then $G$ has value $m_2-2j-2$.

To $G$ we may associate a vertex-labelled tree $G'$, with the vertices of $G'$ given by the edges of $G$, and the edges of $G'$ are given by intersection.  The label of a vertex of $G'$ encodes the labels of the degree $1$ vertices of the corresponding edge.  Suppose that such an edge $e$ intersects $i$ other edges.  Since $e$ contains a vertex of every label with of size at least $m_2-j$ and $k-i+2$ vertices of label of size $m_2-j-1$, there are $a_i := {{m_2 \choose j+1} \choose k-i+2}$ ways to label $e$ in $G'$.  Thus $G'$ may be regarded as a member of $\mathfrak{a}_{\sum_{i=1}^{k+2} a_i\mathcal{E}_i}$, and from every member of this species, one can reconstruct an asymmetric $2^{[m_2]}$-labelled $n_1$-uniform tree with positive value.  We show that $|(\mathfrak{a}_{\sum_{i=1}^{k+2} a_i\mathcal{E}_i})_t|$ grows exponentially in $t$ by exhibiting a subset of structures of exponential size.

Let $(v_0, \ldots, v_{\lfloor 2t/3 \rfloor})$ be a path.  Let $S$ be subset of size $t-1-\lfloor 2t/3 \rfloor$ of the integers from $3$ to $\lfloor 2t/3 \rfloor-2$ that includes $3$ and $\lfloor 2t/3 \rfloor-2$.  For every $i \in S$, let $u_i$ be a vertex with an edge $u_iv_i$.  Then the graph with vertices $(v_0, \ldots, v_{\lfloor 2t/3 \rfloor})$ and $u_i$ for each $i \in S$, and labels chosen arbitrarily, is an element of $\mathfrak{a}_{\sum_{i=1}^{k+2} a_i\mathcal{E}_i}$.  Two such graphs are nonisomorphic for different choices of $S$, and the number of choices of $S$ grows exponentially in $t$.

Say that there are $b^t$ trees of the maximum possible value of $m_2-2j-2$ on $t$ edges for some fixed $b$ and sufficiently large $t$.  Then $\Delta_{m_1(n_1),m_2}$ contains no component with more than $\lceil \log_b(m_1) \rceil$ edges for large $m_1$, except possibly the symmetry breaking ring, and $\Delta_{m_1(n_1),m_2}$ has at least $\frac{m_1-m_2\lceil \log_b(m_1) \rceil-\min_R}{\lceil \log_b(m_1)\rceil}$ components.  This proves the theorem.
\endproof

\section{$k=0$ and $j = \lfloor (m_2-1)/2 \rfloor$}
\label{kis0jlarge}

We consider Theorem \ref{kis0jlargethm} in three cases.

\begin{theorem}
Theorem \ref{kis0jlargethm} holds for even $m_2 \geq 4$.
\end{theorem}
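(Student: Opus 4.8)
The statement is the case $k=0$, $j=\lfloor (m_2-1)/2\rfloor=\tfrac{m_2}{2}-1$. Using the identity $\tfrac{m_2-i}{m_2}\binom{m_2}{i}=\binom{m_2-1}{i}$ and summing an odd row of Pascal's triangle to its middle, here $r=1+\sum_{i=0}^{m_2/2-1}\binom{m_2-1}{i}=1+2^{m_2-2}$, which is an integer. The plan is to prove both implications of Theorem \ref{maintheorem} with $\epsilon(m_1)=2^{m_2-1}$. For the direction ``$n_2>rm_1+2^{m_2-1}\Rightarrow$ no distinguishing partition'': by Corollary \ref{distparthypergraph} a distinguishing partition is encoded by an asymmetric $\tau'(P)$ with $m_1$ $n_1$-edges, and $n_2=w(\tau'(P))/m_2=rm_1+v(\tau'(P))/m_2$. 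Since $j=\lfloor(m_2-1)/2\rfloor$, Corollary \ref{valuecor}(1) gives $v(G)\le 0$ for every component $G$, so Lemma \ref{valuetotal} yields $v(\tau'(P))\le m_2 2^{m_2-1}$, whence $n_2\le rm_1+2^{m_2-1}$.

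For the converse, by the last corollary of Section \ref{modelconst} it suffices to construct a distinguishing partition for $n_2=rm_1+2^{m_2-1}$, i.e.\ an asymmetric $\tau(P)$ whose $\tau'(P)$ has $m_1$ $n_1$-edges and total weight $m_2rm_1+m_22^{m_2-1}$. This forces every component of $\tau'(P)$ to have value $0$ and forces exactly one degree-$0$ vertex of each of the $2^{m_2}$ labels. By Lemma \ref{treevalue} the only value-$0$ components are paths of $n_1$-edges (at least two edges) in which every internal (degree-$2$) vertex is labelled $[m_2]$, every edge carries among its degree-$1$ vertices exactly one vertex of each label of size $>m_2/2$, and each of the two leaf edges carries in addition exactly one degree-$1$ vertex whose label has size $m_2/2$; such a path is asymmetric precisely when its two ``leaf labels'' are distinct. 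Since every other ingredient (the $[m_2]$-labelled internal vertices, the size-$>m_2/2$ degree-$1$ vertices, the $2^{m_2}$ degree-$0$ vertices) is individually label-balanced, the hypergraph is $\tau'(P)$ for a genuine partition iff the multiset of all leaf labels is balanced. Taking all path lengths distinct, any automorphism $\sigma$ of $\tau(P)$ induces $\pi\in S_{m_2}$ on the $n_2$-edges; $\pi$ maps each path to itself (lengths distinct) and hence, by asymmetry of the path, setwise-stabilises each unordered pair of leaf labels; if the chosen pairs have trivial joint setwise stabiliser in $S_{m_2}$ then $\pi=\mathrm{id}$, so $\sigma$ fixes all $n_2$-edges and is an automorphism of the labelled hypergraph $\tau'(P)$, which is asymmetric (pairwise non-isomorphic asymmetric components, all degree-$0$ vertices distinctly labelled); thus $\sigma$ is trivial and Corollary \ref{distparthypergraph} gives a distinguishing partition.

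It remains to produce, for even $m_2\ge4$, finitely many size-$(m_2/2)$ subsets of $[m_2]$ grouped into unordered pairs of distinct sets whose pairs have trivial joint setwise stabiliser and whose union multiset is balanced, and to realise it with distinct path lengths summing to exactly $m_1$. For the rigidifying part I would use ``detector'' pairs $\{T,\,T\triangle\{a,b\}\}$ with $|T|=m_2/2$, $a\in T$, $b\notin T$: the setwise stabiliser of such a pair preserves the partition $\{a,b\}\sqcup(T\setminus\{a\})\sqcup([m_2]\setminus(T\cup\{b\}))$, hence lies in $S_{\{a,b\}}\times S_{T\setminus\{a\}}\times S_{[m_2]\setminus(T\cup\{b\})}$. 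Fixing $T=\{1,\dots,m_2/2\}$ and letting $\{a,b\}$ range over $\{1,m_2/2{+}1\},\dots,\{m_2/2,m_2/2{+}1\}$ forces any $\pi$ in the joint stabiliser to fix $1,\dots,m_2/2{+}1$ individually and to stabilise $\{m_2/2{+}2,\dots,m_2\}$; repeating with $T''=\{m_2/2{+}1,\dots,m_2\}$ and $\{a,b\}$ ranging over $\{m_2/2{+}2,1\},\dots,\{m_2,1\}$ then forces $\pi=\mathrm{id}$. This uses $m_2/2\ge2$ in an essential way: for $m_2=2$ the only pair of distinct size-$1$ sets is $\{\{1\},\{2\}\}$, with stabiliser all of $S_2$, and for odd $m_2$ the leaf labels would have to have size $(m_2-1)/2$, which admits no complement of the same size, and moreover no value-$0$ path exists (its value is $\le-1$) — this is exactly why $\epsilon$ drops by one in those cases. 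Having pinned the action, I would restore balance of the leaf-label multiset with a bounded number of additional pairs (a solvable integer condition, since further pairs only add coverage and the total is freely scalable), and pad with ``filler'' paths carrying complementary leaf-label pairs $\{F,\overline F\}$ (self-balanced, available since $\overline F$ also has size $m_2/2$ when $m_2$ is even), choosing all remaining lengths distinct and large so the edge total equals $m_1$ — possible once $m_1$ is large relative to $m_2$. Establishing the bounded rigidifying detector family for every even $m_2\ge4$ is the one genuinely delicate step; the weight/value bookkeeping and the length padding are routine given the earlier lemmas. Combined with the upper bound, this yields Theorem \ref{maintheorem} with $\epsilon(m_1)=2^{m_2-1}$ for even $m_2\ge4$.
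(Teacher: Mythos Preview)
Your upper bound is exactly the paper's (you cite Corollary \ref{valuecor}(1), the paper cites Lemma \ref{treevalue}, but these are equivalent here).  The construction, however, is genuinely different from the paper's, and considerably more elaborate.

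The paper uses exactly $m_2$ paths $G_1,\dots,G_{m_2}$ of distinct lengths summing to $m_1$, and assigns to $G_i$ the two leaf labels $\{i,i+1,\dots,i+m_2/2-1\}$ and $\{i,i-1,\dots,i-m_2/2+1\}$ (cyclic intervals of size $m_2/2$).  Balance is automatic because the whole configuration is invariant under the cyclic shift $\xi$.  The asymmetry argument is not via stabilisers of pairs: distinct lengths pin each $G_i$ setwise, and then the single observation that $i$ is the \emph{only} element lying in \emph{both} leaf labels of $G_i$ shows that the $n_2$-edge $X_i$ meets $G_i$ in strictly more vertices than any other $X_{i'}$ does, so the $n_2$-edges are fixed outright; once $\pi=\mathrm{id}$, the two leaf labels of each $G_i$ differ (this is where $m_2\ge 4$ is used), so each path is internally rigid.

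Your framework---force $\pi$ to lie in the joint setwise stabiliser of all leaf-label pairs, then choose detector pairs $\{T,T\triangle\{a,b\}\}$ so that this stabiliser is trivial---is correct, and your specific detector family does work.  What it costs you is the separate balance step (``a solvable integer condition''), which you only sketch; the cleanest way to discharge it is to close your detector family under all $m_2$ cyclic shifts, which keeps the stabiliser trivial and makes the multiset of leaf labels $\xi$-invariant and hence balanced, at the price of $O(m_2^2)$ paths rather than the paper's $m_2$.  Two small corrections: you want one degree-$0$ vertex of each \emph{nonempty} label (an isolated $\emptyset$-labelled vertex would correspond to an empty part of $P$); and your remark ``the only value-$0$ components are paths'' is true but not a consequence of Lemma \ref{treevalue} alone, since that lemma only treats trees---the non-tree case needs the extra observation that a defect-free value-$0$ cycle has full dihedral symmetry fixing all labels, hence cannot sit inside an asymmetric $\tau(P)$.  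None of this affects the validity of your construction, which only uses paths.
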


\proof
The upper bound on $n_2$ follows from Lemmas \ref{treevalue} and \ref{valuetotal}: when $j = (m_2-1)/2$, since every tree has at least $2$ leaves, no component has positive value.

We establish the that $n_2$ may be $rm_1+2^{m_2-1}$ by the following construction.  Let $\tau'(P)$ consist of components $G_1, \ldots, G_{m_2}$ of $n_1$-edges, such that $G_i$ consists of $t_i$ edges, and all the $t_i$ are distinct and sum to $m_1$. Say that $G_i$ contains edges $e_1, \ldots, e_{t_i}$ such that for all $1 \leq a < b \leq t_i$, $e_a$ and $e_b$ do not intersect unless $b = a+1$, in which case $e_a \cap e_b = \{v_a\}$.  Each $v_a$ is labelled $[m_2]$.  Each $e_a$ contains one vertex of each of label of size at least $m_2-j$.  In addition, $e_1$ and $e_{t_i}$ contain respective vertices $u_1$ and $u_2$ of labels $\{i,i+1, \ldots, i+j\}$ and $\{i, i-1, \ldots, i-j\}$, subscripts mod $m_2$.  In addition, $\tau'(P)$ contains one degree $0$ vertex of each nonempty label.

Now we show that $\tau(P)$ is asymmetric.  Since the $G_i$ have different numbers of edges, no automorphism permutes the components of $\tau'(P)$ nontrivially.  The only nontrivial automorphism of the edges of $G_i$ reverses the chain.  Given that an automorphism $\sigma$ fixes the $n_2$-edges of $\tau(P)$, the two leaf edges of $G_i$ cannot be interchanged, and thus are fixed and all edges of $G_i$ are fixed.  Thus every degree $2$ vertex in $\tau'(P)$ is fixed as well.  Finally, each degree $1$ vertex in an edge $e \in H_i$ has a different label, and thus all these vertices are fixed.

Finally, since $\sigma$ fixes each $G_i$ componentwise, and the $n_2$-edge $X_i$ intersects $G_i$ more than any other $n_2$-edge, $\sigma$ fixes each $n_2$-edge.
\endproof

\begin{theorem}
Theorem \ref{kis0jlargethm} holds when $m_2 = 2$.
\end{theorem}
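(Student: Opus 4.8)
The plan is to pass, via Corollary~\ref{distparthypergraph} and the encoding $\tau'$, to the purely hypergraph statement that an asymmetric vertex-labelled hypergraph $\tau'(P)$ with $m_1$ edges of size $n_1=3$ has weight at most $4m_1+2$, and that weight exactly $4m_1+2$ is attained. Indeed, $m_2=2$ forces $j=\lfloor(m_2-1)/2\rfloor=0$, $k=0$, $n_1=3$, $r=r_{3,2}=2$, and $2^{m_2-1}-1=1$, so $n_2=w(\tau'(P))/2$ and the asserted condition is precisely $n_2\le 2m_1+1$.

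For the upper bound, since $j=\lfloor(m_2-1)/2\rfloor$, Corollary~\ref{valuecor}(1) gives $v(G)\le 0$ for each component $G$, so Lemma~\ref{valuetotal} gives $w(\tau'(P))=v(\tau'(P))+rm_2m_1\le 4m_1+m_2 2^{m_2-1}=4m_1+4$. As $w(\tau'(P))=2n_2$ is even it suffices to exclude $w(\tau'(P))=4m_1+4$, which would force every component to have value exactly $0$ and the degree-$0$ vertices to be precisely one each of the three nonempty labels $\{1\},\{2\},\{1,2\}$. I would then pin down the value-$0$ components: if $G$ is a tree, Lemma~\ref{treevalue} together with Lemma~\ref{leafcount} forces $\mu(G)=0$, $d(G)=0$, $l(G)=2$, so $G$ is a ``chain'' of $3$-edges, and maximising weight (using $w_1=2$, $w_2=3$) forces all interior vertices and all but one degree-$1$ vertex of each leaf edge to be labelled $\{1,2\}$, with one further weight-$1$ degree-$1$ vertex in each leaf edge; a short check shows the only value-$0$ non-tree is a ``ring'' of $3$-edges with every vertex labelled $\{1,2\}$, which is not asymmetric and hence cannot occur. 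Asymmetry of a chain component then forces its two weight-$1$ leaf vertices to carry \emph{different} labels, one $\{1\}$ and one $\{2\}$ (otherwise reversing the chain is a label-preserving, hence $\tau(P)$-compatible, automorphism). Such a chain on a given number of edges is unique up to isomorphism, so all components have distinct sizes; but then the permutation $\sigma$ reversing every chain component and interchanging the degree-$0$ vertices labelled $\{1\}$ and $\{2\}$ (fixing the one labelled $\{1,2\}$) is an automorphism of $\tau(P)$ that swaps the two $n_2$-edges, a contradiction. Hence $w(\tau'(P))\le 4m_1+2$ and $n_2\le 2m_1+1$.

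For the matching construction I would take $\tau'(P)$ to consist of: a value-$0$ chain $C$ on $m_1-5$ edges of the type just described, with its two distinguished weight-$1$ leaf vertices labelled $\{1\}$ and $\{2\}$; a ``defective chain'' $H$ on $5$ edges obtained from a value-$0$ chain by relabelling the two interior vertices incident to one of its leaf edges as $\{1\}$ and $\{2\}$ respectively (so $d(H)=2$, $v(H)=-2$), again with leaf vertices labelled $\{1\}$ and $\{2\}$; and one degree-$0$ vertex of each of the labels $\{1\},\{2\},\{1,2\}$. A direct count gives $w(\tau'(P))=4(m_1-5)+(20-2)+4=4m_1+2$ and shows that the number of vertices whose label contains $1$ equals the number whose label contains $2$, so $\tau'(P)$ is $\tau'$ of a partition $P$ of $K_{m_1(3),2(2m_1+1)}$. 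To verify that $\tau(P)$ is asymmetric: $C$ and $H$ are each asymmetric and non-isomorphic ($v(C)=0\ne v(H)$), and the three degree-$0$ vertices have distinct labels, so any nontrivial automorphism must interchange the two $n_2$-edges; such an automorphism would send $H$ to a component whose relabelling under $1\leftrightarrow 2$ is isomorphic to $H$, and by value the only candidate is $H$ itself — but the two interior defects of $H$ were placed at non-mirror positions precisely so that relabelling $H$ under $1\leftrightarrow 2$ gives a hypergraph not isomorphic to $H$. (For the finitely many small $m_1$ for which $m_1-5$ is not an admissible chain size, one simply splits the bulk into two chains of distinct sizes; this is irrelevant for large $m_1$, and the upper bound above needed no size restriction.)

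The step I expect to be the real obstacle is the loss of the final additive constant, from $4m_1+4$ down to $4m_1+2$: the general estimates of Section~\ref{generallemmas} only deliver $n_2\le 2m_1+2$, and the new content is the rigidity statement that the extremal configuration (all components value $0$, all three nonempty degree-$0$ labels present) always admits the $n_2$-edge-swapping automorphism, together with the complementary construction of a small gadget $H$ that is \emph{not} invariant under exchanging the two colours. This colour-exchange asymmetry is exactly what the ``$X_i$ meets its own component more than any other $n_2$-edge'' argument supplied for $m_2\ge 3$, and here it must be engineered by hand.
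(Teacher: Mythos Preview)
Your proof is correct and follows essentially the paper's approach: the same analysis of value-$0$ components leading to the $n_2$-edge-swapping automorphism for the upper bound, and the same device of placing two weight-$1$ defects at non-mirror positions along a chain for the construction. The only cosmetic differences are that you eliminate non-tree value-$0$ components outright (as non-asymmetric rings) where the paper simply notes that, being entirely labelled $\{1,2\}$, they are fixed by the $n_2$-edge swap, and that your construction uses a separate short defective chain $H$ alongside a clean chain $C$ rather than the paper's single chain of $m_1$ edges carrying both the leaf labels $u_1,u_2$ and the interior defects $v_1,v_2$.
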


\begin{proof}
All components of $\tau'(P)$ have a a nonpositive value by Lemma \ref{treevalue}.  By Lemma \ref{treevalue}, if $G$ is a tree of $0$ value, then $G$ has two leaves and is a chain.  Furthermore, to assure asymmetry, the leaves of $G$ must contain vertices labelled $\{1\}$ and $\{2\}$ respectively.  Otherwise, if $G$ is a non-tree with value $0$, then since $r=2$, $G$ must have $t$ edges and $2t$ vertices, and every vertex must be labelled $[2]$.

We conclude that if $\tau'(P)$ has value $m_2 2^{m_2-1}$, then $\tau'(P)$ is a collection of chains, as described above; components in which every vertex is labelled $[2]$; and a degree $0$ vertex of every nonempty label.  But then $\tau(P)$ has a symmetry that results from reversing each chain and switching the $n_2$-edges.  Thus the upper bound on $n_2$ holds.

Now we prove the sufficiency of the bound by construction.  Let $\tau'(P)$ contain a chain with at least five edges, $e_1, \ldots, e_{m_1}$ such that $e_a$ and $e_b$ intersect only when $b = a+1$, and then $e_a \cap e_b = \{v_a\}$.  All vertices are labelled $[2]$ except for $v_1$ and $v_2$, which are labelled $\{1\}$ and $\{2\}$ respectively, and degree $1$ vertices $u_1$ and $u_2$ that are contained in each of the leaves, labelled $\{1\}$ and $\{2\}$ respectively.  Also, $\tau'(P)$ contains a degree $0$ vertex of each nonempty label.

We show that $\tau(P)$ is asymmetric.  The $n_2$-edges cannot be switched since no automorphism of $\tau'(P)$ moves $v_1$ to any other vertex of weight $1$.  Thus $v_1$ is a fixed point in $\tau(P)$, which fixes all edges of $\tau'(P)$.  Furthermore, the vertices in the leaves of $\tau'(P)$ are fixed since they are of different labels.
\end{proof}

\begin{theorem}
Theorem \ref{kis0jlargethm} holds for odd $m_2$.
\end{theorem}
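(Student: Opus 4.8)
\proof
The plan is to establish, for all sufficiently large $m_1$, both directions of Theorem \ref{maintheorem} with $r = r_{n_1,m_2}$ and $\epsilon(m_1) = 2^{m_2-1}-1$; by the Corollary at the end of Section \ref{modelconst} this reduces to showing (i) every distinguishing partition $P$ of $K_{m_1(n_1),m_2(n_2)}$ has $n_2 \le rm_1 + 2^{m_2-1}-1$, and (ii) there is a distinguishing partition attaining $n_2 = rm_1 + 2^{m_2-1}-1$. Since $k=0$ and $\frac{m_2-i}{m_2}\binom{m_2}{i} = \binom{m_2-1}{i}$, one has $r = 1 + \sum_{i=0}^{j}\binom{m_2-1}{i}$, and this is an integer because $2^{m_2-1} + \binom{m_2-1}{(m_2-1)/2}$ is even; hence $rm_1$ is an integer and the only integer in the interval $(rm_1+2^{m_2-1}-1,\,rm_1+2^{m_2-1}]$ is $rm_1+2^{m_2-1}$, so for (i) it is enough to rule out $n_2 = rm_1 + 2^{m_2-1}$. (We take $m_2 \ge 3$, so that $j = (m_2-1)/2 \ge 1$; the degenerate case $m_2 = 1$ is handled directly in the spirit of the $m_2 = 2$ case.)

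For (i), write $n_2 m_2 = w(\tau'(P)) = \sum_G w(G) + W_0$ where $G$ runs over the components of $\tau'(P)$ and $W_0$ is the weight of the vertices lying in no $n_1$-edge, so $n_2 = rm_1 + \frac1{m_2}\bigl(\sum_G v(G) + W_0\bigr)$. By Lemma \ref{valuetotal}, $W_0 \le m_2 2^{m_2-1}$; since $j = \lfloor(m_2-1)/2\rfloor$, Corollary \ref{valuecor}(1) gives $v(G) \le 0$ for every component, and Lemma \ref{treevalue} gives $v(G) \le m_2 - 2j - 2 = -1$ whenever $G$ is a tree. Thus $n_2 \le rm_1 + 2^{m_2-1}$, with equality only if $\tau'(P)$ has exactly one degree-$0$ vertex of each nonempty label and every component has value exactly $0$. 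I would then argue that a value-$0$ component $G$ must be a ``uniform necklace'': it cannot be a tree, and retracing the inequalities behind Corollary \ref{valuecor}(1) (the bound $w_{\max}$ is strictly decreasing in $d$ and, as $j^*=j\ge 1$ here, strictly decreasing in $\mu$, and equality in Lemma \ref{weightbound} forces the degree-$1$ vertices to be spread evenly over the edges) forces $\mu(G) = d(G) = 0$; a connected $n_1$-uniform hypergraph that has a cycle but no vertex of degree $\ge 3$ is a cyclic sequence of edges $e_0,\dots,e_{q-1}$ with $q \ge 2$, $e_i\cap e_{i+1}$ a single weight-$m_2$ vertex, and since $n_1 - 2 = \sum_{i=0}^{j}\binom{m_2}{i}$ equals the number of subsets of $[m_2]$ of size at least $m_2-j$, the absence of defects forces each $e_i$ to carry exactly one degree-$1$ vertex of each such label. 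Then all edges of $G$ are identical, so the shift $e_i \mapsto e_{i+1}$ is a nontrivial, label-preserving automorphism of $G$; extending it by the identity elsewhere (including on every $n_2$-edge) gives a nontrivial automorphism of $\tau(P)$, because preserving labels preserves incidence with each $n_2$-edge $X_\ell$ (recall $v \in X_\ell \iff \ell \in$ label of $v$). This contradicts asymmetry of $\tau(P)$ and rules out $n_2 = rm_1 + 2^{m_2-1}$.

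For (ii), I would adapt the even-$m_2$ construction using chains of value $-1$. Pick distinct integers $2 \le t_1 < \dots < t_{m_2}$ with $\sum_\ell t_\ell = m_1$ and let $\tau'(P)$ be the disjoint union of chains $G_1,\dots,G_{m_2}$ together with one degree-$0$ vertex of each nonempty label, where $G_\ell$ is a chain of $t_\ell$ edges with every degree-$2$ vertex labelled $[m_2]$, every edge carrying one degree-$1$ vertex of each label of size at least $m_2-j$, and the two leaf edges of $G_\ell$ carrying, in addition, one degree-$1$ vertex of label $A_\ell = \{\ell+1,\dots,\ell+j\}$ and one of label $B_\ell = \{\ell-1,\dots,\ell-j\}$ respectively (indices mod $m_2$); here $|A_\ell| = |B_\ell| = j = m_2-j-1$, $A_\ell \ne B_\ell$ since $j < m_2-1$, and $A_\ell, B_\ell$ are disjoint with $A_\ell \cup B_\ell = [m_2]\setminus\{\ell\}$. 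A short count shows that for each $\ell' \in [m_2]$ the number of vertices whose label contains $\ell'$ is independent of $\ell'$, so $\tau'(P)$ is genuinely $\tau'$ of a regular partition $P$ of $K_{m_1(n_1),m_2(n_2)}$; by the tightness of Lemma \ref{treevalue}, $G_\ell$ has no defects and $v(G_\ell) = m_2 - 2j - 2 = -1$, hence $n_2 = rm_1 + \frac1{m_2}(-m_2 + m_2 2^{m_2-1}) = rm_1 + 2^{m_2-1}-1$. To see $\tau(P)$ is asymmetric, let $\sigma$ be an automorphism and $\pi$ its induced permutation of $[m_2]$ (equivalently of the $n_2$-edges). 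The chains have distinct lengths, so $\sigma$ maps each $G_\ell$ to itself; an automorphism of a chain either fixes all of its edges — in which case $\pi$ must fix the unique size-$j$ degree-$1$ label of each leaf, namely $A_\ell$ and $B_\ell$ — or reverses it — in which case $\pi(A_\ell)=B_\ell$ and $\pi(B_\ell)=A_\ell$; either way $\pi(A_\ell \cup B_\ell) = A_\ell \cup B_\ell$, i.e.\ $\pi$ fixes $[m_2]\setminus\{\ell\}$, and this holds for every $\ell$, so by complementation $\pi$ fixes every $\ell$ and is trivial. But then $\sigma$ preserves all labels, so it cannot reverse any $G_\ell$ (as $A_\ell \ne B_\ell$), hence fixes every $n_1$-edge, hence every degree-$2$ vertex, and finally every degree-$1$ and degree-$0$ vertex, since degree-$1$ vertices in a common edge and the degree-$0$ vertices all have distinct labels. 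Thus $\sigma$ is trivial.

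The step I expect to be hardest is the structural claim in (i): proving that a value-$0$ component must be a uniform necklace requires careful bookkeeping through the equality cases of Lemmas \ref{weightbound}--\ref{weightboundglobal} and Corollary \ref{valuecor}, with attention to small cases such as $q = 2$ (two edges meeting in two vertices). The construction in (ii) is mechanical once the leaf labels $A_\ell, B_\ell$ are chosen so as to simultaneously balance the label counts, distinguish the two leaves of each $G_\ell$, and --- crucially, using that $m_2$ is odd so that the sets $[m_2]\setminus\{\ell\}$ have trivial common stabilizer --- force the induced permutation $\pi$ to be trivial.
\endproof
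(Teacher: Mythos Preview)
Your proof is correct. The upper-bound argument is essentially the paper's: both show that a value-$0$ component must be a defect-free ``necklace'' (a cycle of edges, each with exactly $n_1-2$ degree-$1$ vertices carrying every label of size $\ge m_2-j$), which then admits a nontrivial label-preserving rotation and hence contradicts asymmetry of $\tau(P)$. The paper phrases the per-edge uniformity via the strict concavity $w_{n_1-1}+w_{n_1-3}<2w_{n_1-2}$, while you trace it through the equality cases of Lemmas~\ref{weightbound}--\ref{weightboundglobal}; these are the same idea.

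The lower-bound constructions genuinely differ. The paper uses a \emph{single} cyclic component on all $m_1$ edges and breaks symmetry by introducing exactly $m_2$ defective degree-$2$ vertices $v_i$ labelled $[m_2]\setminus\{i\}$ (placed at positions $1,\dots,m_2-1,m_2+1$ so no rotation or reflection can match them), giving total value $-m_2$. You instead take $m_2$ defect-free chains $G_\ell$ of distinct lengths, each of value $-1$, and encode the index $\ell$ into the pair of size-$j$ leaf labels $A_\ell,B_\ell$ with $A_\ell\cup B_\ell=[m_2]\setminus\{\ell\}$; the key observation that any automorphism must stabilise each $[m_2]\setminus\{\ell\}$, hence fix $\ell$, is clean and specific to odd $m_2$ (since $2(m_2-j-1)=m_2-1$ is what makes $A_\ell\sqcup B_\ell\sqcup\{\ell\}=[m_2]$ possible). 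Your construction is a natural continuation of the even-$m_2$ proof and makes the balancing of label-counts transparent; the paper's is more economical (one component, fewer moving parts) and illustrates that defects, not just leaf labels, can do the symmetry-breaking. One small remark: your parenthetical ``using that $m_2$ is odd so that the sets $[m_2]\setminus\{\ell\}$ have trivial common stabilizer'' slightly mislocates the role of oddness---those complements always have trivial common stabilizer; oddness is what allows $A_\ell,B_\ell$ of the required size to partition $[m_2]\setminus\{\ell\}$ in the first place.
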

\proof
By Lemma \ref{treevalue}, no component of $\tau'(P)$ has positive value.  We establish the upper bound on $n_2$ by showing that every connected component of $\tau'(P)$ has negative value.

Suppose that $G$ is a component with value $0$.  By Lemma \ref{weightbound}, $G$ contains $|E(G)|(n_1-1)$ vertices, of which $|E(G)|(n_1-2)$ have degree $1$.  Furthermore, every edge contains $n_1-2$ degree $1$ vertices since $w_{n_1-1} + w_{n_1-3} < 2w_{n_2-2}$.  Construct $G'$ by removing these degree $1$ vertices.  Then $G'$ is a ordinary, $2$-regular connected graph and thus a cycle.  Furthermore, $G$ lacks defects.  We conclude that $G$ has a nontrivial automorphism.

Now consider the following construction when $m_1 \geq m_2 + 3$.  Let $\tau'(P)$ contain a connected $n_1$-uniform hypergraph with edges $e_1, \ldots, e_{m_1}$, subscripts mod $m_1$, such that $e_a$ and $e_b$ do not intersect unless $|b-a| = 1$.  If $b = a+1$, then we say that $e_a \cap e_b = \{v_a\}$.  $\tau'(P)$ has no defects except for the following.  For $1 \leq i \leq m_2 - 1$, say that $v_i$ is labelled $[m_2]-\{i\}$, and $v_{m_2+1}$ is labelled $[m_2]-\{m_2\}$.  Also, $\tau'(P)$ contains a degree $0$ vertex of each nonempty label.  Then $\tau'(P)$ is asymmetric, and $n_2$ is the maximum value.
\endproof

\section{$k \geq 1$ and $j = \lfloor (m_2-1)/2 \rfloor$}
\label{k1jlarge}

We say $J$ that an $(\psi,s,t,n_1)$-regular hypergraph if $J$ satisfies the following conditions.  $J$ is $n_1$-uniform with $t$ edges, of which all edges intersect $s$ other edges with the exception of $\psi$ edges that each intersect $s-1$ other edges.  Furthermore, no two edges intersect at more than $1$ vertex, and every vertex has degree at most $2$.  Finally, every automorphism of $J$ fixes all edges.  Before we prove the main result of this section, we need some lemmas on the existence of regular hypergraphs.

\begin{lemma}
\label{graphexistence2}
Let $\psi$ and $s \geq 3$ be given, and suppose that $t$ is sufficiently large relative to $\psi$ and $s$.  Suppose that $st-\psi$ is even.  Then there exists an asymmetric graph with $t$ vertices such that all vertices have degree $s$, except for $\psi$ vertices that have degree $s-1$ .
\end{lemma}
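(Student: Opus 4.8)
The plan is to decouple the realization of the degree sequence from the elimination of symmetry. Write $d$ for the degree sequence with $t-\psi$ entries equal to $s$ and $\psi$ entries equal to $s-1$. The parity hypothesis that $st-\psi$ is even is precisely what the Erd\H{o}s--Gallai conditions demand of the nearly constant sequence $d$ once $t$ is large, so $d$ is graphical, and moreover one may take a \emph{connected} realization. A convenient explicit one, call it $G_0$, is a circulant on $\mathbb{Z}_t$ joining $i$ to $i\pm 1,\dots,i\pm\lfloor s/2\rfloor$ (plus an antipodal matching when $s$ is odd), followed by a bounded number of local edge modifications that realize the sequence $d$ exactly and preserve connectedness. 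This $G_0$ is very symmetric, but that is irrelevant; it serves only as a starting point with the correct degree sequence.

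The heart of the matter is to pass from $G_0$ to an \emph{asymmetric} graph with the same degree sequence $d$. The cleanest route is probabilistic. Let $G$ be uniform among all graphs on $[t]$ with degree sequence $d$. Since the degrees are bounded, the minimum degree is $s-1\ge 2$, and all but the $\psi=O(1)$ exceptional vertices have degree $\ge 3$, the standard configuration-model first-moment estimate (as used for random regular graphs, e.g.\ by Bollob\'{a}s and by McKay--Wormald for prescribed degrees) gives that the expected number of nonidentity automorphisms of $G$ tends to $0$: automorphisms of large support contribute a super-exponentially small amount, and automorphisms of bounded support are excluded because with high probability no two vertices of $G$ have isomorphic balls of radius $2$. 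Hence an asymmetric realization exists for all large $t$, proving the lemma.

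If a self-contained argument is preferred to citing random-graph results, I would instead build the asymmetric graph directly. Fix a small gadget $\Gamma$, a graph with four vertices of degree $s-1$ and all others of degree $s$, chosen so that it carries a \emph{signature} — for instance a path in $\Gamma$ along which chord lengths strictly increase — forcing any automorphism to fix $\Gamma$ vertexwise. Splice $\Gamma$ into $G_0$ by deleting two disjoint edges $ab,cd$ and joining the four degree-$(s-1)$ vertices of $\Gamma$ to $a,b,c,d$; this leaves the degree sequence unchanged. To upgrade ``fixes $\Gamma$'' to ``is the identity'', arrange that $G_0$ itself is a \emph{rigid skeleton}: a long path-like structure whose chord pattern is so varied that no two vertices have equal local neighborhoods, with the flexible part that absorbs the residual degree and parity constraints attached at a forced position. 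An automorphism then fixes $\Gamma$, hence fixes the skeleton vertex by vertex by propagation, hence is trivial.

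The step I expect to be the main obstacle, in either formulation, is ruling out automorphisms of \emph{small support} — a transposition or short product of transpositions interchanging two vertices with identical neighborhoods. In the probabilistic version this is the delicate part of the first-moment bound, handled by the with-high-probability statement that all balls of a fixed small radius in $G$ are pairwise non-isomorphic; in the constructive version it is exactly what dictates the ``all decorations distinct'' design of the skeleton. It is also where $s\ge 3$ is essential: for $s\le 2$ every graph is a disjoint union of paths and cycles and admits a nontrivial automorphism, so no such statement can hold.
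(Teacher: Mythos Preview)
Your probabilistic route is sound and would prove the lemma, but the paper's argument is lighter and worth knowing. Rather than invoking asymmetry of a uniform random graph with the \emph{full} near-regular degree sequence, the paper cites only the base case $\psi=0$ (a random $s$-regular graph on $t$ vertices is almost surely asymmetric and $s$-connected, by Kim--Sudakov--Vu and Bollob\'as) and then reduces the general case to it by two elementary constructions. When $st$ is even (hence $\psi$ even), split $t$ into $\psi/2$ distinct parts $t_1,\dots,t_{\psi/2}$, take for each an asymmetric $s$-connected $s$-regular graph on $t_i$ vertices, and delete one edge from each; the disjoint union has exactly $\psi$ vertices of degree $s-1$, each component remains asymmetric (any automorphism of $G_i$ minus an edge must fix the two low-degree vertices setwise and hence extends to an automorphism of $G_i$), and the distinct sizes prevent components from being permuted. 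When $st$ is odd, start from a $(\psi(s-1),s,t-\psi)$-asymmetric graph and attach $\psi$ new vertices, each joined to $s-1$ of the deficient vertices with pairwise disjoint neighbor sets.

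What each approach buys: yours handles the target degree sequence in one stroke, at the cost of needing the configuration-model asymmetry estimate in the nonregular setting and being careful about the $O(1)$ vertices of degree $s-1$ (which, when $s=3$, have degree $2$ and are exactly the sort of vertex where small-support automorphisms concentrate). The paper's reduction isolates all the probabilistic content in the cleanest possible case and does the rest by hand, which is shorter and avoids that delicacy entirely. Your constructive alternative via a spliced gadget and a ``rigid skeleton'' is plausible in outline but, as written, is not a proof: the description of the skeleton and the propagation argument would need to be made precise, and it is not obvious that the required flexibility in chord patterns survives the degree constraint.
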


Call a graph of this form an $(\psi,s,t)$-\textit{asymmetric} graph.

\begin{proof}
The lemma follows from the main theorem of \cite{randomregular} when $st$ is even and $\psi=0$, since an random $s$-regular graph is almost surely asymmetric.  Such a graph is also almost surely $s$-connected \cite{boll}.

Consider the case that $st$ is even.  Then $\psi$ is also even.  Choose distinct $t_1, \ldots, t_{\psi/2}$ such that $t_1 + \ldots + t_{\psi/2} = t$, with each $t_i$ even if $t$ is even.  For $1 \leq i \leq \psi/2$, let $G_i$ be an $s$-connected $(0,s,t_i)$-asymmetric graph with an edge removed.  Then the disjoint union of the $G_i$ is an $(\psi,s,t)$-asymmetric graph.

For odd $st$ and $\psi$, we may construct a $(\psi,s,t)$-asymmetric graph as follows.  Let $G'$ be an $(\psi(s-1),s,t-\psi)$-asymmetric graph.  Add new vertices $v_1, \ldots, v_{\psi}$ to $G'$, each with disjoint neighbor sets of size $s-1$ vertices of degree $s-1$ in $G'$.  The resulting graph is $(\psi,s,t)$-asymmetric.
\end{proof}

\begin{lemma}
\label{hgraphex}
Let $\psi$ and $s \geq 3$ be given, and suppose that $t$ is sufficiently large relative to $\psi$ and $s$.  Suppose that $st-\psi$ is even.  Also let $n_1 \geq s$ be given.  Then there exists an $(\psi,s,t,n_1)$-asymmetric hypergraph.
\end{lemma}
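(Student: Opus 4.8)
The plan is to build $J$ directly from the $(\psi,s,t)$-asymmetric graph $G$ supplied by Lemma \ref{graphexistence2}, using the vertices of $G$ as the edges of $J$ and the edges of $G$ as the degree-$2$ vertices of $J$. Since $st-\psi$ is even and $t$ is large, Lemma \ref{graphexistence2} provides an asymmetric graph $G$ on $t$ vertices in which every vertex has degree $s$ except for $\psi$ vertices of degree $s-1$. For each vertex $v \in V(G)$ introduce an edge $e_v$ of $J$; for each edge $uv \in E(G)$ introduce a single vertex $w_{uv}$ of $J$ lying in exactly $e_u$ and $e_v$; and finally, for each $v$, pad $e_v$ with $n_1 - \deg_G(v)$ new vertices of degree $1$, which is possible since $n_1 \geq s \geq \deg_G(v)$. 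This makes $J$ an $n_1$-uniform hypergraph with exactly $t$ edges.

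Next I would verify the combinatorial clauses of the definition. Two edges $e_u, e_v$ of $J$ meet precisely when the vertex $w_{uv}$ exists, i.e. when $uv \in E(G)$, and in that case they meet only in $w_{uv}$; hence no two edges of $J$ share more than one vertex, and $e_v$ intersects exactly $\deg_G(v)$ other edges — which is $s$ for all but $\psi$ of the edges and $s-1$ for the remaining $\psi$. Each $w_{uv}$ has degree $2$ and each padding vertex has degree $1$, so every vertex of $J$ has degree at most $2$. Thus $J$ meets every requirement of an $(\psi,s,t,n_1)$-regular hypergraph except possibly the condition that every automorphism fix all edges.

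For that last condition, let $\sigma$ be an automorphism of $J$. Since $\sigma$ permutes the edges and preserves incidence, it preserves the relation ``$e$ and $e'$ share a vertex''; defining $\bar\sigma(v) = u$ whenever $\sigma(e_v) = e_u$, the equivalence $uv \in E(G) \iff e_u \cap e_v \neq \emptyset$ shows that $\bar\sigma$ is an automorphism of $G$. As $G$ is asymmetric, $\bar\sigma$ is the identity, so $\sigma(e_v) = e_v$ for every $v$, which is exactly the required property (the automorphism $\sigma$ may still permute padding vertices within a fixed edge, but the definition only asks that edges be fixed). The whole argument is routine given Lemma \ref{graphexistence2}; the one point deserving a little care is the observation that the edge-intersection graph of $J$ is canonically $G$, which is what makes automorphisms of $J$ descend to automorphisms of $G$, and I do not anticipate any genuine obstacle beyond bookkeeping.
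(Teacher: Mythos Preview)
Your construction is exactly the one the paper uses: take an $(\psi,s,t)$-asymmetric graph $G$ from Lemma~\ref{graphexistence2}, make its vertices the hyperedges and its edges the degree-$2$ vertices, then pad with degree-$1$ vertices to reach uniformity $n_1$. Your write-up is in fact more careful than the paper's, which simply asserts that the result is $(\psi,s,t,n_1)$-asymmetric; your explicit check that automorphisms of $J$ descend to automorphisms of $G$ via the edge-intersection graph is the right way to justify the final clause.
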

\begin{proof}
Let $G$ be an $(\psi,s,t)$-asymmetric graph.  Let $H'$ be a hypgergraph with vertex set $E(G)$, edge set $V(G)$, and incidence given by indicent in $G$.  Then construct $H$ from $H'$ by adding $n_1-d$ degree $1$ vertices to every edge in $H'$ that contains $d$ vertices.  Then $H$ is $(\psi,s,t,n_1)$-asymmetric.
\end{proof}

\proofof{Theorem \ref{k1jlargethm}}
The upper bound on $n_2$ follows by Corollary \ref{valuecor} and \ref{valuetotal}, except when $km_1$ is even and $m_2$ odd.  In this case, suppose that all connected components of $\tau'(P)$ have value $0$, and $\tau'(P)$ contains a degree $0$ vertex of every nonempty label.  By Lemma \ref{weightbound} and the fact that $w_{n_1-k-1} + w_{n_1-k-3} < 2w_{n_1-k-2}$, every edge of $\tau'(P)$ contains exactly $n_1-k-2$ degree $1$ vertices.  Furthermore, $\tau'(P)$ does not have any defects.  In this case, $\tau(P)$ has an nontrivial automorphism that permutes the $n_2$-edges.  The upper bound on $n_2$ follows.

We establish the result by the following constructions.  First consider the case that $km_1$ and $m_2$ are both even.  Let the graph of $\tau'(P)$ be an $(m_2, k+2, m_1, n_1)$-asymmetric hypergraph, which exists by Lemma \ref{hgraphex}, together with a degree $0$ vertex of every nonempty label.  Choose the labels of the vertices of $\tau'(P)$ so that there are no defects, label the edges with $k+1$ degree $1$ vertices by $e_1, \ldots, e_{m_2}$, and say $e_i$ contains a vertex with label $\{i,i+1, \ldots, i+m_2/2-1\}$, subscripts mod $m_2$.  Then $\tau'(P)$ is asymmetric and satisfies $n_2 = rm_1 + 2^{m_2-1}$.

If $km_1$ is even and $m_2$ is odd, let $\tau'(P)$ be a $(0,k+2,m_1,n_1)$-asymmetric hypergraph, together with a degree $0$ vertex of every nonempty label.  Suppose that $\tau'(P)$ has exactly the following $m_2$ defects: for degree $2$ vertices $v_1, \ldots, v_{m_2}$, $v_i$ is labelled $[m_2]-\{i\}$.  Then $\tau(P)$ is asymmetric, and $n_2 = rm_1 + 2^{m_2-1} - 1$.

If $km_1$ is odd and $m_2$ is even, then let $\tau'(P)$ be an $(m_2+1,k+2,m_1,n_1)$-asymmetric hypergraph with $e_1, \ldots, e_{m_2+1}$ the edges that intersect $k+1$ other edges, together with a degree $0$ vertex of every nonempty label.  Choose the labels of the vertices of $\tau'(P)$ so that there are no defects, except that $e_{m_2+1}$ contains a degree $1$ vertex with every label of size at least $m_2-j$, together with a vertex labelled $\emptyset$.  For $1 \leq i \leq m_2$, $e_i$ contains a degree $1$ vertex labelled $\{i,i+1, \ldots, i+m_2/2-1\}$, subscripts mod $m_2$.  Then $\tau(P)$ is asymmetric and satisfies $n_2 = rm_1 + 2^{m_2-1} - 1/2$.

Finally, if $km_1$ and $m_2$ are both odd, then let $\tau'(P)$ be an $(m_2,k+2,m_1,n_1)$-asymmetric hypergraph, together with a degree $0$ vertex of every nonempty label.  Choose the labels of $\tau'(P)$ so that there are no defects, and if the edges with $k+1$ degree $1$ vertices are $e_1, \ldots, e_{m_2}$, then $e_i$ contains a vertex labelled $\{i,i+1,\ldots,i+m_2/2-1/2\}$, with subscripts mod $m_2$.  Then $\tau(P)$ is asymmetric and satisfies $n_2 = rm_1 + 2^{m_2-1} - 1/2$.
\endproof


\begin{thebibliography}{999}

\bibitem{albertsoncollins} M. Albertson, K. Collins, \textit{Symmetry breaking in graphs}, Electron. J. Combin. \textbf{3}, \#R18, 1996, (17 pp.).

\bibitem{baileycameron} R. Bailey, P. Cameron, \textit{Base size, metric dimension and other invariants of groups and graphs}, Bulletin of the London Mathematical Society, \textbf{43} (2011), 209-242.

\bibitem{species} F. Bergeron, G. Labelle, P. Leroux, \textit{Théorie des espèces et combinatoire des structures arborescentes}, LaCIM, Montréal 1994. English version: \textit{Combinatorial Species and Tree-like Structures}, Cambridge University Press 1998.

\bibitem{boll} B\'{e}la Bollob\'{a}s, Random Graphs, 2nd edition, Cambridge University Press (2001).  Section 7.6.

\bibitem{detset} D. Boutin, \textit{Identifying graph automorphisms using determining sets}, Electron. J. Combin., \textbf{13(1)}, \#R78, 2006, (12 pp.).

\bibitem{cost2} D. Boutin, \textit{Small Label Classes in 2-Distinguishing Labelings}, Ars Math. Contemp., \textbf{1(2)}, 2008.

\bibitem{distpart} M. Ellingham, J. Schroeder, \textit{Distinguishing partitions and asymmetric uniform hypergraphs}, Ars Math. Contemp., \textbf{4(1)}, 2011.

\bibitem{twentysteps} F. Harary, R. W. Robinson, A. J. Schwenk, \textit{Twenty-step algorithm for determining the asymptotic number of trees of various species}, J. Austral. Math. Soc., Series A, \textbf{20}, 1975, 483-503. Errata: Vol. A \textbf{41}, 1986, p. 325.

\bibitem{randomregular} J. H. Kim, B. Sudakov, V. Vu, \textit{On the asymmetry of random regular graphs and random graphs}, Random Structures Algorithms - Special issue: Proceedings of the tenth international conference "Random structures and algorithms", Volume 21 Issue 3-4, October 2002.

\bibitem{otter} R. Otter. \textit{The Number of Trees}, Ann. Math. \textbf{49}, (1948), 583-599.

\bibitem{tymoczko} J. Tymoczko, \textit{Distinguishing numbers for graphs and groups}, Electron. J. Combin. \textbf{11}, \#R63, 2004, (13 pp.).

\end{thebibliography}
\end{document}